\newcommand{\grp}[1]{{\mathbf{#1}}}
\newcommand{\C}{\mathbb{C}}
\newcommand{\Z}{\mathbb{Z}}
\newcommand{\Q}{\mathbb{Q}}
\newcommand{\R}{\mathbb{R}}
\renewcommand{\Re}{\operatorname{Re}}
\newcommand{\mult}{\mathbb{G}_m}
\newcommand{\add}{\mathbb{G}_a}
\newcommand{\inv}{\theta}
\newcommand{\roots}{\Sigma}
\newcommand{\Ad}{\operatorname{Ad}}
\newcommand{\dnull}{\Delta^G[\inv=-1]}
\newcommand{\dnotnull}{\Delta^G[\inv\ne-1]}
\newcommand{\proj}{{\mathbf{p}}}
\newcommand{\set}{\mathcal{S}}
\newcommand{\cone}{\mathcal{C}}
\newcommand{\ccone}{\overline{\mathcal{C}}}
\newcommand{\Exp}{\mathcal{E}}
\newcommand{\E}{\mathcal{E}}
\newcommand{\mc}{c}
\newcommand{\MC}{\mathcal{M}}
\newcommand{\Lie}{\operatorname{Lie}}
\newcommand{\Tr}{\operatorname{Tr}}
\newcommand{\Hom}{\operatorname{Hom}}
\renewcommand{\subset}{\subseteq}
\newcommand{\bs}{\backslash}
\newcommand{\diag}{\operatorname{diag}}
\newcommand{\vol}{\operatorname{vol}}
\newcommand{\GL}{\operatorname{GL}}
\renewcommand{\O}{\operatorname{O}}
\newcommand{\U}{\operatorname{U}}
\newcommand{\Sp}{\operatorname{Sp}}
\newcommand{\abs}[1]{\left|{#1}\right|}
\newcommand{\aaa}{\mathfrak{a}}
\newcommand{\Res}{\operatorname{Res}}
\newcommand{\h}{\mathcal{H}}
\newcommand{\sprod}[2]{\left\langle#1,#2\right\rangle}
\newcommand{\gl}{\mathfrak{gl}}
\newcommand{\pos}{>0}
\newcommand{\wpos}{\ge0}
\newcommand{\sm}[4]{{\bigl(\begin{smallmatrix}{#1}&{#2}\\{#3}&{#4}
\end{smallmatrix}\bigr)}}
\newtheorem{theorem}{Theorem}[section]
\newtheorem{lemma}[theorem]{Lemma}
\newtheorem{proposition}[theorem]{Proposition}
\newtheorem{remark}[theorem]{Remark}
\newtheorem{definition}[theorem]{Definition}
\newtheorem{corollary}[theorem]{Corollary}
\title[A criterion for integrability of matrix coefficients]{A criterion for integrability of matrix coefficients with respect to a symmetric space}
\author{Maxim Gurevich}
\address{Department of Mathematics, Technion -- Israel Institute of Technology , Haifa 3200003, Israel}
\email{max@tx.technion.ac.il}
\author{Omer Offen}
\address{Department of Mathematics, Technion -- Israel Institute of Technology , Haifa 3200003, Israel}
\email{offen@tx.technion.ac.il}
\date{\today}
\begin{document}

\setcounter{tocdepth}{1}

\begin{abstract}
Let $G$ be a reductive group and $\theta$ an involution on $G$, both defined over a $p$-adic field.
We provide a criterion for $G^\theta$-integrability of matrix coefficients of representations of $G$ in terms of their exponents along $\theta$-stable parabolic subgroups. 
%For a Galois involution our result implies that the matrix coefficient of a square integrable representation is also $G^\theta$-integrable. 
The group case reduces to Casselman's square-integrability criterion. As a consequence we assert that certain families of symmetric spaces are strongly tempered in the sense of Sakellaridis and Venkatesh. For some other families our result implies that matrix coefficients of all irreducible, discrete series representations are $G^\theta$-integrable. 
\end{abstract}

\maketitle
\tableofcontents
\section{Introduction}

Let $F$ be a $p$-adic field. Let $G$ be the group of $F$-points of a reductive $F$-group, $\inv$ an involution on $G$ and $H=G^\inv$ the subgroup of $\inv$-fixed points.
In this work we provide a criterion for $H$-integrability of matrix coefficients of admissible representations of $G$ in terms of their exponents along $\inv$-stable parabolic subgroups of $G$. In the group case ($G=H\times H$, $\inv(x,y)=(y,x)$) our result reduces to Casselman's square-integrability criterion \cite[Theorem 4.4.6]{CassNotes}. 

For a smooth representation $\pi$ of $G$, let $\Hom_H(\pi,\C)$ be the space of $H$-invariant linear forms on $\pi$. 
%By Frobenius reciprocity there is a natural isomorphism
%$\Hom_H(\pi,\C)\simeq\Hom_G(C_c^\infty(G/H),\tilde \pi)$ where $\tilde\pi$ is the contragredient of $\pi$. 
As apparent, for example, from the general treatment of \cite{MR1075727}, this space plays an essential role in the harmonic analysis of the space $G/H$. See also \cite{MR2401221} for the study of $H$-invariant linear forms on induced representations in the context of a $p$-adic symmetric space and \cite{1203.0039} in the more general setting of a spherical variety.

Furthermore, the understanding of $H$-invariant linear forms in the local setting has applications to the study of period integrals of automorphic forms. A conjecture of Ichino-Ikeda \cite{MR2585578} treats a different setting in which the pair $(G,H)$ is of the Gross-Prasad type. It claims, roughly speaking, that under appropriate assumptions, the Hermitian form on an irreducible, tempered, automorphic representation of $G$ associated to the absolute value squared of the $H$-period integral factorizes as a product of local $H$-integrals of the associated matrix coefficients. The conjectural framework of \cite{1203.0039} suggests a generalization of this phenomenon, which will include the symmetric case. (For an explicit factorization of a somewhat different nature see e.g. \cite{MR1816039, MR2930996}.)

Integrability of matrix coefficients provides an explicit construction of the local components of period integrals of automorphic forms. Factorizable period integrals, in turn, are intimately related with special values of $L$-functions and with Langlands functoriality conjectures.

The above global conjectures suggest to study the following purely local questions. Let $A_G$ be the maximal split torus in the centre of $G$ and $A_G^+$ the connected component of its intersection with $H$. Let $\pi$ be a smooth representation of $G$ and $\tilde v$ a smooth linear form in its contragredient $\tilde\pi$.
\begin{itemize}
\item Is the linear form
\[
\ell_{\tilde v,H}(v):=\int_{H/A_G^+} \tilde{v}(\pi(h)v)\ dh
\]
well defined on $\pi$ by an absolutely convergent integral?  
(When this is the case $\ell_{\tilde v,H}\in \Hom_H(\pi,\C)$.)
\item
Is it non-zero?
\end{itemize}
The answer we provide for the first question is a relative analogue of Casselman's criterion. We recall that, essentially, that criterion says that an admissible representation $\pi$ of $G$ is square-integrable if and only if all its exponents are positive.
The two main ingredients in its proof are:
\begin{enumerate}
\item The Cartan decomposition of $G$, which allows to test convergence of a $G$-integral by convergence of a series summed over a positive cone in the lattice associated with a maximal split torus in $G$.
\item Casselman's pairing, which is a tool to study the asymptotics of matrix coefficients in a positive enough cone in terms of its Jacquet modules along parabolic subgroups and eventually, in terms of the exponents of the representation.
\end{enumerate}
Similarly, testing $H$-integrability, can be put in terms of convergence of a series summed over a positive cone in a maximal split torus in $H$.
In order to apply the asymptotics of matrix coefficients of representations of $G$ one has to relate positivity of the cone in $H$ to positivity of relevant cones in $G$.
We achieve this by further studying a root system, introduced by Helminck-Wang, associated to a symmetric space $G/H$ \cite[Proposition 5.7]{MR1215304}. It is a root system containing that of $H$ that we refer to as the \emph{descendent root system}. A key ingredient in our proof is the relation, obtained in Corollary \ref{cor: cone}, between the two notions of positivity.
%observation that the positive Weyl chamber of $H$ is covered by the translates of the positive Weyl chamber of the descendent root system by an appropriate choice of coset representatives for the quotient of the relevant Weyl groups (Corollary \ref{cor: cone} \eqref{part: cone deco}).

In what follows we explicate our main result. 
Let $P_1$ be a minimal $\inv$-stable parabolic subgroup of $G$ and $P_0$ a minimal parabolic subgroup of $G$ contained in $P_1$. There exists a maximal split torus $A_0$ of $G$ in $P_0$ that is $\inv$-stable. Let $\aaa_0^*=X^*(A_0)\otimes_\Z \R$ where $X^*(A_0)$ is the lattice of $F$-characters of $A_0$. Then $\inv$ acts as an involution on $\aaa_0^*$ and gives rise to a decomposition 
\[
\aaa_0^*=(\aaa_0^*)^+_\inv\oplus (\aaa_0^*)^-_\inv
\]
where $(\aaa_0^*)^\pm_\inv$ is the $\pm 1$-eigenspace of $\inv$.

Let $P$ be a parabolic subgroup of $G$ containing $P_0$ (a standard parabolic subgroup) with a standard Levi decomposition $P=M\ltimes U$ and let $A_M$ be the maximal split torus in the centre of $M$. Then $\aaa_0$ admits a decomposition
\[
\aaa_0^*=\aaa_M^* \oplus (\aaa_0^M)^*
\]
where $\aaa_M^*=X^*(A_M)\otimes_\Z \R$ (see section \ref{notation} for details).
Assume that $P$ (and therefore also $M$) is $\inv$-stable. Then $\inv$-acts on $\aaa_M^*$ as an involution and decomposes it into the $\pm1$-eigenspaces 
\[
\aaa_M^*=(\aaa_M^*)_\inv^+\oplus(\aaa_M^*)_\inv^-.
\]
Let 
\[
\lambda\mapsto (\lambda_M)_\inv^+:\aaa_0^* \rightarrow (\aaa_M)_\inv^+ 
\]
be the projection to the first component with respect to the decomposition
\[
\aaa_0^*=(\aaa_M^*)_\inv^+\oplus(\aaa_M^*)_\inv^- \oplus (\aaa_0^M)^*.
\]
Let $A_M^+$ be the connected component of $A_M^\inv$. Then $(\aaa_M^*)_\inv^+\simeq X^*(A_M^+)\otimes_\Z \R$ and in particular $(\aaa_0^*)_\inv^+\simeq X^*(A_0^+)\otimes_\Z \R$. 

Let $\roots^G$ be the root system  of $G$ with respect to $A_0$ and let $\Delta$ be the set of simple roots determined by $P_0$.
Let $\Delta^{G/H}(M)$ be the set of non-zero restrictions to $A_M^+$ of the elements of $\Delta$. We say that $\lambda\in \aaa_0^*$ is $M$-relatively positive if $(\lambda_M)_\inv^+$ is a linear combination of the elements of $\Delta^{G/H}(M)$ with positive coefficients. 

There are two other root systems relevant to our main result. The root system $\roots^H$ of $H$ with respect to $A_0^+$ and the descendent root system $\roots^{G/H}$ which is the set of roots of $A_0^+$ in $\Lie(G)$. Let $W^H$ and $W^{G/H}$ be the associated Weyl groups. By definition, $\roots^H\subseteq \roots^G$ and this induces the imbedding $W^H\subseteq W^{G/H}$. In Corollary \ref{cor: cone}\eqref{part: cone deco} we define a particular set of representatives $[W^{G/H}/W^H]$ for the coset space $W^{G/H}/W^H$. 

Let $\rho_0^G\in \aaa_0^*$ be the usual half sum of positive roots in $\roots^G$ (summed with multiplicities). Note that similarly, $\rho_0^H\in (\aaa_0^*)_\inv^+$ and that $W^{G/H}$ acts on $(\aaa_0^*)_\inv^+$.
Our main result takes the following form.
\begin{theorem}\label{main intro}
Let $\pi$ be an admissible representation of $G$. Then every matrix coefficient of $\pi$ is $H$-integrable if and only if for every $\inv$-stable, standard parabolic subgroup $P=M\ltimes U$ of $G$, any exponent $\chi$ of $\pi$ along $P$ and any $w\in [W^{G/H} / W^H]$ we have that $\rho_0^G-2w\rho_0^H+\Re(\chi)$ is $M$-relatively positive. 
\end{theorem}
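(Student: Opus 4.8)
The plan is to carry over Casselman's two-ingredient strategy --- pass from an integral to a series over a cone by a Cartan decomposition, then control that series by the asymptotic expansion of matrix coefficients --- but to run it for $H$ in place of $G$, using the descendent root system $\roots^{G/H}$ to convert positivity for $H$ into positivity for $G$.

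\smallskip
\noindent\textbf{Step 1: from the $H$-integral to a series over a cone.} Fix a maximal compact $\K_H\subseteq H$ in good position relative to $A_0^+$ and let $P_0^H=M_0^H\ltimes U_0^H$ be the associated minimal parabolic. The Cartan decomposition of $H$ and the volume estimate $\vol(\K_H\exp(X)\K_H)\asymp q^{\langle 2\rho_0^H,X\rangle}$ on the relevant Weyl chamber show that $\int_{H/A_G^+}\abs{\tilde v(\pi(h)v)}\,dh$ is finite if and only if
\[
\sum_{X\in\ccone\cap\Lambda}q^{\langle 2\rho_0^H,X\rangle}\,\abs{\tilde v(\pi(\exp X)v)}<\infty ,
\]
where $\Lambda=X_*(A_0^+/A_G^+)$ and $\ccone\subseteq(\aaa_0)^+_\inv$ is the closed Weyl chamber of $\roots^H$ that serves as a fundamental domain and on which Casselman's asymptotic expansion is valid. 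As $\pi$ is admissible, matrix coefficients of $\K_H$-eigenvectors span, so it suffices to test the series on such $v,\tilde v$; and as Casselman's canonical pairing is non-degenerate, ``every matrix coefficient of $\pi$ is $H$-integrable'' amounts to convergence of the displayed series for every $\K_H$-eigenpair.

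\smallskip
\noindent\textbf{Step 2: slicing the cone and applying Casselman's asymptotics.} By Corollary \ref{cor: cone}\eqref{part: cone deco}, $\ccone=\bigsqcup_{w\in[W^{G/H}/W^H]}w^{-1}\ccone^{G/H}$ up to lower-dimensional pieces, where $\ccone^{G/H}$ is the Weyl chamber of $\roots^{G/H}$; and since $\roots^{G/H}$ is the restriction of $\roots^G$ to $A_0^+$, a $\inv$-fixed element of $\aaa_0$ is $\roots^{G/H}$-dominant iff it is $\roots^G$-dominant, so the faces of $\ccone^{G/H}$ are exactly those attached to the $\inv$-stable standard parabolics $P=M\ltimes U$ of $G$ --- automatically $\inv$-stable because a $\inv$-fixed $Y$ has $\inv P_Y=P_{\inv Y}=P_Y$. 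For $X$ in the cell of $w$ set $Y=wX\in\ccone^{G/H}$; picking $\dot w\in N_G(A_0^+)$ representing $w$, conjugation gives $\tilde v(\pi(\exp X)v)=\tilde v'(\pi(\exp Y)v')$ with $v'=\pi(\dot w)v$, $\tilde v'=\tilde\pi(\dot w)\tilde v$, and $q^{\langle 2\rho_0^H,X\rangle}=q^{\langle 2w\rho_0^H,Y\rangle}$. Now $Y$ is genuinely $\roots^G$-dominant, so as $Y\to\infty$ in the open face attached to a $\inv$-stable standard $P=M\ltimes U$, Casselman's expansion reads
\[
\tilde v'(\pi(\exp Y)v')=\sum_{\chi}p_\chi(Y)\,\chi(\exp Y)\,\modulus_P(\exp Y)^{1/2}+(\text{faster-decaying terms}),
\]
over the exponents $\chi$ of $\pi$ along $P$, with $p_\chi$ polynomial and $\abs{\chi(\exp Y)}\,\modulus_P(\exp Y)^{1/2}=q^{-\langle\rho_0^G+\Re(\chi),Y\rangle}$ modulo a functional vanishing on the face. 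Thus the $(w,P)$-part of the series has terms of size $q^{\langle 2w\rho_0^H-\rho_0^G-\Re(\chi),Y\rangle}$, and --- the polynomials being irrelevant to the exponential rate --- converges for all $v,\tilde v$ precisely when $\langle\rho_0^G-2w\rho_0^H+\Re(\chi),Y\rangle>0$ for every interior $Y$ of the face and every exponent $\chi$, i.e., when $\rho_0^G-2w\rho_0^H+\Re(\chi)$ is $M$-relatively positive (the face being the cone dual to $\Delta^{G/H}(M)$). Summing over the finitely many cells $w$ and faces $P$ gives the ``if'' direction; for ``only if'', if $M$-relative positivity fails for some $(P,\chi,w)$ one uses non-degeneracy of the canonical pairing to exhibit a $\K_H$-eigenpair for which that exponent genuinely occurs and dominates along a bad ray, forcing divergence.

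\smallskip
\noindent\textbf{Main obstacle.} The crux is the structural input of Corollary \ref{cor: cone}: that the Weyl-chamber decomposition of $\ccone$ matches the face structure of $\ccone^{G/H}$, that the representatives $[W^{G/H}/W^H]$ can be realized in $N_G(A_0^+)$ so the straightening $X\mapsto wX$ comes from a genuine conjugation, and that straightening converts the a priori non-standard $\inv$-stable parabolics into standard ones while turning $2\rho_0^H$ into $2w\rho_0^H$. A secondary difficulty is the uniformity over all matrix coefficients: on the ``if'' side, handling the lower-dimensional overlaps between cells and faces and the polynomial corrections; on the ``only if'' side, arguing via non-degeneracy of the canonical pairing that the offending exponent is not cancelled, possibly after perturbing the bad ray. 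The Cartan decomposition of $H$ with its volume asymptotics and the reduction to $\K_H$-finite test vectors are standard and used as black boxes.
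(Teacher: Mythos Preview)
Your proposal is correct and follows essentially the same route as the paper: reduce the $H$-integral to a series over the $\roots^H$-dominant cone via the Cartan decomposition of $H$ (the paper's Proposition~\ref{prop: H-int as sum}), break that cone into $w^{-1}$-translates of the $\roots^{G/H}$-dominant cone using Corollary~\ref{cor: cone}\eqref{part: cone deco}, conjugate by representatives in $N_G(A_0^+)$ so that the weight $2\rho_0^H$ becomes $2w\rho_0^H$, and then run Casselman's asymptotics on the $G/H$-cone (which sits in the $G$-dominant cone) to obtain the exponent condition. The paper packages the last step as a separate statement (Proposition~\ref{prop: conv sum H}) with an auxiliary weight $\omega$, and carries out the cone bookkeeping via an explicit lattice decomposition $S=\bigsqcup_J S_J(N)$ indexed by subsets $J\subseteq\Delta^{G/H}$ rather than your ``faces'' language, but the content is the same; likewise your ``only if'' sketch via non-degeneracy of the Casselman pairing is exactly what the paper does by choosing $v$ with $v_P$ a $\chi$-eigenvector and $\tilde v$ with $\langle v_P,\tilde v_{P^-}\rangle=1$.
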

For the definition of exponents of admissible representations see Section \ref{exp}. For the definition of $\Re(\chi)\in \aaa_M^*$ for a character $\chi$ of $A_M$ see \eqref{eq: def Re}.

Following Sakellaridis-Venkatesh \cite{1203.0039}, we say that $G/H$ is strongly tempered (resp.~strongly discrete) if matrix coefficients of irreducible, tempered (resp.~discrete series) representations of $G$ are all $H$-integrable.

Pairs of the Gross-Prasad type are strongly tempered by \cite{MR2585578} in the special orthogonal case and \cite{MR3159075} in the unitary case. As a consequence of the general criterion obtained in this work, we provide in section \ref{sect: special cases} examples of symmetric spaces that are strongly tempered or at least strongly discrete. We recapitulate the results here.

\begin{corollary}[of Theorem \ref{thm: main}]
Let $E/F$ be a quadratic extension and $J\in \GL_n(F)$ a symmetric matrix. 

In the following cases $G/H$ is strongly tempered:
\[
\begin{array}{| c | c |}
\hline G & H \\ \hline
\GL_n(F) & \O_J(F) \\ \hline

\U_{J, E/F}(F) &  \O_J(F) \\ \hline

\Sp_{2n}(F) & \U_{J, E/F}(F) \\ \hline
\GL_2(F) & \GL_1(F)\times \GL_1(F) \\ \hline

\end{array}
\]
Here $\O_J$ is the orthogonal group associated to $J$ and $\U_{J,E/F}$ the unitary group associated to $J$ and $E/F$.

In the following cases $G/H$ is strongly discrete:
\[
\begin{array}{| c | c |}
\hline G & H \\ \hline

G'(E) &  G'(F) \\ \hline
\GL_{2n}(F) & GL_n(E) \\ \hline
\GL_{2n}(F) & \GL_n(F)\times \GL_n(F) \\ \hline
\GL_{2n+1}(F) & \GL_n(F)\times \GL_{n+1}(F) \\ \hline

\end{array}
\]
Here $G'$ is any reductive group defined over $F$.

\end{corollary}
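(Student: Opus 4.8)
The plan is to deduce the corollary from Theorem~\ref{thm: main} together with Casselman's criterion, reducing each assertion to a finite, explicit inequality in the descendent root system.

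\emph{Step 1: reduction to a statement about $\rho_0^G-2w\rho_0^H$.} By Casselman's criterion, an irreducible $\pi$ is tempered (resp.\ a discrete series representation) exactly when, for every standard parabolic $P=MU$ and every exponent $\chi$ of $\pi$ along $P$, $\Re(\chi)$ lies in the closed (resp.\ open, modulo $\aaa_G^*$) cone spanned by $\{\alpha|_{A_M}:\alpha\in\Delta\setminus\Delta^M\}$. Fix a $\inv$-stable standard parabolic $P=M\ltimes U$ and $w\in[W^{G/H}/W^H]$. Because $P$ is $\inv$-stable, $\inv$ permutes the roots of $A_0$ occurring in $U$; using that $\{\alpha|_{\aaa_M}:\alpha\in\Delta\setminus\Delta^M\}$ is a basis of $\aaa_M^*$ and that $\inv$ carries each such $\alpha$ to a positive root in $U$, one checks that the projection $\lambda\mapsto(\lambda_M)^+_\inv$ maps the tempered cone into the closure of the cone of $M$-relatively positive vectors, and the square-integrable cone into its interior. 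Since that cone is open and convex, $\rho_0^G-2w\rho_0^H+\Re(\chi)$ is $M$-relatively positive for every tempered exponent $\chi$ as soon as $\rho_0^G-2w\rho_0^H$ is $M$-relatively positive, and for every discrete series exponent as soon as $\rho_0^G-2w\rho_0^H$ is $M$-relatively non-negative (the $\aaa_G^*$-component of $\Re(\chi)$ is harmless, being $W^{G/H}$-fixed and zero for representations with unitary central character). By Theorem~\ref{thm: main} it therefore suffices to show, for every $\inv$-stable standard parabolic $P=M\ltimes U$ and every $w\in[W^{G/H}/W^H]$, that $\rho_0^G-2w\rho_0^H$ is $M$-relatively positive (for the first table) resp.\ $M$-relatively non-negative (for the second).

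\emph{Step 2: the root-system computations.} For each pair one (i) determines the $\inv$-action on $A_0$, the torus $A_0^+$, the $\inv$-stable standard parabolics $P=M\ltimes U$ and the sets $\Delta^{G/H}(M)$; (ii) identifies $\roots^{G/H}$, the subsystem $\roots^H$ and the coset representatives $[W^{G/H}/W^H]$ from Corollary~\ref{cor: cone}; and (iii) computes the image of $\rho_0^G-2w\rho_0^H$ in $(\aaa_0^*)^+_\inv$ --- which, with the compatible positive systems, equals $\rho_0^{G/H}-2w\rho_0^H$, since $\rho_0^G$ restricts to $A_0^+$ as the half-sum $\rho_0^{G/H}$ of the positive descendent roots counted with multiplicity --- then restricts it to each $A_M^+$ and reads off the signs of the coefficients relative to $\Delta^{G/H}(M)$. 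For the first table $[W^{G/H}/W^H]$ is small and the resulting vector is strictly positive for every $w$ (for $\GL_n/\O_J$ with $\O_J$ split the $\inv$-stable standard parabolics correspond to palindromic compositions of $n$, $\roots^{G/H}$ has rank the Witt index $m$ of $J$, $\rho_0^{G/H}-2\rho_0^H$ is a strictly positive combination of the simple descendent roots --- explicitly $(1,\dots,1)$ when $n$ is even --- and the non-trivial coset representative fixes $\rho_0^H$; the three remaining entries are analogous Galois-type computations). For the second table the same vector meets the boundary of the cone for at least one $(P,w)$: in the Galois case $G'(E)/G'(F)$ the descendent system is $\roots^H$ with all multiplicities doubled, so $[W^{G/H}/W^H]=\{e\}$ and $\rho_0^{G/H}-2\rho_0^H=0$; for $\GL_N/\GL_p\times\GL_q$ (and similarly $\GL_{2n}/\GL_n(E)$) one has $\roots^{G/H}$ of type $A_{N-1}$, all standard parabolics are $\inv$-stable, $[W^{G/H}/W^H]$ is indexed by the splittings of $\{1,\dots,N\}$ into an ordered pair of subsets of sizes $p$ and $q$, and for each such $w$ one checks that the partial sums of $\rho_0^{G/H}-2w\rho_0^H$ in the coordinate basis are non-negative, which (their total being zero) gives $M$-relative non-negativity for every $\inv$-stable $P$.

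\emph{Main obstacle.} The essential difficulty is Step 2 for the families that must be treated uniformly in the rank --- the Galois case $G'(E)/G'(F)$ for an arbitrary reductive $G'$ over $F$, and the families $\GL_N/\GL_p\times\GL_q$ and $\GL_{2n}/\GL_n(E)$ --- where no enumeration is available and one needs a uniform description of $\roots^{G/H}$, of the $\inv$-stable parabolics and of $[W^{G/H}/W^H]$, together with a uniform proof of the (non-)positivity inequality; for $\GL_N/\GL_p\times\GL_q$ this reduces to the combinatorial statement that, however one interleaves $\rho_0^{\GL_p}$ and $\rho_0^{\GL_q}$ against $\rho_0^{\GL_N}$, the resulting vector has non-negative partial sums. (Checking that strong temperedness genuinely fails for the second table is not needed for the corollary but confirms that the dichotomy between the two tables is sharp.)
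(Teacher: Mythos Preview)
Your plan follows the paper's route exactly: reduce via Theorem~\ref{thm: main} and Casselman's criterion to showing that the relative test characters $\rho^w_{G/H}=(\rho_0^G)_\inv^+-2w\rho_0^H$ are $M_1$-relatively positive (first table) or weakly positive (second table), and then verify this case by case. Your Step~1 is precisely Corollary~\ref{prop: rel chars} (combined with Corollary~\ref{cor: rel rest M}), and your Step~2 outlines the same computations carried out in Section~\ref{sect: special cases}.

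Two points are worth flagging. First, the paper streamlines all of Step~2 through a single device you do not mention: the trace formula $\rho^w_{G/H}=-\tfrac12\sum_{\alpha\in\roots^{G/H,>0}} m_{\inv,w^{-1}(\alpha)}\,\alpha$ with $m_{\inv,\alpha}=\Tr(\inv|_{L^G_\alpha})=2M^H_\alpha-M^G_\alpha$ (Proposition~\ref{prop: rel char as sum} and Lemma~\ref{lem: multip}). In most of the examples the roots $\beta\in\roots^G$ restricting to a given $\alpha$ are permuted freely by $\inv$, so $m_{\inv,\alpha}=0$ by Lemma~\ref{lem: multip}\eqref{part: trace vanish}; only a short list of $\alpha$'s contributes, and the positivity is then immediate. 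This replaces your direct computation of $\rho_0^{G/H}-2w\rho_0^H$ and makes the uniformity you worry about automatic. Second, for the acknowledged obstacle $\GL_{n_1+n_2}/\GL_{n_1}\times\GL_{n_2}$ the paper's argument (Lemma~\ref{lem: lin period pos}) is the convexity trick: writing $a_k^w=\tfrac{k(N-k)}{2}-d(w,k)$ with $d(w,k)=e_w(n_1-e_w)+(k-e_w)(n_2-(k-e_w))$, the inequality $a_k^w\ge 0$ is $2\phi(k/2)\le\phi(e_w)+\phi(k-e_w)$ for the convex function $\phi(t)=t(t-n_1)$ (and a slight variant when $n_2=n_1+1$). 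This is exactly the ``interleaving'' inequality you describe, and convexity is the missing one-line idea.

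One correction: your treatment of $\GL_{2n}(F)/\GL_n(E)$ is wrong. Here $A_0^+$ has rank $n$, $\roots^{G/H}=\roots^H$ is of type $A_{n-1}$ (not $A_{2n-1}$), the $\inv$-stable standard parabolics are only the $P_{(2,\dots,2)}$-type ones (not all), and $W^{G/H}=W^H$ so $[W^{G/H}/W^H]=\{e\}$. Every $\alpha\in\roots^{G/H}$ has four preimages in $\roots^G$ permuted without fixed points by $\inv$, hence $m_{\inv,\alpha}=0$ for all $\alpha$ and $\rho^e_{G/H}=0$; the case is easier than the $\GL_p\times\GL_q$ one, not analogous to it.
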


For real symmetric spaces it is shown in \cite{1211.1203} that weak positivity of $\rho_0^G-2\rho_0^H$ is equivalent to $L^2(G/H)$ being tempered.
It will be interesting to study the relation between temperedness of $L^2(G/H)$ and the above properties, strongly tempered/discrete, in the $p$-adic case.

When $G$ is split over $F$, Sakellaridis and Venkatesh show in  \cite{1203.0039} that if $G/H$ is strongly tempered then all $H$-invariant linear forms of an irreducible, square-integrable representation $\pi$ of $G$ emerge as $H$-integrals of matrix coefficients, i.e., 
\[
\Hom_H(\pi,\C)=\{\ell_{\tilde v,H}:\tilde{v}\in \tilde \pi\}. 
\]
We apply this result in section \ref{sect: non-vanish} to some examples of symmetric spaces that are strongly tempered by our criterion. This expands on some similar recently obtained results. Pairs of Gross-Prasad type are strongly tempered and of multiplicity one. For those cases, the non-vanishing of $H$-integrals of matrix coefficients was obtained in \cite[Proposition 5.6]{MR3202558} and \cite[Theorem 14.3.1]{1205.2987}.
For irreducible cuspidal representations it is shown in \cite{1410.8274} for all symmetric spaces that all $H$-invariant linear forms emerge as $H$-integrals of matrix coefficients.
This is a generalization of \cite[\S 5]{MR1816039}.

The paper is organized as follows. After setting our notation in section \ref{notation}, we recall in section \ref{prelim} some basic facts about symmetric spaces.
In particular we recall the definition of the descendent root system associated to a symmetric space $G/H$ by Helminck and Wang and prove some relations with the root systems of $G$ and of $H$ that are relevant to the rest of this work. In section \ref{Hint} we prove our main result, a criterion for $H$-integrability of matrix coefficients. 
In section \ref{sect: special cases} we provide examples of strongly tempered/discrete symmetric spaces based on our main result. In section \ref{sect: non-vanish} we apply results of Sakellaridis and Venkatesh to provide examples where $H$-invariant linear forms emerge as integrals of matrix coefficients.

\subsection*{Acknowledgements}
We thank Yiannis Sakellaridis for sharing his insights on strongly tempered spaces.

%Thus, Theorem \ref{thm: non-vanish} gives new positive answers on the non-vanishing question for certain symmetric families.

%We apply our criterion, in order to show some examples of strongly tempered symmetric spaces such as $\GL(n)/O(n)$...In the case of a Galois symmetric space our criterion implies that for every square integrable admissible representation matrix coefficients are $H$-integrable.

\section{Notation}\label{notation}
Let $F$ be a $p$-adic field.
In general, if $\grp X$ is an algebraic variety defined over $F$ (an $F$-variety) we write $X=\grp X(F)$ for its $F$-points.

Let $\grp G$ be an algebraic $F$-group and $\grp{A_G}$ the maximal $F$-split torus in the centre of $\grp G$. We denote by $X^*(G)$ the group of $F$-rational characters of $\grp G$.
Let $\aaa_G^*=X^*(G)\otimes_{\Z}\R$ and let $\aaa_G=\Hom_{\R}(\aaa_G^*,\R)$ be its dual vector space with the natural pairing
$\sprod{\cdot}{\cdot}=\sprod{\cdot}{\cdot}_G$. We have $\aaa_G^*=\aaa_{A_G}^*$.
%We denote by $\aaa_\C$ the complexification of a real vector space $\aaa$. The pairing between $\aaa_{G,\C}^*$ and $\aaa_{G,\C}$ will still be denoted by $\sprod{\cdot}{\cdot}_G$.

To $\lambda\otimes a\in \aaa_G^*$ we associate the character $g\mapsto \abs{\lambda(g)}^a$ of $G$. This extends to a bijection from $\aaa_G^*$
to the group of positive continuous characters of $G$. We denote by $\Re(\chi)\in \aaa_G^*$ the pre-image of a positive character $\chi:G\rightarrow \R_{>0}$.
If $\chi:G\rightarrow \C^*$ is any continuous homomorphism then we set 
\begin{equation}\label{eq: def Re}
\Re(\chi)=\Re(\abs{\chi}).
\end{equation}

Let $X_*(G)$ be the set of one parameter subgroups of $G$ (i.e., $F$-homomorphisms $\mult\rightarrow \grp G$). For an $F$-torus $\grp T$, $X_*(T)$ is a free abelian group of finite rank. The natural pairing of $X_*(T)$ with $X^*(T)$ allows us to identify $\aaa_T$ with $X_*(T)\otimes_\Z\R$.

%Let
%\[
%G^1=\{g\in G:\forall \chi\in X^*(G),\,\abs{\chi(g)}_F=1\}
%\]
%There is an injective homomorphism
%\[
%H_G:G^1 \bs G \rightarrow \aaa_G
%\]
%such that $q^{-\sprod{\chi}{H_G(g)}}=\abs{\chi(g)}_F$, $\chi\in X^*(G)$, $g\in G$.
%and $X(G)=\Hom(G^1\bs G,\C^*)$ the group of unramified characters of $G$. The map $\chi\mapsto \abs{\chi}_F$, $\chi\in X^*(G)$ extends to a surjective homomorphism $\aaa_{G,\C}^*\rightarrow X(G)$.
%Its kernel is 
%\[
%\frac{2\pi i}{\log q} R\text{\ \ \  where \ \ \ }R=\{\lambda\in \aaa_G^*: \forall \,g\in G,\ \sprod{\lambda}{H_G(g)}\in \Z\}\subseteq X^*(G)\otimes_\Z\Q.
%\]
%In particular, for $\chi\in X(G)$ the element $\Re(\lambda)\in \aaa_G^*$ is independent of $\lambda\in \aaa_{G,\C}^*$ that maps to $\chi$ and we denote it by $\Re(\chi)$.

Let $\delta_G$ be the modulus function of $G$\footnote{ Our convention will be that if $dg$ is a left-invariant Haar measure, then $\delta_G(g) dg$ is a right-invariant Haar measure. This is opposite to $\Delta_G$ in the convention of Bourbaki.}. 
%and let  
%\[
%\delta_G(g)=q^{-\sprod{2\rho_G}{H_G(g)}}
%\]
%where 
%\[
%\rho_0^G=\Re(\delta_G^{1/2})\in \aaa_G^*.
%\]

From now on assume that $\grp G$ is a connected reductive group. Let $\grp A_G$ be the maximal $F$-split torus in the centre of $\grp G$. 

Let $\grp P_0=\grp M_0\ltimes \grp U_0$ be a minimal parabolic $F$-subgroup of $\grp G$ with Levi component $\grp M_0$ and unipotent radical $\grp U_0$.
Set $\grp A_0=A_{M_0}$, $\aaa_0=\aaa_{M_0}$ and $\aaa_0^*=\aaa_{M_0}^*$. Then $\grp A_0$ is a maximal $F$-split torus in $\grp G$. 

%By the Iwasawa decomposition $G=P_0 K=U_0 M_0 K$ we extend $H_0$ to a map
%$H_0:G\rightarrow \aaa_0$ defined by
%\[
%H_0(umk)=H_0(m), \ \ \ u\in U_0,\,m\in M_0,\,k\in K.
%\]
 
A parabolic $F$-subgroup $\grp P$ of $\grp G$ is called semi-standard if it contains $\grp A_0$, and standard if it contains $\grp P_0$. If $\grp P$ is semi-standard, it admits a unique Levi subgroup $\grp M$ containing $\grp A_0$. We will say that $\grp M$ is a semi-standard Levi subgroup of $\grp G$. When we write that $\grp P =\grp M\ltimes \grp U$ is a semi-standard parabolic $F$-subgroup of $\grp G$, we will mean that $\grp M$ is the unique semi-standard Levi subgroup of $\grp P$ and $\grp U$ is the unipotent radical of $\grp P$.

The space $\aaa_M$ can be identified with $\aaa_{A_M}$, and in particular can be viewed as a subspace of $\aaa_0= \aaa_{A_0}$ with a canonical decomposition
 \[
 \aaa_0=\aaa_M \oplus \aaa_0^M.
 \] 
 %Note that $H_M$ extends to a function $H_M:G\rightarrow \aaa_M$, the composition of $H_0$ with projection to $\aaa_M$.
More generally, if $\grp Q=\grp L \ltimes \grp V$ is another semi-standard $F$-parabolic subgroup of $\grp G$ containing $\grp P$ then $\aaa_L$ is a subspace of $\aaa_M$ and there is a canonical decomposition
\[
\aaa_M=\aaa_L \oplus \aaa_M^L.
\]
Similar decompositions apply to the dual spaces. 
%We denote by $H_M^L:G\rightarrow \aaa_M^L$ the composition of $H_M$ with the projection to $\aaa_M^L$ and set $H_0^L=H_{M_0}^L$. 
For $\lambda\in \aaa_0^*$ we denote by $\lambda_M$ its projection to $\aaa_M^*$ and by $\lambda_M^L$ its projection to $(\aaa_M^L)^*$.

Let $\grp T$ be an $F$-split torus in $\grp G$.
If $0\ne v\in \Lie(G)$ and $0\ne \alpha\in X^*(T)$ are such that $\Ad(t)v=\alpha(t)v$, $t\in \grp T$ then we say that $\alpha$ is a root of $G$ with respect to $T$ and $v$ is a root vector with root $\alpha$. Let $R(T,G)$ be the set of all roots of $G$ with respect to $T$.

Let $\roots=\roots^G=R(A_0,G)$. It is a subset of $X^*(A_0)$ that spans $(\aaa_0^G)^*$ and forms  a root system. Let $\roots^{\pos}=\roots^{G,\pos}=R(A_0,P_0)$ be the set of positive roots and $\Delta=\Delta^G$ the basis of simple roots with respect to $P_0$. 
%For $\alpha\in \roots$ we denote by $\alpha^\vee$ the corresponding coroot. 
Let $W^G$ denote the Weyl group of $\roots^G$.
For a standard parabolic subgroup $\grp P=\grp M\ltimes \grp U$ of $\grp G$ let $\Delta^M=\Delta\cap \roots^M$ be the set of simple roots of $M$ with respect to $M\cap P_0$.
Furthermore, let
\[
\Delta_M=\{\alpha|_{A_M}:\alpha\in \Delta^G\}\setminus \{0\}.
\]

For $\lambda\in X_*(G)$ we associate a parabolic $F$-subgroup $\grp P(\lambda)=\grp{P_G}(\lambda)$ as in \cite[\S 15.1]{MR2458469}.
It is defined as the set of points $x\in \grp G$ so that the map $a\mapsto \lambda(a)x\lambda(a)^{-1}:\mult\rightarrow \grp G$ extends to an $F$-rational map 
$\add\rightarrow \grp G$. (Here we view the multiplicative group $\mult$ as a subvariety of the additive group $\add$.) It naturally comes with a Levi decomposition $\grp P(\lambda)=\grp M(\lambda) \ltimes \grp U(\lambda)$ where the Levi component $\grp M(\lambda)$ is the centralizer of the image of $\lambda$ and the unipotent radical consists of the elements $x$ where the above extended map sends $0$ to the identity in $\grp G$. The group $\grp P(-\lambda)$ is the parabolic subgroup of $\grp G$ opposite to $\grp P(\lambda)$ so that $\grp P(\lambda)\cap \grp P(-\lambda)=\grp M(\lambda)$.
Every parabolic $F$-subgroup of $\grp G$ is of the form $\grp P(\lambda)$ for some $\lambda\in X_*(G)$ (see \cite[Lemma 15.1.2]{MR2458469}).

Furthermore, every semi-standard parabolic $F$-subgroup of $\grp G$ is of the form $\grp P(\lambda)$ where $\lambda\in X_*(A_0)$.
(In fact, semi-standard parabolic $F$-subgroups of $\grp G$ are in bijection with facets of $\aaa_0$ with respect to root hyperplanes associated to $\roots$.)

For a subset $I\subseteq \Delta$ let $\lambda_I\in X_*(A_0)$ be such that $\sprod{\alpha}{\lambda_I}=0$ for all $\alpha\in I$ and $\sprod{\alpha}{\lambda_I}>0$ for all $\alpha\in \Delta\setminus I$. Then $\grp {P_I}:=\grp P(\lambda_I)$ is a standard parabolic $F$-subgroup of $\grp G$. In fact, $\grp{P_I}$ is independent of a choice of $\lambda_I$ as above and $I\mapsto \grp{P_I}$ is an order preserving bijection between subsets of $\Delta$ and standard parabolic $F$-subgroups of $\grp G$. We denote by $\grp{P_I}=\grp{M_I}\ltimes\grp{U_I}$ the associated Levi decomposition and let $\grp{A_I}=\grp{A_{M_I}}$. Then $A_I$ is the connected component of $\cap_{\alpha\in I} \ker \alpha \subset A_0$ and $\Delta^{M_I}=I$.
Note that $P_\emptyset=P_0$ and $P_{\Delta}=G$.

\subsection{Cones} Let $\grp T$ be an $F$-split torus. For a subset $\set\subseteq X^*(T)$ let 
\[
\aaa_T^{\set,\pos}=\{x\in \aaa_T: \sprod{\alpha}{x}>0,\ \alpha\in \set\},
\] 
$\aaa_T^{\set,\wpos}$ be its closure and 
\[
X_*(T)^{\set,\wpos}=X_*(T)\cap \aaa_T^{\set,\wpos}. 
\]
Also let 
\[
\cone(T,\set)=\{\sum_{\alpha\in \set} a_\alpha \alpha: a_\alpha\in \R_{>0},\ \alpha\in \set\}
\]
and let $\ccone(T,\set)$ be its closure.
%We set $\aaa_0^+=\aaa_0^{\Delta^G,+}$, ${}^+\aaa_0^*={}^{+,\Delta^G}\aaa_0^*$ and similarly for the closures. 

Fix a uniformizer $\varpi$ of $F$ once and for all. Then, $X_*(T)$ can be embedded in $T$ by $x\mapsto x(\varpi)$. We denote the image of this embedding by $C_T$. Then $T/C_T$ is compact. Let 
$C_T^{\set,\wpos}$ be the image of $X_*(T)^{\set,\wpos}$ in $C_T$. 

Let $\grp P=\grp M \ltimes\grp U$ be a standard parabolic $F$-subgroup of $\grp G$.
For $\epsilon>0$ let 
\[
C_{A_M}^{\pos}(\epsilon)=\{a\in C_{A_M}: \abs{\alpha(a)}_F<\epsilon,\, \alpha\in \Delta_M\}.
\]
Note that if $\epsilon \le 1$ then $C_{A_M}^{\pos}(\epsilon)\subseteq C_{A_M}^{\Delta_M,\wpos}$.

\subsection{Cartan decomposition}
Let 
\[
C_0^{\wpos}=C_{A_0}^{\Delta^G,\wpos} 
\]
and fix a maximal compact subgroup $K=K^G$ of $G$ `adapt\`e \'a $A_0$' in the terminology of \cite[\S V.5.1]{MR2567785}.
 By our choice of $K$ (see \cite[Theorem V.5.1(4)]{MR2567785}) there exists a finite set $F_0$ in $M_0$ such that 
\[
G=\bigsqcup_{c\in C_0^{\wpos}} \bigsqcup_{f\in F_0} KfcK.
\]
Fix a Haar measure on $G$ and let $\vol(X)$ denote the measure of a subset $X$ of $G$. Choosing the set $F_0$ as in \cite[Theorem V.3.21]{MR2567785} the following follows from  \cite[Theorem V.5.2]{MR2567785} and the proof of  \cite[Theorem VII.1.2]{MR2567785}\footnote{We recall that our convention of modulus function is opposite to that of Renard.}.
\begin{lemma}\label{lem: vol cart}
There exists a basis $\mathcal{I}$ of neighbourhoods of the identity in $G$ consisting of open normal subgroups of $K$ such that
\[
\vol(K_0 fc K_0)=\delta_{P_0}^{-1}(fc) \vol(K_0)
\]
for all $K_0\in\mathcal{I}$.
\end{lemma}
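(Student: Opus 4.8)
The plan is to reduce the statement to two standard facts about $p$-adic reductive groups and their Cartan decompositions, as packaged in Renard's book. First I would recall that the Cartan decomposition $G=\bigsqcup_{c\in C_0^{\wpos}}\bigsqcup_{f\in F_0}KfcK$ from the previous paragraph is, up to the finite set $F_0$ (which accounts for the failure of $A_0$ to surject onto the relevant double cosets when $G$ is not split or simply connected), the classical decomposition $G=K\overline{A_0^+}K$. The content to be extracted is a volume formula: for $g=fc$ with $c$ deep enough in the positive cone, the volume of the double coset $K_0 g K_0$, for $K_0$ a small enough open normal subgroup of $K$, is governed by the modulus character $\delta_{P_0}$ evaluated at $g$, times $\vol(K_0)$.

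The key steps, in order, would be: (i) cite \cite[Theorem V.5.2]{MR2567785} to get, for each $K_0$ in a suitable neighbourhood basis $\mathcal{I}$ of the identity consisting of open normal subgroups of $K$, an \emph{exact} index formula of the shape $[K_0 g K_0 : K_0] = \delta_{P_0}^{-1}(g)$ (with the $F_0$-representatives chosen as in \cite[Theorem V.3.21]{MR2567785} precisely so that this holds on the nose, not merely up to a bounded factor); here one uses that $K_0$ is normal in $K$ so that $K_0 g K_0$ is a union of $[K_0 g K_0 : K_0]$ left $K_0$-cosets, each of volume $\vol(K_0)$; (ii) translate the index into the volume identity $\vol(K_0 f c K_0)=\delta_{P_0}^{-1}(fc)\,\vol(K_0)$, which is exactly the assertion; (iii) verify that the neighbourhood basis $\mathcal{I}$ can be taken independent of $c\in C_0^{\wpos}$ and $f\in F_0$ — this is where one invokes the uniformity built into the proof of \cite[Theorem VII.1.2]{MR2567785}, namely that the Iwahori-type filtration subgroups work simultaneously for all elements of the positive cone because conjugation by $\overline{A_0^+}$ contracts the unipotent radical of $P_0$ and expands that of the opposite parabolic in a controlled way. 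The footnote about the modulus convention is just bookkeeping: Renard's $\delta$ and ours differ by an inversion, and one checks the exponent comes out as written.

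The main obstacle, such as it is, is really an expository one rather than a mathematical one: the statement is essentially a repackaging of results already in \cite{MR2567785}, so the work is in (a) isolating exactly which combination of Renard's theorems yields the clean multiplicative formula with no error term, and (b) making sure the \emph{single} neighbourhood basis $\mathcal{I}$ works for \emph{all} $fc$ at once — a naive application of a volume estimate would give a formula with an implied constant depending on $c$, and the point is that with the right choice of $F_0$ and of the filtration subgroups $K_0$ the constant is exactly $1$. Once the correct references are assembled, the remaining verification is the routine normal-subgroup counting argument in step (i).
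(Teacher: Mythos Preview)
Your proposal is correct and follows exactly the paper's approach: the paper does not give a standalone proof but simply states that, with $F_0$ chosen as in \cite[Theorem V.3.21]{MR2567785}, the lemma follows from \cite[Theorem V.5.2]{MR2567785} together with the proof of \cite[Theorem VII.1.2]{MR2567785}. Your write-up is in fact a more detailed unpacking of precisely these citations (including the modulus-convention footnote), so there is nothing to add.
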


\subsection{The symmetric subgroup}

Let $\inv$ be an involution on $\grp G$ defined over $F$ and 
\[
\grp H=\grp G^\inv=\{g\in \grp G: \theta(g)=g\}.
\] 
We further denote by $\inv$ the differential of its action on $G$. It is an involution on $\Lie(G)$ and 
\begin{equation}\label{eq: lie inv}
\Lie(H)=\Lie(G)^\inv.
\end{equation}

Let $\grp H^\circ$ be the connected component of the identity in $\grp H$. It is a connected reductive $F$-group and $\grp H^\circ$ is of finite index in $\grp H$. %By a semi-standard parabolic $F$-subgroup of $\grp H^\circ$ we mean one that contains $\grp A_0^+$.
% and by a standard one that contains $\grp P_1^\inv$. 

For a $\inv$-stable $F$-torus $\grp T$ in $\grp G$ let $\grp T^+$ (resp.~ $\grp T^-$) be the maximal subtorus of $\grp T^\inv$ (resp.~$\{t\in T:\inv(t)=t^{-1}\}$). Then $\grp T=\grp T^+ \grp T^-$. In particular, an element of $X^*(T)$ is determined by its restrictions to $\grp T^+$ and $\grp T^-$.

\section{Preliminaries on the symmetric subgroup}\label{prelim}

Note that $\inv$ induces an involution on the set $X_*(G)$ that we further denote by $\inv$, its fixed points are precisely the elements of $X_*(H)$.

\begin{lemma}\label{lem: par of H}
The collection of parabolic $F$-subgroups of $\grp H^\circ$ is the set of groups of the form $\grp P\cap \grp H^\circ$ where $\grp P$ is a $\inv$-stable parabolic $F$-subgroup of $\grp G$. 
%In particular, $\grp P_1\cap \grp H^\circ$ is a minimal parabolic $F$-subgroup of $\grp H^\circ$.
\end{lemma}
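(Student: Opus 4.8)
We need to show: the parabolic $F$-subgroups of $\grp H^\circ$ are exactly the groups $\grp P \cap \grp H^\circ$ where $\grp P$ ranges over $\inv$-stable parabolic $F$-subgroups of $\grp G$. There are two inclusions. For the first, suppose $\grp P$ is a $\inv$-stable parabolic $F$-subgroup of $\grp G$; we must show $\grp P \cap \grp H^\circ$ is parabolic in $\grp H^\circ$. Using the characterization recalled in the excerpt (\cite[Lemma 15.1.2]{MR2458469}), write $\grp P = \grp P_G(\lambda)$ for some $\lambda \in X_*(G)$. Since $\grp P$ is $\inv$-stable, $\grp P_G(\inv\lambda) = \inv(\grp P_G(\lambda)) = \grp P_G(\lambda)$. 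A cocharacter whose associated parabolic equals $\grp P_G(\lambda)$ may be averaged: replacing $\lambda$ by $\lambda + \inv\lambda$ (which lies in $X_*(H)$ by the remark preceding the lemma that $X_*(G)^\inv = X_*(H)$) does not change the associated parabolic of $\grp G$, because $\grp P_G(\mu)$ depends only on the facet of $\mu$, and $\lambda$ and $\lambda+\inv\lambda$ lie in the same closed facet — here I would invoke, or give a short direct argument, that $\grp P_G(\lambda) = \grp P_G(\lambda + \nu)$ whenever $\nu \in X_*(M_G(\lambda))$ and $\lambda,\lambda+\nu$ are on the same side of all relevant root walls; more robustly, $\grp P_G(\lambda) = \grp P_G(n\lambda)$ for $n>0$ and one can choose the averaging to stay in the correct chamber. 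So we may take $\lambda \in X_*(H)$. Then $\grp P_G(\lambda) \cap \grp H^\circ = \grp P_{H^\circ}(\lambda)$ directly from the functorial definition of $\grp P(\lambda)$ (the defining condition — that $a \mapsto \lambda(a) x \lambda(a)^{-1}$ extends over $\add$ — is inherited by the subgroup $\grp H^\circ$), hence is parabolic in $\grp H^\circ$, and it is defined over $F$ since $\lambda$ is.

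For the converse, let $\grp Q$ be a parabolic $F$-subgroup of $\grp H^\circ$. Write $\grp Q = \grp P_{H^\circ}(\lambda)$ with $\lambda \in X_*(H^\circ) = X_*(H) \subseteq X_*(G)$ (again by \cite[Lemma 15.1.2]{MR2458469} applied to $\grp H^\circ$). Set $\grp P = \grp P_G(\lambda)$, a parabolic $F$-subgroup of $\grp G$. Since $\lambda \in X_*(H)$ is $\inv$-fixed, $\inv(\grp P) = \inv(\grp P_G(\lambda)) = \grp P_G(\inv\lambda) = \grp P_G(\lambda) = \grp P$, so $\grp P$ is $\inv$-stable. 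As in the first part, $\grp P \cap \grp H^\circ = \grp P_{H^\circ}(\lambda) = \grp Q$, which exhibits $\grp Q$ in the desired form.

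The main obstacle is the averaging step: passing from an arbitrary $\lambda$ with $\grp P_G(\lambda)$ $\inv$-stable to a $\inv$-fixed $\lambda$ defining the same parabolic. The cleanest route is to note that $\grp P_G(\lambda)$ is determined by which roots $\alpha$ satisfy $\sprod{\alpha}{\lambda} \ge 0$, that $\inv$-stability of $\grp P_G(\lambda)$ means this sign pattern is preserved under $\alpha \mapsto \inv^*\alpha$, and hence that $\lambda' = \lambda + \inv\lambda$ satisfies $\sprod{\alpha}{\lambda'} \ge 0 \iff \sprod{\alpha}{\lambda} \ge 0$ for all roots $\alpha$ of $\grp G$ (one should check the strict/zero cases do not collapse — they cannot, since $\sprod{\alpha}{\lambda}$ and $\sprod{\inv^*\alpha}{\lambda} = \sprod{\alpha}{\inv\lambda}$ have the same sign, so their sum vanishes only if both do). Then $\grp P_G(\lambda') = \grp P_G(\lambda)$ and $\lambda' \in X_*(H)$. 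I expect this to be a short lemma in its own right, possibly already standard (it is essentially \cite[Proposition 5.9]{MR1215304} or similar in the Helminck–Wang framework); if an off-the-shelf reference is available it should simply be cited, otherwise the root-sign argument above suffices.
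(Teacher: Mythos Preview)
Your converse direction (every parabolic $F$-subgroup of $\grp H^\circ$ arises as $\grp P \cap \grp H^\circ$ for a $\inv$-stable parabolic $\grp P$ of $\grp G$) is exactly the paper's argument.

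For the forward direction, however, your averaging step has a genuine gap. You start from $\grp P = \grp P_G(\lambda)$ with $\lambda \in X_*(G)$ and then form $\lambda + \inv\lambda$. But $X_*(G)$ is only a \emph{set} of one-parameter subgroups (as the paper explicitly states), not a group, so $\lambda + \inv\lambda$ is not defined unless the images of $\lambda$ and $\inv\lambda$ lie in a common torus. Your root-sign argument likewise presupposes that $\lambda$ lies in a $\inv$-stable maximal $F$-split torus $\grp A$ contained in $\grp P$, so that $\inv$ acts on $X_*(A)$ and permutes the set of roots. The existence of such an $\grp A$ is not automatic; it is exactly the content of \cite[Lemma~2.4]{MR1215304}, which is what the paper invokes. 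Once this torus is available, the paper simply cites \cite[Lemma~3.3]{MR1215304} to obtain directly a cocharacter $\lambda \in X_*(A^+)\subseteq X_*(H^\circ)$ with $\grp P = \grp P_G(\lambda)$, and then $\grp P\cap \grp H^\circ=\grp P_{H^\circ}(\lambda)$ as you say. Your averaging argument \emph{would} go through once $\grp A$ is in hand (the sign pattern is indeed $\inv$-stable because $\grp U$ and the semi-standard Levi $\grp M$ are both $\inv$-stable), but by that point it is a detour: the Helminck--Wang lemmas already hand you a $\inv$-fixed $\lambda$, so no averaging is needed.

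In short: the missing ingredient is precisely the existence of a $\inv$-stable maximal split torus inside the given $\inv$-stable parabolic, and that is a nontrivial input from Helminck--Wang rather than something the averaging trick can replace.
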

\begin{proof}
A parabolic $F$-subgroup of $\grp H^\circ$ is of the form $\grp{P_{H^\circ}}(\lambda)$, where $\lambda\in X_*(H^\circ)\subset X_*(G)$. It follows by definition that $\grp{P_{H^\circ}}(\lambda) =   \grp{P_G}(\lambda)\cap \grp H^\circ $. Note further that $\theta(\lambda)=\lambda$ and therefore $\inv(\grp{P_G}(\lambda))=\grp{P_G}(\inv(\lambda))=\grp{P_G}(\lambda)$, i.e., $\grp{P_G}(\lambda)$ is a $\inv$-stable parabolic $F$-subgroup of $\grp G$. 

Conversely, suppose that $\grp P$ is a $\inv$-stable parabolic $F$-subgroup of $\grp G$. By \cite[Lemma 2.4]{MR1215304} there exists a maximal $\inv$-stable torus $\grp A$ of $\grp G$ contained inside $\grp P$. Now, by \cite[Lemma 3.3]{MR1215304} there exists $\lambda\in X_*(A^+)$ such that $\grp P = \grp{P_G}(\lambda)$. Since $A^+\subset H^\circ$, the $F$-subgroup $\grp P \cap \grp H^\circ = \grp{P_{H^\circ}}(\lambda)$ of $\grp H^\circ$ is parabolic.

\end{proof}

Fix a minimal parabolic $F$-subgroup $\grp{ P_0^H}$ of $\grp H^\circ$. Let $\grp P_1$ be minimal amongst the $\theta$-stable parabolic $F$-subgroups $\grp P$ of $\grp G$ such that  $\grp P \cap \grp H^\circ = \grp{P_0^H}$. It follows from Lemma \ref{lem: par of H} that $\grp P_1$ is in fact a minimal $\theta$-stable parabolic $F$-subgroup of $\grp G$.

We may choose the minimal parabolic $F$-subgroup $\grp P_0$ of $\grp G$ to be contained in $\grp P_1$. By \cite[Lemma 2.4]{MR1215304} we may and do further choose $\grp A_0$ to be $\inv$-stable. Thus $\inv$ acts on $X_*(A_0)$, $X^*(A_0)$, $\aaa_0$ and $\aaa_0^*$. 

Note that if $\alpha\in \roots^G$ has root vector $v\in \Lie(G)$ then 
\[
\Ad(\inv(a))\inv(v)=\inv(\Ad(a)v)=\alpha(a)\inv(v),\ a\in A_0 ,
\]
i.e., $\inv(v)$ is a root vector for $\inv(\alpha)$ and therefore $\inv$ acts on $\roots^G$ and maps the root space of $\alpha$ to that of $\inv(\alpha)$.

If $\grp P=\grp M\ltimes \grp U$ is a semi-standard $\inv$-stable parabolic $F$-subgroup of $\grp G$ then $\grp U$ and $\grp M$ are $\inv$-stable by the uniqueness of the semi-standard Levi decomposition. Thus, $\grp{A_M}$ is also $\inv$-stable. 
%It further follows from \cite[Lemma 1.7]{MR1215304} that $\grp P^\inv$ is a parabolic $F$-subgroup of $\grp H$.

By \cite[Lemma 3.5]{MR1215304} $\grp A_0^+$ is a maximal $F$-split torus of $\grp H$ and the standard Levi decomposition $\grp P_1=\grp M_1 \ltimes \grp U_1$ is such that $\grp M_1$ is the centralizer of $\grp A_0^+$ in $\grp G$. 
%In particular, $\grp M_1$ and the unipotent radical $\grp U_1$ are $\inv$-stable.

Since $\inv$ acts as an involution on $\aaa_0$ it decomposes it into a direct sum of the $\pm 1$-eigenspaces which we
denote by $(\aaa_0)_\theta^\pm$. Similarly 
\[
\aaa_0^*=(\aaa_0^*)_\inv^+\oplus (\aaa_0^*)_\inv^-.
\]
The inclusion $X_*(A_0^+)\subseteq X_*(A_0)$ induces the identification 
\[
X_*(A_0^+)\otimes_\Z \R \simeq (\aaa_0)_\inv^+. 
\]
It is straightforward that the pairing $\sprod{\cdot}{\cdot}_G$ is $\inv$ invariant and therefore $(\aaa_0^*)_\inv^\pm$ is the dual of $(\aaa_0)_\inv^\pm$.
Thus, $\sprod{\cdot}{\cdot}_G$ restricted to $(\aaa_0^*)_\inv^+\times (\aaa_0)_\inv^+$ is the natural pairing $\sprod{\cdot}{\cdot}_H$ defined with respect to $A_0^+$.

Let $\grp P=\grp M\ltimes \grp U$ be a standard, $\inv$-stable parabolic $F$-subgroup of $\grp G$. Then $\inv$ acts as an involution on $\aaa_M$ and we obtain a decomposition $\aaa_M=(\aaa_M)_\inv^+\oplus (\aaa_M)_\inv^-$ to the $\pm1$-eigenspaces. A similar decomposition holds for the dual space and $(\aaa_M^*)_\inv^{\pm}$ is the dual of $(\aaa_M)_\inv^\pm$. We have $(\aaa_M)_\inv^+=\aaa_{A_M^+}$ and similarly for the dual space. We denote by $\lambda_\inv^\pm$ the projection of $\lambda\in \aaa_M^*$ to $(\aaa_M^*)_\inv^\pm$.

By \cite[Lemma 3.3]{MR1215304} every $\inv$-stable, semi-standard parabolic $F$-subgroup of $\grp G$ is of the form $\grp{P_G}(\lambda)$ for some $\lambda\in X_*(A_0^+)$.
In particular, there exists $\lambda_1\in X_*(A_0^+)$ such that $\grp P_1=\grp{P_G}(\lambda_1)$.

Let $\roots^H=R(A_0^+,H)$ be the root system of $H$, $\roots^{H,\pos}=R(A_0^+, P_0^H)$ the subset of positive roots and $\Delta^H$ the basis of simple roots with respect to $P_0^H$ and $W^H$ the Weyl group of $\roots^H$.

\subsection{The descendent root system}
Let $\roots^{G/H}=R(A_0^+,G)$
be the set of roots of $A_0^+$ in $\Lie(G)$. Clearly $\roots^H\subseteq \roots^{G/H}$. It follows from \cite[Proposition 5.7]{MR1215304} that, unless empty, $\roots^{G/H}$ is a root system with Weyl group $W^{G/H}=N_G(A_0^+)/C_G(A_0^+)$. (Recall that $C_G(A_0^+)=M_1$.) In particular, $W^H\subseteq W^{G/H}$. 
%It further follows from \cite[Proposition 3.4]{MR1215304} and Lemma \ref{lem: par of H} that if  $\roots^{G/H}$ is empty then $H$ is anisotropic.
Furthermore, if  $\roots^{G/H}$ is empty then $H/A_G^+$ is compact.
This case will be of little interest to us and we assume in what follows that $H/A_G^+$ is isotropic.
We call $\roots^{G/H}$ the \emph{descendent root system}.

Since the root space decomposition of $\Lie(G)$ with respect to $A_0$ automatically provides a decomposition of $\Lie(G)$ into $A_0^+$-eigenspaces we have
\begin{equation}\label{lem: all rts rest}
\roots^{G/H}=\{\alpha|_{\grp A_0^+}: \alpha\in \roots^G\}\setminus \{0\}.
\end{equation}
%\begin{lemma}\label{lem: all rts rest}
%We have
%\[
%\roots^{G/H}=\{\alpha|_{\grp A_0^+}: \alpha\in \roots^G\}\setminus \{0\}.
%\]
%\end{lemma}
%\begin{proof}
%The inclusion $\{\alpha|_{\grp A_0^+}: \alpha\in \roots^G\}\setminus \{0\}\subseteq \roots^{G/H}$ is obvious. (A root vector for $\alpha$ in $\Lie(G)$ will also be a root vector for its restriction to $\grp A_0^+$ whenever non-zero.) Let $\beta\in \roots^{G/H}$ and let $v\in \Lie(G)$ be a root vector for $\beta$. Then we can decompose $v$ uniquely as $v=w_1+\cdots+w_k$ where the $w_i$'s are eigenvectors of the adjoint action of $A_0$ with different eigenvalues (a-priory one of them is possibly trivial, the others are in $\roots^G$). Applying $\Ad(a)$ for $a\in \grp A_0^+$ we get that, restricted to $\grp A_0^+$, these eigenvalues equal $\beta$. In particular, there exists $\alpha\in \roots^G$ such that $\alpha|_{\grp A_0^+}=\beta$. The lemma follows.
%\end{proof}

\begin{lemma}\label{lem: pos rest}
Let $\alpha\in \roots^G$ be such that $\alpha|_{\grp A_0^+}\in \roots^H$. Then $\alpha\in \roots^{G,\pos}$ if and only if $\alpha|_{\grp A_0^+}\in \roots^{H,\pos}$.
\end{lemma}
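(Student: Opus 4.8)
The plan is to exploit the fact that $\grp P_0^H = \grp P_1 \cap \grp H^\circ$ is a minimal parabolic of $\grp H^\circ$ whose positive root system $\roots^{H,\pos} = R(A_0^+, P_0^H)$ is cut out, via restriction to $A_0^+$, by the positivity structure carried by $\grp P_1$ (equivalently, by the cocharacter $\lambda_1 \in X_*(A_0^+)$ with $\grp P_1 = \grp P_G(\lambda_1)$). First I would record what positivity means on each side in terms of this cocharacter: for $\beta \in \roots^H$, one has $\beta \in \roots^{H,\pos}$ iff $\sprod{\beta}{\lambda_1} > 0$ (and $\beta$ occurs in $\Lie(U_1 \cap H^\circ)$); this is just the characterization of $\grp P_G(\lambda_1)$ via $\Ad(\lambda_1(a))$ having non-negative weights on $\Lie(P_1)$, restricted to the $H^\circ$-part. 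Note $\sprod{\beta}{\lambda_1}$ makes sense either as a pairing over $A_0^+$ or, since $\lambda_1 \in X_*(A_0^+) \subseteq X_*(A_0)$ and $\beta = \alpha|_{A_0^+}$, as the pairing $\sprod{\alpha}{\lambda_1}_G$ — these agree because the $G$-pairing restricts to the $H$-pairing on $(\aaa_0^*)_\inv^+ \times (\aaa_0)_\inv^+$, a fact already noted in the excerpt.

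Next I would observe that $\grp P_0 \subseteq \grp P_1 = \grp P_G(\lambda_1)$, so $\lambda_1$ lies in the closed positive cone determined by $\grp P_0$, i.e. $\sprod{\alpha}{\lambda_1}_G \ge 0$ for every $\alpha \in \roots^{G,\pos}$. Hence for any $\alpha \in \roots^G$ we get the implication $\alpha \in \roots^{G,\pos} \Rightarrow \sprod{\alpha}{\lambda_1}_G \ge 0$. Combined with the previous paragraph, this already gives one direction under the hypothesis that $\alpha|_{A_0^+} \in \roots^H$: if $\alpha|_{A_0^+} \in \roots^{H,\pos}$ then $\sprod{\alpha}{\lambda_1}_G = \sprod{\alpha|_{A_0^+}}{\lambda_1}_H > 0$, which is incompatible with $-\alpha \in \roots^{G,\pos}$ (that would force $\sprod{\alpha}{\lambda_1}_G \le 0$); since $\roots^G$ is a root system and $\pm\alpha$ are the only $G$-roots restricting to $\pm(\alpha|_{A_0^+})$ up to sign issues, we conclude $\alpha \in \roots^{G,\pos}$. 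The reverse direction is the symmetric argument: if $\alpha \in \roots^{G,\pos}$ then $\sprod{\alpha}{\lambda_1}_G \ge 0$, and it suffices to rule out $\sprod{\alpha}{\lambda_1}_G = 0$ — but if $\alpha|_{A_0^+} \in \roots^H$ then it is a root of $M_1 \cap H^\circ = (C_G(A_0^+))\cap H^\circ$? no — rather, $\sprod{\alpha}{\lambda_1}_G = 0$ would mean $\alpha|_{A_0^+}$ is a root of the Levi $\grp M_1 = C_G(A_0^+)$, which is impossible since $A_0^+$ is central in $M_1$ and hence has no nonzero roots there; so $\sprod{\alpha}{\lambda_1}_G > 0$, giving $\alpha|_{A_0^+} \in \roots^{H,\pos}$ by the characterization above.

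The step I expect to require the most care is the clean identification of $\roots^{H,\pos}$ with $\{\beta \in \roots^H : \sprod{\beta}{\lambda_1} > 0\}$: one must be sure that $\grp P_0^H$, chosen at the outset as an arbitrary minimal parabolic of $\grp H^\circ$ and then used to pin down $\grp P_1$, really is $\grp P_G(\lambda_1) \cap \grp H^\circ = \grp P_{H^\circ}(\lambda_1)$, so that its positive roots are exactly the $A_0^+$-weights occurring in $\Lie(U(\lambda_1)) \cap \Lie(H)$, i.e. those $\beta$ with $\sprod{\beta}{\lambda_1} > 0$. This is essentially the content of Lemma \ref{lem: par of H} together with the definition of $\grp P_1$ and \cite[Lemma 3.3, Lemma 3.5]{MR1215304}, but it needs to be invoked explicitly. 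I would also double-check the subtlety that a single $\alpha \in \roots^G$ could restrict to $\alpha|_{A_0^+}$ while $-\alpha$ restricts to a distinct sign — this is fine since $\roots^G$ is $\inv$-stable and symmetric, and $\roots^{G/H}$ is a genuine root system by \cite[Proposition 5.7]{MR1215304}, so the sign of the restriction is well-posed. Once these bookkeeping points are settled, the proof is just the two chains of implications above.
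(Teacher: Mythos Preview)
Your proposal is correct and follows essentially the same approach as the paper: both arguments hinge on the cocharacter $\lambda_1\in X_*(A_0^+)$ with $\grp P_1=\grp P_G(\lambda_1)$ and $\grp P_0^H=\grp P_{H^\circ}(\lambda_1)$, the identity $\sprod{\alpha}{\lambda_1}_G=\sprod{\alpha|_{A_0^+}}{\lambda_1}_H$, the inclusion $\grp P_0\subseteq\grp P_1$ (equivalently $U_1\subseteq U_0$) to handle one direction, and the observation that $\sprod{\alpha}{\lambda_1}_G=0$ forces $\alpha\in R(A_0,M_1)$ and hence $\alpha|_{A_0^+}=0$ for the other. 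Your care in verifying $\grp P_0^H=\grp P_{H^\circ}(\lambda_1)$ is warranted, and the paper indeed relies on this (it follows from Lemma~\ref{lem: par of H} and the construction of $\grp P_1$).
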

\begin{proof}
Recall that $\lambda_1\in X_*(A_0^+)$ is such that $P_1=P_G(\lambda_1)$ and  $P_0^H=P_{H^\circ}(\lambda_1)$. Thus, $\alpha|_{\grp A_0^+}\in \roots^{H,\pos}$ if and only if $\sprod{\alpha|_{\grp A_0^+}}{\lambda_1}_H>0$. Our embedding of $X^*(A_0^+)$ in $(\aaa_0^*)_\inv^+$ identifies $\alpha|_{\grp A_0^+}$ with $\frac12(\alpha+\inv(\alpha))$. Since $\inv(\lambda_1)=\lambda_1$ it follows that 
\[
\sprod{\alpha}{\lambda_1}_G=\sprod{\frac12(\alpha+\inv(\alpha))}{\lambda_1}_G=\sprod{\alpha|_{\grp A_0^+}}{\lambda_1}_H. 
\]
Since $U_1\subseteq U_0$ it follows immediately that if $\alpha|_{\grp A_0^+}\in\roots^{H,\pos}$ then $\alpha\in \roots^{G,\pos}$. Conversely, if $\alpha\in \roots^{G,\pos}$ then $\sprod{\alpha}{\lambda_1}_G\ge 0$. If $\sprod{\alpha}{\lambda_1}_G=0$ then $\alpha\in R(M_1,A_0)$. But since $\grp A_0^+$ is contained in the centre of $\grp M_1$ this contradicts the fact that $\alpha|_{\grp A_0^+}$ is non-trivial. It follows that $\sprod{\alpha}{\lambda_1}_G>0$ and therefore that $\alpha|_{\grp A_0^+}\in \roots^{H,\pos}$.

\end{proof}

Note that
\begin{equation}\label{eq: inv on pm}
\inv(x)|_{A_0^+}=x|_{A_0^+} \ \ \ \text{and} \ \ \ \inv(x)|_{A_0^-}=-x|_{A_0^-} \ \ \ \text{for all} \ \ \ x\in X^*(A_0). 
\end{equation}
It follows that 
\begin{equation}\label{eq: zero cond}
x+\inv(x)=0 \ \ \ \text{if and only if} \ \ \ x|_{A_0^+}=0.
\end{equation}

Let 
\[
\dnull=\{\alpha\in \Delta^G:\inv(\alpha)=-\alpha\}\mathop{=}\limits^{\eqref{eq: zero cond}}\{\alpha\in \Delta^G:\alpha|_{A_0^+}=0\}.
\] 
Let $X_0$ be the subgroup of $X^*(A_0)$ generated by $\dnull$. Also set
\[
\dnotnull=\Delta^G\setminus \dnull.
\]
\begin{lemma}\label{lem: inv acts}
For every $\alpha\in \dnotnull$ there exist $\beta\in \dnotnull$ and $x\in X_0$ such that $\inv(\alpha)=\beta+x$.
\end{lemma}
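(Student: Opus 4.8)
The plan is to twist $\inv$ by the longest element of the Weyl group generated by $\dnull$ so as to turn it into an honest automorphism of the based root system $(\roots^G,\Delta^G)$. Recall that $\grp M_1=C_{\grp G}(\grp A_0^+)$, that $\grp P_1=\grp M_1\ltimes\grp U_1$ is standard with $\grp P_0\subseteq\grp P_1$, and that by definition $\roots_0:=\{\alpha\in\roots^G:\alpha|_{\grp A_0^+}=0\}$ is exactly $\roots^{M_1}$. Since $\grp P_0\subseteq\grp P_1$, the set of simple roots of $\grp M_1$ with respect to $\grp P_0\cap\grp M_1$ is $\Delta^G\cap\roots^{M_1}=\dnull$, so $\roots_0$ is the Levi sub-root-system of $\roots^G$ generated by $\dnull$ and $X_0$ is its root lattice. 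Write $\roots_0^{\pos}=\roots_0\cap\roots^{G,\pos}$, let $W_0$ be the Weyl group of $\roots_0$ (generated by $s_\gamma$, $\gamma\in\dnull$), and let $w_0\in W_0$ be the longest element relative to $\roots_0^{\pos}$, so that $w_0^2=1$ and $w_0(\roots_0^{\pos})=-\roots_0^{\pos}$.

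Next I would record two invariance facts and assemble them. On the one hand, by definition $\inv(\gamma)=-\gamma$ for every $\gamma\in\dnull$, so $\inv$ acts as $-1$ on $X_0\otimes_\Z\R$ and $\inv(\roots_0^{\pos})=-\roots_0^{\pos}$. On the other hand, since $\grp P_1$ is $\inv$-stable and semi-standard, $\grp U_1$ is $\inv$-stable, so $\inv$ permutes the set $R(A_0,U_1)=\roots^{G,\pos}\setminus\roots_0^{\pos}$ of $A_0$-roots in $\Lie(U_1)$ (it sends the $\alpha$-weight space of $\Lie(U_1)$ to the $\inv(\alpha)$-weight space); and $W_0$ also preserves $R(A_0,U_1)$, by the standard fact that $s_\gamma(\beta)=\beta-\sprod{\beta}{\gamma^\vee}\gamma$ keeps a positive root lying outside $\roots_0$ both positive and outside $\roots_0$. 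Now set $\tau:=w_0\circ\inv$, a bijection of $\roots^G$. Decomposing $\roots^{G,\pos}=\roots_0^{\pos}\sqcup R(A_0,U_1)$ and using the two facts, $\tau(\roots^{G,\pos})=w_0\bigl((-\roots_0^{\pos})\sqcup R(A_0,U_1)\bigr)=\roots_0^{\pos}\sqcup R(A_0,U_1)=\roots^{G,\pos}$, so $\tau$ permutes $\Delta^G$. For $\gamma\in\dnull$ one has $\tau(\gamma)=-w_0(\gamma)$, and since $-w_0$ permutes the simple roots $\dnull$ of $\roots_0$, the permutation $\tau$ stabilizes $\dnull$, hence also stabilizes $\dnotnull=\Delta^G\setminus\dnull$.

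Finally, given $\alpha\in\dnotnull$, put $\beta:=\tau(\alpha)\in\dnotnull$. Since $w_0^2=1$ this gives $\inv(\alpha)=w_0^{-1}(\tau(\alpha))=w_0(\beta)$. Writing $w_0$ as a product of simple reflections $s_\gamma$ with $\gamma\in\dnull$ and applying them successively to $\beta$, each step alters the current vector by an integer multiple of some $\gamma\in\dnull$, so $w_0(\beta)=\beta+x$ with $x\in X_0$; hence $\inv(\alpha)=\beta+x$, as required. The only slightly delicate point is the bookkeeping in the second paragraph — getting the partition $\roots^{G,\pos}=\roots_0^{\pos}\sqcup R(A_0,U_1)$ and its behaviour under $\inv$ and under $w_0$ exactly right, which rests on $\grp P_0\subseteq\grp P_1$ and on the $\inv$-stability of $\grp U_1$ — but once that is in place the conclusion is immediate.
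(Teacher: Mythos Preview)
Your proof is correct and takes a genuinely different route from the paper's. The paper first argues that $\inv(\alpha)\in\roots^{G,\pos}$ by constructing a root vector $v+\inv(v)\in\Lie(H)$ for $\alpha|_{A_0^+}$ and invoking Lemma~\ref{lem: pos rest}; it then passes to the quotient $\Gamma=X^*(A_0)/X_0$, observes that the matrix of $\inv$ on the images of $\dnotnull$ in $\Gamma$ has non-negative integer entries and squares to the identity, and concludes it must be a permutation matrix. Your argument instead uses the $\inv$-stability of $\grp U_1$ directly (already recorded in the paper's preliminaries) to see that $\inv$ permutes $R(A_0,U_1)=\roots^{G,\pos}\setminus\roots_0^{\pos}$, then twists by the longest element $w_0$ of the Levi Weyl group $W_0$ to obtain a based-root-system automorphism $\tau=w_0\circ\inv$ of $(\roots^G,\Delta^G)$ stabilising $\dnull$ and hence $\dnotnull$. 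This is the standard ``$*$-action'' or Tits-index viewpoint on involutions. Your route is cleaner structurally and bypasses both Lemma~\ref{lem: pos rest} and the matrix trick; the paper's route is more hands-on and does not rely on the longest-element machinery. Both yield exactly the same $\beta$ and the same conclusion $\inv(\alpha)-\beta\in X_0$.
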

\begin{proof}
It follows from the definitions that $X_0$ is $\inv$-stable. Thus, the action of $\inv$ on $X^*(A_0)$ induces an action (that we still denote by $\inv$) as an involution on $\Gamma:=X^*(A_0)/X_0$.

Let $\alpha\in \dnotnull$. If $\inv(\alpha)=\alpha$ then $\beta=\alpha$, $x=0$ and we are done. Assume that $\inv(\alpha)\ne \alpha$. Let $v\in \Lie(G)$ be a root vector for $\alpha$. Then $\inv(v)$ is a root vector for $\inv(\alpha)$ and by our assumption $v$ and $\inv(v)$ are linearly independent. It follows from \eqref{eq: inv on pm} that $v+\inv(v)\in \Lie(G)^\inv=\Lie(H)$ is a root vector for the root $\alpha|_{A_0^+}\in \roots^H$. By Lemma \ref{lem: pos rest} $\alpha|_{A_0^+}\in \roots^{H,\pos}$ and $\inv(\alpha)\in \roots^{G,\pos}$.

Let $x\mapsto \bar x$ be the projection of $X^*(A_0)$ to $\Gamma$ and let $\dnotnull=\{\alpha_1,\dots,\alpha_t\}$. Clearly, $\{\bar\alpha_1,\dots,\bar\alpha_t\}$ are $\Z$-linearly independent in $\Gamma$. Since $\inv(\alpha_i)\in \roots^{G,\pos}$ for all $i$, it follows that there exists $M=(n_{i,j})\in M_t(\Z)$, a matrix of non-negative integers, such that 
\[
\overline{\inv(\alpha_i)}=\sum_{j=1}^t n_{i,j} \bar\alpha_j.
\]
Since $\inv$ is an involution we get that $M^2=I_t$ is the identity matrix. It is now straightforward that $M$ is a permutation matrix. The lemma follows.
\end{proof}

Let 
\[
\Delta^{G/H}=\{\alpha|_{A_0^+}:\alpha\in \dnotnull\}=\{\alpha|_{A_0^+}:\alpha\in \Delta^G\}\setminus\{0\}\subset X^*(A_0^+) .
\] 

\begin{proposition}\label{prop: HW basis}
The set $\Delta^{G/H}$ is a basis of simple roots for the descendent root system $\roots^{G/H}$. 
\end{proposition}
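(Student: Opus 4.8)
The plan is to verify directly that $\Delta^{G/H}$ has the two characterizing properties of a base of the root system $\roots^{G/H}$: that it is an $\R$-basis of the linear span of $\roots^{G/H}$, and that every element of $\roots^{G/H}$ is an integral combination of $\Delta^{G/H}$ whose coefficients are either all non-negative or all non-positive. Throughout, let $\pr\colon\aaa_0^*\to(\aaa_0^*)_\inv^+$ be the projection onto the $(+1)$-eigenspace of $\inv$; under the identification $(\aaa_0^*)_\inv^+\simeq X^*(A_0^+)\otimes_\Z\R$ it is restriction to $A_0^+$, so that $\pr(\alpha)=\alpha|_{A_0^+}=\tfrac12(\alpha+\inv\alpha)$ for $\alpha\in X^*(A_0)$ and $\Delta^{G/H}=\pr(\dnotnull)$.

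The second property, and the fact that $\Delta^{G/H}$ spans the span of $\roots^{G/H}$, are inherited from $\roots^G$. Since $X_0$ is generated by roots $\alpha$ with $\inv\alpha=-\alpha$, it lies in $\ker\pr$, while $\pr$ maps $\dnotnull$ into $\Delta^{G/H}$. Given $\gamma\in\roots^{G/H}$, by \eqref{lem: all rts rest} we may write $\gamma=\pr(\alpha)$ with $\alpha\in\roots^G$; replacing $\alpha$ by $-\alpha$ if necessary so that $\alpha\in\roots^{G,\pos}$, expand $\alpha$ in the basis $\Delta^G$ with non-negative integer coefficients and apply $\pr$: the contributions of $\dnull$ vanish and those of $\dnotnull$ land in $\Delta^{G/H}$, so that $\gamma$ (resp.~$-\gamma$) is a non-negative integral combination of $\Delta^{G/H}$. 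In particular $\roots^{G/H}\subset\operatorname{span}_\R\Delta^{G/H}$, and since $\Delta^{G/H}\subset\roots^{G/H}$ by \eqref{lem: all rts rest}, the two spans coincide. Everything thus reduces to the linear independence of $\Delta^{G/H}$.

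For this I would use Lemma \ref{lem: inv acts}, which describes exactly how $\inv$ moves $\dnotnull$. Let $q$ be the projection of $(\aaa_0^G)^*=\operatorname{span}_\R\Delta^G$ onto $\operatorname{span}_\R\dnotnull$ with kernel $\operatorname{span}_\R\dnull$, and note that $\pr(\alpha)=\tfrac12(\alpha+\inv\alpha)$ lies in $(\aaa_0^G)^*$ for every $\alpha\in\roots^G$, as $\inv$ preserves $\roots^G$. By Lemma \ref{lem: inv acts}, for $\alpha\in\dnotnull$ write $\inv\alpha=\sigma(\alpha)+x_\alpha$ with $\sigma(\alpha)\in\dnotnull$ and $x_\alpha\in X_0$; applying $\inv$ once more and using that $X_0$ is $\inv$-stable shows that $\sigma$ is an involution of $\dnotnull$, and applying $\pr$ (which annihilates $X_0$) shows that $\pr$ is constant on the $\sigma$-orbits. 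Since $q$ likewise annihilates $X_0$ and fixes $\dnotnull$ pointwise, $q(\pr(\alpha))=\tfrac12(\alpha+\sigma(\alpha))$ has positive coefficients with support equal to the $\sigma$-orbit of $\alpha$. Consequently, for $\delta\in\Delta^{G/H}$ the support of $q(\delta)$ is a $\sigma$-orbit depending only on $\delta$; since $\pr$ is constant on orbits, distinct elements of $\Delta^{G/H}$ have $q$-images supported on disjoint non-empty subsets of the linearly independent set $\dnotnull$, so these $q$-images are linearly independent, and hence so is $\Delta^{G/H}$. The proposition now follows from the standard characterization of a base of a root system. The one substantive step is this linear independence, and its crux, the permutation behaviour of $\inv$ on $\dnotnull$ up to elements of $X_0$, is exactly Lemma \ref{lem: inv acts}; what remains is bookkeeping with the eigenspace decomposition of $\aaa_0^*$ and with the partition $\Delta^G=\dnull\sqcup\dnotnull$.
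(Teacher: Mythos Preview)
Your proof is correct and follows essentially the same approach as the paper's own proof: both reduce the proposition to the linear independence of $\Delta^{G/H}$, and both deduce that independence from Lemma \ref{lem: inv acts} by observing that the fibers of the restriction map $\dnotnull\to\Delta^{G/H}$ are precisely the orbits of the involution $\sigma$ furnished by that lemma. The only difference is packaging: you phrase the argument via the auxiliary projection $q$ and the disjointness of orbit supports, whereas the paper chooses explicit representatives $\alpha_i$ for each $\beta_i\in\Delta^{G/H}$, lifts a linear relation $\sum x_i\beta_i=0$ to $\gamma=\sum x_i\alpha_i$, and uses $\gamma+\inv\gamma=0$ together with the linear independence of $\Delta^G$ to force all $x_i=0$.
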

\begin{proof}
Let $\beta=\alpha|_{A_0^+}\in \roots^{G/H}$ with $\alpha \in \roots^G$ (see \eqref{lem: all rts rest}). Then either $\alpha$ or $-\alpha$ is a linear combination with positive integer coefficients of elements of $\Delta$. Restricting to $A_0^+$ we get that, respectively, $\beta$ or $-\beta$ is a linear combination with positive integer coefficients of elements of $\Delta^{G/H}$. To prove the proposition we therefore only need to show that $\Delta^{G/H}$ is linearly independent. Set $\Delta^{G/H}=\{\beta_1,\dots,\beta_t\}$ and fix $\alpha_1,\dots,\alpha_t\in \dnotnull$ so that $\beta_i=\alpha_i|_{A_0^+}$, $i=1,\dots,t$. Let $\alpha_i'\in \dnotnull$ be given by Lemma \ref{lem: inv acts} so that $\inv(\alpha_i)-\alpha_i'\in X_0$. After rearrangement we may assume that there exist $k$, $0\le k\le t$ such that $\alpha_i'=\alpha_i$ if and only if $i\le k$. Note that $\{\alpha_i:i=1,\dots,t\} \cup \{\alpha_i': k<i\le t\}$ is a subset of exactly $2t-k$ elements in $\dnotnull$.

Suppose that $x_1\beta_1+\cdots+x_t\beta_t=0$, $x_1,\dots,x_t\in \R$ and let $\gamma=x_1\alpha_1+\cdots+x_t\alpha_t$. Then $\gamma|_{A_0^+}=0$ and by \eqref{eq: zero cond} $\gamma+\inv(\gamma)=0$. Therefore 
\[
\sum_{i=1}^k 2x_i \alpha_i+\sum_{i=k+1}^t x_i(\alpha_i+\alpha_i')\in X_0.
\]
From the linear independence of $\Delta^G$ it follows that $x_i=0$ for all $i$. The proposition follows.
\end{proof}
%\begin{remark}
%It follows from Lemma \ref{lem: inv acts} and Proposition \ref{prop: HW basis} that the action of $\inv$ on $\roots^G$ induces an action on $\dnotnull$. 
%\end{remark}

Note that our identifications give an action of the Weyl group $W^{G/H}$ on the vector space $(\aaa_0)_\inv^+$ and on its dual $(\aaa_0^*)_\inv^+$.

\begin{corollary}\label{cor: cone}
We have 
\begin{enumerate}
\item\label{part: roots} $\Delta^H\subseteq \ccone(A_0^+,\Delta^{G/H})$; 

\item\label{part: cone inc} $[(\aaa_0)_\inv^+]^{\Delta^{G/H},\wpos}\subseteq [(\aaa_0)_\inv^+]^{\Delta^H,\wpos}$ and hence $X_*(A_0^+)^{\Delta^{G/H},\wpos}\subseteq X_*(A_0^+)^{\Delta^H,\wpos}$; 

\item\label{part: cone deco} The set 
\[
[W^{G/H} / W^H]:=\{w\in W^{G/H}:w^{-1}[(\aaa_0)_\inv^+]^{\Delta^{G/H},\pos}\subseteq [(\aaa_0)_\inv^+]^{\Delta^H,\pos}\} 
\]
forms a complete set of representatives for $W^{G/H} / W^H$ and 
\[
X_*(A_0^+)^{\Delta^H,\wpos}=\cup_{w\in [W^{G/H} / W^H]} w^{-1} X_*(A_0^+)^{\Delta^{G/H},\wpos};
\]

\item\label{part: cone with w}
For every $w\in [W^{G/H} / W^H]$, $w(\roots^{H,\pos})\subseteq  \ccone(A_0^+,\Delta^{G/H})$.

\end{enumerate}
\end{corollary}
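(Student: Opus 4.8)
I would establish the four parts in order; the only substantial point is the chamber combinatorics behind \eqref{part: cone deco}. For \eqref{part: roots}, let $\beta\in\Delta^H$. Being a simple root, $\beta$ lies in $\roots^{H,\pos}\subseteq\roots^{G/H}$, so by \eqref{lem: all rts rest} there is $\alpha\in\roots^G$ with $\alpha|_{\grp A_0^+}=\beta$. Since $\alpha|_{\grp A_0^+}=\beta\in\roots^H$, Lemma \ref{lem: pos rest} gives $\alpha\in\roots^{G,\pos}$, so $\alpha$ is a non-negative integral combination of $\Delta^G$. Restricting that relation to $\grp A_0^+$ and dropping the summands indexed by $\dnull$ (each of which restricts to $0$) exhibits $\beta$ as a non-negative combination of $\{\gamma|_{\grp A_0^+}:\gamma\in\dnotnull\}=\Delta^{G/H}$; since $\beta\ne 0$, at least one coefficient is positive. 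In particular $\beta\in\ccone(A_0^+,\Delta^{G/H})$, which is \eqref{part: roots}. Part \eqref{part: cone inc} is then formal: if $x$ satisfies $\sprod{\alpha'}{x}\ge 0$ for all $\alpha'\in\Delta^{G/H}$ then $\sprod{\beta}{x}\ge0$ for all $\beta\in\Delta^H$ by \eqref{part: roots}, giving $[(\aaa_0)_\inv^+]^{\Delta^{G/H},\wpos}\subseteq[(\aaa_0)_\inv^+]^{\Delta^H,\wpos}$, and intersecting with the lattice $X_*(A_0^+)$ yields the inclusion of cocharacter cones.

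For \eqref{part: cone deco} I would work inside $V:=(\aaa_0)_\inv^+$ with the two hyperplane arrangements coming from $\roots^{G/H}$ and from $\roots^H\subseteq\roots^{G/H}$. Write $\mathcal C^{G/H}:=[V]^{\Delta^{G/H},\pos}$ and $\mathcal C^H:=[V]^{\Delta^H,\pos}$ for the respective dominant open chambers. Every $\roots^{G/H}$-chamber avoids all $\roots^H$-hyperplanes, hence lies in exactly one $\roots^H$-chamber; and since each $\beta\in\Delta^H$ is a non-negative combination of $\Delta^{G/H}$ with some positive coefficient (the computation for \eqref{part: roots}) one gets $\mathcal C^{G/H}\subseteq\mathcal C^H$. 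Now $W^{G/H}$ acts simply transitively on the $\roots^{G/H}$-chambers and $W^H\subseteq W^{G/H}$ acts simply transitively on the $\roots^H$-chambers, which are therefore exactly the $u\mathcal C^H$, $u\in W^H$. From this, $[W^{G/H}/W^H]$ is a transversal for $W^{G/H}/W^H$: for $w\in W^{G/H}$ the chamber $w^{-1}\mathcal C^{G/H}$ lies in a unique $v\mathcal C^H$ with $v\in W^H$, so $(wv)^{-1}\mathcal C^{G/H}\subseteq\mathcal C^H$, i.e.\ $wv\in[W^{G/H}/W^H]\cap wW^H$; and if $w,wu\in[W^{G/H}/W^H]$ with $u\in W^H$ then $u^{-1}w^{-1}\mathcal C^{G/H}$ is a non-empty subset of both $\mathcal C^H$ and $u^{-1}\mathcal C^H$, forcing $u=e$ by disjointness of distinct $\roots^H$-chambers. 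Moreover, by the very definition of $[W^{G/H}/W^H]$, deleting the $\roots^{G/H}$-hyperplanes from $\mathcal C^H$ leaves exactly $\bigsqcup_{w\in[W^{G/H}/W^H]}w^{-1}\mathcal C^{G/H}$; passing to closures gives $[V]^{\Delta^H,\wpos}=\bigcup_{w\in[W^{G/H}/W^H]}w^{-1}[V]^{\Delta^{G/H},\wpos}$, and intersecting with the $W^{G/H}$-stable lattice $X_*(A_0^+)$ gives the asserted decomposition. I expect this part to be the main obstacle: the argument must stay at the level of the reflection arrangements, using only $W^H\subseteq W^{G/H}$ and the hyperplane inclusion $\roots^H\subseteq\roots^{G/H}$ — note that $\roots^H$ need be neither a parabolic subsystem of $\roots^{G/H}$ nor of the same rank — together with the elementary chamber facts recalled above.

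Finally, part \eqref{part: cone with w} follows from the description of $[W^{G/H}/W^H]$ obtained in \eqref{part: cone deco}. Let $w\in[W^{G/H}/W^H]$ and $\beta\in\roots^{H,\pos}$. Then $w\beta\in\roots^{G/H}$ because $\roots^H\subseteq\roots^{G/H}$ and $w\in W^{G/H}$; and for any $y\in\mathcal C^{G/H}$ we have $w^{-1}y\in\mathcal C^H$ by the defining property of $[W^{G/H}/W^H]$, hence $\sprod{w\beta}{y}=\sprod{\beta}{w^{-1}y}>0$ since $\beta$ is positive for $\roots^H$. Thus $w\beta$, being a root of $\roots^{G/H}$ that is positive on the dominant chamber $\mathcal C^{G/H}$, is a non-negative integral combination of the simple roots $\Delta^{G/H}$ (Proposition \ref{prop: HW basis}), i.e.\ $w\beta\in\ccone(A_0^+,\Delta^{G/H})$.
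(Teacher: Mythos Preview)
Your proposal is correct and follows essentially the same line as the paper's proof: both argue parts \eqref{part: roots}--\eqref{part: cone inc} via Lemma \ref{lem: pos rest} and restriction of positive roots, both obtain \eqref{part: cone deco} from the inclusion of hyperplane arrangements $\roots^H\subseteq\roots^{G/H}$ and the resulting refinement of Weyl chambers, and both deduce \eqref{part: cone with w} from the pairing $\sprod{w\beta}{y}=\sprod{\beta}{w^{-1}y}\ge 0$. The only cosmetic differences are that you spell out more carefully why $[W^{G/H}/W^H]$ is a transversal (the paper leaves this implicit in the chamber decomposition), and in \eqref{part: cone with w} you conclude via $w\beta\in\roots^{G/H,\pos}$ and Proposition \ref{prop: HW basis} whereas the paper invokes convex cone duality directly --- but these are equivalent observations.
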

\begin{proof}
Since $\roots^H\subseteq \roots^{G/H}$ it follows from \eqref{lem: all rts rest} and Lemma \ref{lem: pos rest} that every element of $\roots^{H,\pos}$ is a restriction to $A_0^+$ of an element of $\roots^{G,\pos}$. In particular, if $\beta=\alpha|_{A_0^+}\in \Delta^H$ with $\alpha\in \roots^{G,\pos}\subseteq \ccone(A_0,\Delta^G)$ then writing $\alpha$ as a positive linear combination of elements of $\Delta^G$ and restricting to $A_0^+$ shows that $\beta\in  \ccone(A_0^+,\Delta^{G/H})$. This shows part \eqref{part: roots}.

Part \eqref{part: cone inc} is straightforward from part \eqref{part: roots}. 

Recall that $\roots^H\subseteq \roots^{G/H}$ are root systems in $(\aaa_0^*)_\inv^+$. For $\lambda\in (\aaa_0^*)_\inv^+$ let 
\[
\h_\lambda=\{x\in (\aaa_0)_\inv^+:\sprod{\lambda}{x}=0\}. 
\] 
We have the Weyl chamber decomposition in the dual space
\[
(\aaa_0)_\inv^+\setminus (\cup_{\alpha\in \roots^H}\h_\alpha)=\bigsqcup_{w\in W^H} w [(\aaa_0)_\inv^+]^{\Delta^H,\pos}
\]
with respect to the root system $\roots^H$. The union is of connected components.
By Proposition \ref{prop: HW basis} we similarly have a decomposition
\[
(\aaa_0)_\inv^+\setminus (\cup_{\alpha\in \roots^{G/H}}\h_\alpha)=\bigsqcup_{w\in W^{G/H}} w [(\aaa_0)_\inv^+]^{\Delta^{G/H},\pos}
\]
with respect to the root system $\roots^{G/H}$.

 Since $\cup_{\alpha\in \roots^H}H_\alpha\subseteq \cup_{\alpha\in \roots^{G/H}}H_\alpha$, any connected component of $(\aaa_0)_\inv^+\setminus (\cup_{\alpha\in \roots^H}H_\alpha)$ is contained in a connected component of $(\aaa_0)_\inv^+\setminus (\cup_{\alpha\in \roots^{G/H}}H_\alpha)$. In particular, taking closures we have
\[
 [(\aaa_0)_\inv^+]^{\Delta^H,\wpos}=\cup_{w\in [W^{G/H} / W^H]} w^{-1}[(\aaa_0)_\inv^+]^{\Delta^{G/H},\wpos}
\]
and part \eqref{part: cone deco} follows.

Finally, for all $w\in [W^{G/H} / W^H]$, $\alpha\in \roots^{H,\pos}$ and $\lambda \in [(\aaa_0)_\inv^+]^{\Delta^{G/H},\wpos}$ we have
\[
\langle w(\alpha), \lambda \rangle = \langle \alpha , w^{-1}(\lambda) \rangle \geq 0. 
\]
Note, that $[(\aaa_0)_\inv^+]^{\Delta^{G/H},\wpos}$ and $\ccone(A_0^+,\Delta^{G/H})$ are both closed convex cones in Euclidean spaces, in the sense that they are closed under linear combinations with positive coefficients. Hence, by duality of convex cones, we have

\[ 
\ccone(A_0^+,\Delta^{G/H}) = \left\{ x \in (\aaa_0^*)_\inv^+\;:\; \sprod{\alpha}{\lambda} \geq 0\,,\;\forall \lambda\in [(\aaa_0)_\inv^+]^{\Delta^{G/H},\wpos}\right\}.
\]
The corollary follows.
\end{proof}

\begin{lemma}\label{lem: cone as union}
\begin{enumerate}
\item \label{part: rank}The dual lattices $X^*=X^*(A_0^+/A_G^+)$ and $X_*=X_*(A_0^+)/X_*(A_G^+)$ are of rank $\abs{\Delta^{G/H}}$. 
\item \label{part: dual}There exists a set $\{y_\alpha:\alpha\in \Delta^{G/H}\}\subset X_*(A_0^+)$ such that
\[
\sprod{\alpha}{y_\alpha}>0 \ \ \ \text{and}\ \ \ \sprod{\alpha}{y_\beta}=0 \ \ \ \text{for all}\ \ \ \alpha\ne\beta \text{ in }\Delta^{G/H}.
\]
\item \label{part: cone} For such a set $\{y_\alpha:\alpha\in \Delta^{G/H}\}$, let $Y$ be the subgroup of $X_*$ generated by the images of the $y_\alpha$'s and $Y^{\wpos}$ be the subset of $Y$ given by images of elements of the form $\sum_{\alpha\in \Delta^{G/H}} n_\alpha y_\alpha$ with $n_\alpha\in \Z_{\ge 0}$.

Then $Y$ is of finite index in $X_*$ and there exists a complete set of representatives $E$ for $X_*/Y$ so that we have the disjoint union
\[
X_*(A_0^+)^{\Delta^{G/H},\wpos}/X_*(A_G^+)=\bigsqcup_{e\in E} e+Y^{\wpos}.
\]
\end{enumerate}
\end{lemma}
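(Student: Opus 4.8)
The plan is to handle the three parts in sequence: \eqref{part: rank} is a dimension count identifying $X^*$ with the $\R$-span of the descendent root system, after which \eqref{part: dual} and \eqref{part: cone} reduce to linear algebra over $\Q$ together with elementary lattice combinatorics.

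For part \eqref{part: rank}, I would first note that, $X_*(\cdot)$ being exact on split tori, the quotient $X_*=X_*(A_0^+)/X_*(A_G^+)$ is torsion-free, coincides with $X_*(A_0^+/A_G^+)$, and is the $\Z$-linear dual of $X^*=X^*(A_0^+/A_G^+)$; so it is enough to prove $\operatorname{rank}X^*=\abs{\Delta^{G/H}}$, and then $X_*$ has the same rank. By Proposition \ref{prop: HW basis} the set $\Delta^{G/H}$ is a base of $\roots^{G/H}$, so the $\R$-span $V$ of $\roots^{G/H}$ inside $(\aaa_0^*)_\inv^+$ has dimension $\abs{\Delta^{G/H}}$, and it remains to show $V=X^*\otimes_\Z\R$. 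For this I would combine \eqref{lem: all rts rest} with the facts recalled in Section \ref{prelim} — that $\alpha|_{\grp A_0^+}$ corresponds to $\tfrac12(\alpha+\inv(\alpha))$ under $(\aaa_0^*)_\inv^+\simeq X^*(A_0^+)\otimes_\Z\R$, and that $\sprod{\alpha|_{\grp A_0^+}}{x}_H=\sprod{\alpha}{x}_G$ for $x\in(\aaa_0)_\inv^+$ and $\alpha\in\roots^G$ — to see that $x\in(\aaa_0)_\inv^+$ is orthogonal to $\roots^{G/H}$ if and only if $\sprod{\alpha}{x}_G=0$ for every $\alpha\in\roots^G$, i.e. if and only if $x\in\aaa_G$. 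Hence the annihilator of $V$ inside $(\aaa_0)_\inv^+$ is $\aaa_G\cap(\aaa_0)_\inv^+=\aaa_{A_G^+}$, so $V$ is the annihilator of $\aaa_{A_G^+}$ in $(\aaa_0^*)_\inv^+$, which is exactly $X^*\otimes_\Z\R$.

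Part \eqref{part: dual} is then immediate: since $\Delta^{G/H}$ is $\Q$-linearly independent in $X^*(A_0^+)\otimes_\Z\Q$, the evaluation map $X_*(A_0^+)\otimes_\Z\Q\to\Q^{\Delta^{G/H}}$ is surjective, yielding a rational ``dual family'' $\{y_\alpha'\}$ with $\sprod{\alpha}{y_\beta'}=\delta_{\alpha\beta}$; clearing denominators by a common positive integer $N$ gives $y_\alpha:=Ny_\alpha'\in X_*(A_0^+)$ with the asserted properties.

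For part \eqref{part: cone}, fix such a family, put $m_\alpha=\sprod{\alpha}{y_\alpha}>0$, and let $\bar y_\alpha$ be the image of $y_\alpha$ in $X_*$. As every $\alpha\in\Delta^{G/H}$ is trivial on $A_G^+$, the pairing $\sprod{\alpha}{\cdot}$ descends to $X_*$, and $X_*(A_G^+)\subseteq X_*(A_0^+)^{\Delta^{G/H},\wpos}$; hence the image of $X_*(A_0^+)^{\Delta^{G/H},\wpos}$ in $X_*$ is $\{\bar x\in X_*:\sprod{\alpha}{\bar x}\ge0\text{ for all }\alpha\in\Delta^{G/H}\}$. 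Pairing with $\beta\in\Delta^{G/H}$ shows that $\{\bar y_\alpha\}$ is a $\Q$-basis of $X_*\otimes_\Z\Q$ — so $Y$ has finite index in $X_*$ — and that, writing $\bar x=\sum_\alpha q_\alpha(\bar x)\bar y_\alpha$ with $q_\alpha(\bar x)\in\Q$, one has $q_\alpha(\bar x)=\sprod{\alpha}{\bar x}/m_\alpha$; thus membership of $\bar x$ in the positive cone, in $Y$, or in $Y^{\wpos}$ is detected coordinatewise on the $q_\alpha(\bar x)$. I would then take $E=\{e_c\}$ with one $e_c$ per coset $c\in X_*/Y$, characterized within $c$ by $q_\alpha(e_c)\in[0,1)$ for all $\alpha$ (such $e_c$ lies in $X_*$ since it differs from any lift of $c$ by an element of $Y$), and deduce $X_*(A_0^+)^{\Delta^{G/H},\wpos}/X_*(A_G^+)=\bigsqcup_{e\in E}(e+Y^{\wpos})$ from the splitting $q_\alpha(\bar x)=\lfloor q_\alpha(\bar x)\rfloor+\{q_\alpha(\bar x)\}$ into integer and fractional parts: the union is disjoint because distinct $e_c$ lie in distinct $Y$-cosets, and the two inclusions hold because $q_\alpha(\bar x)\ge0$ says exactly that the integer parts contribute an element of $Y^{\wpos}$ while the fractional parts reconstruct $e_c$. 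The one conceptually substantive step is part \eqref{part: rank} — keeping the pairings $\sprod{\cdot}{\cdot}_G$, $\sprod{\cdot}{\cdot}_H$ and the $\inv$-eigenspace decompositions straight while identifying $V$ with $X^*\otimes_\Z\R$; part \eqref{part: cone}, although it carries the bulk of the bookkeeping, is elementary once the $\Q$-basis $\{\bar y_\alpha\}$ is in hand.
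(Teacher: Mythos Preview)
Your proof is correct and follows essentially the same line as the paper. In part~\eqref{part: rank} you phrase the argument via duality (the annihilator of $\operatorname{span}_{\R}\roots^{G/H}$ in $(\aaa_0)_\inv^+$ is $\aaa_{A_G^+}$), while the paper computes directly that $A_G^+$ is the identity component of $\bigcap_{\beta\in\Delta^{G/H}}\ker\beta$; these are the same fact read from opposite sides of the pairing. Parts~\eqref{part: dual} and~\eqref{part: cone} are identical to the paper's proof up to language: your choice of $e_c$ via the condition $q_\alpha(e_c)\in[0,1)$ is exactly the paper's construction $e=e'+\sum_\alpha m_{e',\alpha}y_\alpha$ with $m_{e',\alpha}$ minimal such that $\sprod{\alpha}{e'}+m_{e',\alpha}\sprod{\alpha}{y_\alpha}\ge0$, since that minimality forces $q_\alpha(e)=\{q_\alpha(e')\}$.
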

\begin{proof}
By definition we have 
\[
\cap_{\beta\in \Delta^{G/H}} \ker \beta\subseteq\cap_{\alpha\in \Delta^G} \ker \alpha.
\]
Hence, since $A_G$ is the connected component of $\cap_{\alpha\in \Delta^G} \ker \alpha$, we also have that $A_G^+$ is the connected component of $\cap_{\beta\in \Delta^{G/H}} \ker \beta$. 

It follows that $\Delta^{G/H}$ embeds into $X^*$ and its image is a basis of the $\Q$-vector space $X^* \otimes_\Z \Q$. In particular part \eqref{part: rank} follows.
For each element of the dual basis (of $X_* \otimes_\Z\Q$) there is a positive integer that multiplies it into $X_*$. Choosing representatives mod $X_*(A_G^+)$ we obtain a set $\{y_\alpha:\alpha\in \Delta^{G/H}\}$ as in \eqref{part: dual}.
As its image in $X_*$ is a basis of $X_* \otimes_\Z\Q$ it follows that $Y$ is of finite index in $X_*$.

Let $E'$ be a complete set of representatives for $X_*/Y$ and let $c_\alpha=\sprod{\alpha}{y_\alpha}>0$, $\alpha\in \Delta^{G/H}$. For $e'\in E'$ let $m_{e',\alpha}\in \Z$ be minimal such that $\sprod{\alpha}{e'}+m_{e',\alpha}c_\alpha\ge 0$ and let $e=e'+\sum_{\alpha\in \Delta^{G/H}} m_{e',\alpha} y_\alpha$. Then $E=\{e: e'\in E'\}$ is still a complete set of representatives for $X_*/Y$. Note that
\[
\sprod{\alpha}{e}=\sprod{\alpha}{e'}+m_{e',\alpha}c_\alpha\ge 0
\]
hence $E\subseteq X_*(A_0^+)^{\Delta^{G/H},\wpos}$ and 
$\sprod{\alpha}{e}=\min_{x\in X_*(A_0^+)^{\Delta^{G/H},\wpos} \cap (e+Y)} \sprod{\alpha}{x}$ for all $\alpha\in \Delta^{G/H}$.
It follows that 
\[
X_*(A_0^+)^{\Delta^{G/H},\wpos}\cap ( e+ Y)=e + Y^{\wpos}
\]
and part \eqref{part: cone} follows.
\end{proof}

Let $\proj: \dnotnull \rightarrow \Delta^{G/H}$ be the surjective map defined by restriction to $A_0^+$.
\begin{lemma}\label{lem: inv st std par}
Let $I\subseteq \Delta^G$. Then $P_I$ is $\inv$-stable if and only if there exists $J\subseteq \Delta^{G/H}$ such that $I=\dnull\cup \proj^{-1}(J)$.
In particular, $P_1=P_{\dnull}$.
\end{lemma}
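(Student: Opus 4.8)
The plan is to prove both directions of the equivalence by translating the $\inv$-stability of $P_I$ into a combinatorial condition on the subset $I\subseteq\Delta^G$, and then to match that condition against subsets of the form $\dnull\cup\proj^{-1}(J)$. Throughout I will use the description of parabolic subgroups via cocharacters: recall $\grp P_I=\grp{P_G}(\lambda_I)$ where $\lambda_I\in X_*(A_0)$ satisfies $\sprod{\alpha}{\lambda_I}=0$ for $\alpha\in I$ and $\sprod{\alpha}{\lambda_I}>0$ for $\alpha\in\Delta^G\setminus I$. Since $\inv(\grp{P_G}(\lambda))=\grp{P_G}(\inv(\lambda))$, the group $\grp P_I$ is $\inv$-stable if and only if $\grp{P_G}(\inv(\lambda_I))=\grp{P_G}(\lambda_I)$, which (both being semi-standard) happens if and only if $\inv(\lambda_I)$ and $\lambda_I$ lie in the same facet of $\aaa_0$ for the root hyperplane arrangement of $\roots^G$; equivalently, $\sprod{\alpha}{\inv(\lambda_I)}$ has the same sign (zero, or positive) as $\sprod{\alpha}{\lambda_I}$ for every $\alpha\in\roots^{G,\pos}$, or just for every $\alpha\in\Delta^G$. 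Using $\inv$-invariance of $\sprod{\cdot}{\cdot}_G$, $\sprod{\alpha}{\inv(\lambda_I)}=\sprod{\inv(\alpha)}{\lambda_I}$, so $\inv$-stability of $P_I$ is equivalent to: for all $\alpha\in\Delta^G$, $\sprod{\inv(\alpha)}{\lambda_I}=0$ iff $\sprod{\alpha}{\lambda_I}=0$.

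First I would handle the easy inclusion: given $J\subseteq\Delta^{G/H}$ and $I=\dnull\cup\proj^{-1}(J)$, I must show $\inv(\lambda_I)$ vanishes on exactly the same simple roots as $\lambda_I$, namely on $I$. For $\alpha\in\dnull$ we have $\inv(\alpha)=-\alpha$ by definition of $\dnull$, so $\sprod{\inv(\alpha)}{\lambda_I}=-\sprod{\alpha}{\lambda_I}=0$. For $\alpha\in\proj^{-1}(J)$, Lemma \ref{lem: inv acts} gives $\beta\in\dnotnull$ and $x\in X_0$ with $\inv(\alpha)=\beta+x$; since $\proj(\beta)=\inv(\alpha)|_{A_0^+}=\alpha|_{A_0^+}=\proj(\alpha)\in J$ (using \eqref{eq: inv on pm}, as $x\in X_0$ restricts to $0$ on $A_0^+$), we get $\beta\in\proj^{-1}(J)\subseteq I$, and $x$ is a $\Z$-combination of $\dnull\subseteq I$; hence $\sprod{\inv(\alpha)}{\lambda_I}=\sprod{\beta}{\lambda_I}+\sprod{x}{\lambda_I}=0$. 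Conversely, if $\alpha\in\Delta^G\setminus I$ then $\alpha\in\dnotnull$ and $\proj(\alpha)\notin J$; by the same Lemma, $\inv(\alpha)=\beta+x$ with $\proj(\beta)=\proj(\alpha)\notin J$, so $\beta\notin I$ and $\sprod{\inv(\alpha)}{\lambda_I}=\sprod{\beta}{\lambda_I}+0>0$. Thus $\inv(\lambda_I)$ vanishes precisely on $I$, so $\grp{P_G}(\inv(\lambda_I))=\grp{P_G}(\lambda_I)=\grp P_I$ and $\grp P_I$ is $\inv$-stable.

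For the converse, suppose $\grp P_I$ is $\inv$-stable, so by the criterion above $\{\alpha\in\Delta^G:\sprod{\inv(\alpha)}{\lambda_I}=0\}=I$. I claim first that $\dnull\subseteq I$: if $\alpha\in\dnull$ then $\inv(\alpha)=-\alpha$, and $\sprod{\inv(\alpha)}{\lambda_I}=0$ forces $\alpha\in I$ by the displayed equality. Next set $J=\proj(I\setminus\dnull)=\proj(I\cap\dnotnull)\subseteq\Delta^{G/H}$. I must show $I=\dnull\cup\proj^{-1}(J)$; since $\dnull\subseteq I$ and $I\cap\dnotnull\subseteq\proj^{-1}(J)$ by construction, the only thing left is $\proj^{-1}(J)\subseteq I$, i.e.\ if $\alpha\in\dnotnull$ with $\proj(\alpha)\in J$ then $\alpha\in I$. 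Pick $\alpha'\in I\cap\dnotnull$ with $\proj(\alpha')=\proj(\alpha)$; then $\alpha-\alpha'$ restricts to $0$ on $A_0^+$, so by \eqref{eq: zero cond} $(\alpha-\alpha')+\inv(\alpha-\alpha')=0$, giving $\inv(\alpha)=-\alpha+\alpha'+\inv(\alpha')$. Now $\alpha'\in I$ so $\sprod{\alpha'}{\lambda_I}=0$; $\alpha'\in I$ and $\inv$-stability give $\inv(\alpha')$ vanishes on $\lambda_I$ too — more carefully, $\sprod{\inv(\alpha')}{\lambda_I}=0$ because, writing $\inv(\alpha')=\beta'+x'$ via Lemma \ref{lem: inv acts} with $\proj(\beta')=\proj(\alpha')\in J$ so $\beta'\in I$ and $x'\in X_0$ spanned by $\dnull\subseteq I$, both pair to zero with $\lambda_I$. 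Hence $\sprod{\inv(\alpha)}{\lambda_I}=-\sprod{\alpha}{\lambda_I}$. But also $\sprod{\inv(\alpha)}{\lambda_I}=\sprod{\alpha}{\inv(\lambda_I)}$ and, $\grp P_I$ being $\inv$-stable with $\inv(\lambda_I)$ in the closure of the same cone as $\lambda_I$ (both are dominant-type cocharacters for $P_I$ since $\grp{P_G}(\inv\lambda_I)=\grp P_I$), we have $\sprod{\alpha}{\inv(\lambda_I)}\ge0$; combined with $\sprod{\alpha}{\lambda_I}\ge0$ and $\sprod{\inv(\alpha)}{\lambda_I}=-\sprod{\alpha}{\lambda_I}$ this forces $\sprod{\alpha}{\lambda_I}=0$, i.e.\ $\alpha\in I$. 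Therefore $I=\dnull\cup\proj^{-1}(J)$. Finally, applying this to $P_1$: by construction $P_1=\grp{P_G}(\lambda_1)$ with $\lambda_1\in X_*(A_0^+)$, and $\grp M_1=C_G(A_0^+)$, so $\sprod{\alpha}{\lambda_1}=0$ iff $\alpha$ is trivial on $A_0^+$ iff $\alpha\in\dnull$; hence $I=\dnull$ for $P_1$, i.e.\ $P_1=P_{\dnull}$ (equivalently $J=\emptyset$).

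I expect the main obstacle to be the converse direction, specifically the sign-chasing argument showing $\proj^{-1}(J)\subseteq I$: one needs to pin down that $\sprod{\alpha}{\lambda_I}$ and $\sprod{\inv(\alpha)}{\lambda_I}$ cannot both be positive, which is exactly where $\inv$-stability of $\grp P_I$ (giving $\inv(\lambda_I)$ dominant for $P_I$, hence $\sprod{\alpha}{\inv\lambda_I}\ge0$ for $\alpha\in\roots^{G,\pos}$) must be invoked together with the antisymmetry $\inv(\alpha)|_{A_0^-}=-\alpha|_{A_0^-}$ and the fact that $\alpha,\alpha'$ agree on $A_0^+$. The cleanest packaging is probably to prove once and for all that $\grp P_I$ is $\inv$-stable iff the set $\{\alpha\in\Delta^G:\sprod{\inv(\alpha)}{\lambda_I}=0\}$ equals $I$, and then run all of the above through that reformulation; I would also double-check that Lemma \ref{lem: inv acts}'s output $\beta$ satisfies $\proj(\beta)=\proj(\alpha)$, which follows since $x\in X_0$ is a combination of elements of $\dnull$ and those restrict to zero on $A_0^+$.
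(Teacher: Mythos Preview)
Your argument is essentially correct, but it takes a genuinely different route from the paper's.  The paper exploits \cite[Lemma 3.3]{MR1215304} (already quoted earlier in the section): since $\grp P_I$ is $\inv$-stable one may choose the defining cocharacter $\lambda_I$ to lie in $X_*(A_0^+)$ from the outset, so that $\sprod{\alpha}{\lambda_I}_G=\sprod{\proj(\alpha)}{\lambda_I}_H$ for $\alpha\in\dnotnull$ and $\sprod{\alpha}{\lambda_I}_G=0$ for $\alpha\in\dnull$.  The description $I=\dnull\cup\proj^{-1}(J)$ with $J=\{\beta\in\Delta^{G/H}:\sprod{\beta}{\lambda_I}_H=0\}$ then drops out immediately, without any sign-chasing.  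For the converse the paper constructs $\mu\in X_*(A_0^+)$ with prescribed vanishing on $\Delta^{G/H}$ (using Proposition~\ref{prop: HW basis} and Lemma~\ref{lem: cone as union}) and observes $P_I=P_G(\mu)$, which is $\inv$-stable as in Lemma~\ref{lem: par of H}.  Your approach instead keeps a generic $\lambda_I$ and tracks $\inv$ on simple roots via Lemma~\ref{lem: inv acts}; this is more hands-on but self-contained once that lemma is available.

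Two small points to clean up.  First, your stated criterion ``$\sprod{\inv(\alpha)}{\lambda_I}=0$ iff $\sprod{\alpha}{\lambda_I}=0$'' is weaker than what you actually need and use: $\inv$-stability gives that $\inv(\lambda_I)$ lies in the \emph{same facet}, hence is dominant, so $\sprod{\alpha}{\inv(\lambda_I)}\ge 0$ for all $\alpha\in\Delta^G$.  You do invoke this later, so just state it up front.  (In particular your verification that $\dnull\subseteq I$ needs this nonnegativity, not merely the vanishing criterion.)  Second, in the step showing $\proj^{-1}(J)\subseteq I$, your ``more carefully'' justification that $\sprod{\inv(\alpha')}{\lambda_I}=0$ via $\inv(\alpha')=\beta'+x'$ and ``$\beta'\in I$'' is circular: that $\beta'\in I$ is exactly an instance of what you are proving.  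The correct (and shorter) reason is the one you wrote first: $\alpha'\in I$, and the criterion $I=\{\alpha:\sprod{\inv(\alpha)}{\lambda_I}=0\}$ gives $\sprod{\inv(\alpha')}{\lambda_I}=0$ directly.  Drop the elaboration.
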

\begin{remark}\label{rmk: std st par}
Since $\proj$ is surjective, the map $J\mapsto \dnull \cup \proj^{-1}(J)$ from subsets of $\Delta^{G/H}$ to subsets of $\Delta^G$ is injective. It follows from the lemma that standard, $\inv$-stable parabolic $F$-subgroups of $\grp G$ are in order preserving bijection with subsets of $\Delta^{G/H}$.
\end{remark}
\begin{proof}
Assume that $P_I$ is $\inv$-stable. Recall that by \cite[Lemma 3.3]{MR1215304} we may take $\lambda_I\in X^*(A_0^+)\subseteq (\aaa_0)_\inv^+$ so that $P_I=P_G(\lambda_I)$.
By definition $\dnull\subseteq (\aaa_0^*)_\inv^-$ and therefore, $\sprod{\alpha}{\lambda_I}_G=0$ for all $\alpha\in \dnull$. As argued in the proof of Lemma \ref{lem: pos rest}, for $\alpha\in \dnotnull$ we have $\sprod{\alpha}{\lambda_I}_G=\sprod{\proj(\alpha)}{\lambda_I}_H$. It follows that
\[
I=\{\alpha\in \Delta^G: \sprod{\alpha}{\lambda_I}_G=0\}=\dnull\cup \proj^{-1}(J),
\]
where $J=\{\beta\in \Delta^{G/H}:\sprod{\beta}{\lambda_I}_H=0\}$.

Conversely, let $J\subseteq \Delta^{G/H}$ and $I=\dnull\cup \proj^{-1}(J)$. It follows from Proposition \ref{prop: HW basis} and Lemma \ref{lem: cone as union}\eqref{part: rank} that
%The set $\Delta^{G/H}$ is a basis of $((\aaa_0^G)^*)_\inv^+$ (Proposition \ref{prop: HW basis}) and $X_*(A_0^+)$ spans its dual, $(\aaa_0)_\inv^+$. It follows that 
there exists $\mu \in X_*(A_0^+)$ such that $\sprod{\beta}{\mu}_H=0$ if $\beta\in J$ and $\sprod{\beta}{\mu}_H>0$ if $\beta\in \Delta^{G/H}\setminus J$. Arguing as above we get that
$I=\{\alpha\in \Delta^G:\sprod{\alpha}{\mu}_G=0\}$. Therefore $P_I=P_G(\mu)$. As in Lemma \ref{lem: par of H} it follows that $P_I$ is $\inv$-stable. 
\end{proof}

For a standard, $\inv$-stable parabolic $F$-subgroup $\grp P=\grp M\ltimes \grp U$ of $\grp G$ let
\[
\Delta^{G/H}(M)=\{\beta|_{A_M^+}:\beta\in \Delta^{G/H}\}\setminus \{0\}=\{\alpha|_{A_M^+}:\alpha\in \Delta^G\}\setminus \{0\}.
\]
Let $J\subseteq \Delta^{G/H}$ and $I=\dnull\cup \proj^{-1}(J)$ be such that $\grp P=\grp P_I$.
\begin{lemma}\label{lem: rest rel rts to M}
Restriction to $A_M^+$ defines a bijection between $\Delta^{G/H}\setminus J$ and $\Delta^{G/H}(M)$.
Furthermore, $\Delta^{G/H}(M)$ is linearly independent.
\end{lemma}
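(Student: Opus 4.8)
The plan is to translate the combinatorial statement about simple roots into a statement about the parabolic $\grp{P_I}$ and its semi-standard Levi $\grp M$, using the description $\grp P = \grp P_I = \grp P_G(\mu)$ for a suitable cocharacter $\mu \in X_*(A_0^+)$ provided by Lemma \ref{lem: inv st std par} and its proof. Recall from there that $\mu$ can be chosen so that $\langle \beta, \mu\rangle_H = 0$ for $\beta \in J$ and $\langle\beta,\mu\rangle_H > 0$ for $\beta \in \Delta^{G/H}\setminus J$, and that $I = \{\alpha\in\Delta^G : \langle\alpha,\mu\rangle_G = 0\}$. Since $\grp M = \grp M(\mu)$ is the centralizer of the image of $\mu$, and $\mu \in X_*(A_0^+)$, the torus $\grp A_M^+$ (the maximal split subtorus of $\grp A_M^\inv$) is the relevant object: an element $\beta \in X^*(A_0^+)$ restricts trivially to $A_M^+$ precisely when $\beta$ lies in the span of $I\cap\roots^{H}$-type roots, i.e., precisely when $\langle\beta,\mu\rangle_H = 0$. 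This is what identifies the kernel of the restriction map $X^*(A_0^+)\to X^*(A_M^+)$ on the set $\Delta^{G/H}$ with exactly $J$.

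First I would make precise that $\grp A_M^+$ is the connected component of $\cap_{\beta\in J}\ker\beta$ inside $\grp A_0^+$ — this is the relative analogue of the fact (used in section \ref{notation}) that $\grp A_I$ is the connected component of $\cap_{\alpha\in I}\ker\alpha$. Indeed, $\grp A_M = \grp A_{M_I}$ is the connected component of $\cap_{\alpha\in I}\ker\alpha$ in $\grp A_0$, and intersecting with the $\inv$-fixed points and passing to the maximal split subtorus gives $\grp A_M^+$; since $I = \dnull\cup\proj^{-1}(J)$ and the elements of $\dnull$ already vanish on $A_0^+$, the conditions $\alpha|_{A_M^+} = 0$ for $\alpha\in I$ reduce to $\beta|_{A_M^+} = 0$ for $\beta\in J$. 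Granting this, for $\beta\in\Delta^{G/H}$ the restriction $\beta|_{A_M^+}$ is zero iff $\beta\in J$, which gives that restriction to $A_M^+$ induces a well-defined surjection $\Delta^{G/H}\setminus J \to \Delta^{G/H}(M)$.

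For injectivity and linear independence I would argue as in Proposition \ref{prop: HW basis} and Lemma \ref{lem: cone as union}\eqref{part: rank}: the lattice $X^*(A_M^+/A_G^+)$ has rank $|\Delta^{G/H}(M)|$ by the same connected-component argument applied to $\cap_{\beta\in\Delta^{G/H}(M)}\ker\beta$, and on the other hand $\Delta^{G/H}$ is a basis of $X^*(A_0^+/A_G^+)\otimes_\Z\Q$, so its image under the surjection $X^*(A_0^+/A_G^+)\otimes\Q \to X^*(A_M^+/A_G^+)\otimes\Q$ — which kills exactly the span of $J$ — is a spanning set of the target of size $|\Delta^{G/H}\setminus J| = |\Delta^{G/H}| - |J|$. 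Comparing this with $\mathrm{rk}\, X^*(A_M^+/A_G^+) = |\Delta^{G/H}(M)|$ forces the restriction map on $\Delta^{G/H}\setminus J$ to be injective with linearly independent image, which is exactly both assertions of the lemma.

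The main obstacle — really the only non-routine point — is the precise identification of $\grp A_M^+$ as the connected component of $\cap_{\beta\in J}\ker\beta$ in $\grp A_0^+$, i.e., checking that taking maximal split subtori of $\inv$-fixed points commutes appropriately with the operations of intersecting kernels of characters and passing to connected components. This requires a small argument with the decomposition $\aaa_0 = (\aaa_0)_\inv^+\oplus(\aaa_0)_\inv^-$ and the fact (established just before Lemma \ref{lem: par of H}) that $\grp M = \grp M(\mu)$ for $\mu\in X_*(A_0^+)$, so that the split center of $\grp M$ decomposes compatibly with $\inv$; once this is in hand everything else is a rank count borrowed verbatim from the earlier lemmas.
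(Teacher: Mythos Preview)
Your approach is correct and genuinely different from the paper's. The paper argues directly at the level of $\Delta^G$: given a linear relation $\sum x_i\beta_i|_{A_M^+}=0$ with $\beta_i\in\Delta^{G/H}\setminus J$, it lifts each $\beta_i$ to some $\alpha_i\in\dnotnull\setminus I$, sets $\gamma=\sum x_i\alpha_i$, observes that $(\gamma+\inv(\gamma))|_{A_M}=0$ (hence $\gamma+\inv(\gamma)$ lies in the span of $I$), and then invokes Lemma~\ref{lem: inv acts} to write $\inv(\alpha_i)=\alpha_i'+x_i'$ with $\alpha_i'\in\dnotnull\setminus I$ and $x_i'\in X_0\subseteq\mathrm{span}(I)$; linear independence of $\Delta^G$ then forces all $x_i=0$. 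Your route instead establishes the torus identity $A_M^+=(\cap_{\beta\in J}\ker\beta)^\circ$ inside $A_0^+$ and reads everything off from the resulting rank computation: once the kernel of $X^*(A_0^+/A_G^+)\otimes\Q\to X^*(A_M^+/A_G^+)\otimes\Q$ is identified with $\mathrm{span}(J)$, the images of $\Delta^{G/H}\setminus J$ form a spanning family of size equal to the dimension of the target, hence a basis. Your argument is softer and avoids Lemma~\ref{lem: inv acts}; the paper's is more hands-on but entirely self-contained within the root combinatorics already set up.

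One small wrinkle: your assertion that $\mathrm{rk}\,X^*(A_M^+/A_G^+)=|\Delta^{G/H}(M)|$ ``by the same connected-component argument'' is circular as stated---the analogue of Lemma~\ref{lem: cone as union}\eqref{part: rank} needs linear independence of $\Delta^{G/H}(M)$ as input, which is exactly what you are proving. Fortunately you do not actually need this: the clause ``which kills exactly the span of $J$'' already gives $\mathrm{rk}\,X^*(A_M^+/A_G^+)=|\Delta^{G/H}|-|J|=|\Delta^{G/H}\setminus J|$ directly, and that is all your dimension count requires. Drop the appeal to $|\Delta^{G/H}(M)|$ and the argument is clean. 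Your ``main obstacle'' is indeed routine: $A_M^+\subseteq\cap_{\beta\in J}\ker\beta$ is immediate from $\proj^{-1}(J)\subseteq I$, and conversely the connected component $T$ of the right-hand side lies in $\cap_{\alpha\in I}\ker\alpha$ (since $\dnull$ vanishes on $A_0^+$ and $\proj^{-1}(J)$ restricts to $J$), hence in $A_M$, and being a connected subtorus of $A_0^+\subseteq H$ it lies in $A_M^+$.
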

\begin{proof}
Recall that 
\[
I=\Delta^M=\{\alpha\in \Delta^G:\alpha|_{A_M}=0\}. 
\]
Therefore  
\[
\Delta^{G/H}(M)=\{\beta|_{A_M^+}: \beta\in\Delta^{G/H}\setminus J\} \setminus \{0\}.
\]
%is the set of non-zero restrictions to $A_M^+$ of elements of $\Delta^{G/H}\setminus J$.
Let $\Delta^{G/H}\setminus J=\{\beta_1,\dots,\beta_t\}$.
To conclude the lemma it is enough to show that for $x_1,\dots,x_t\in \R$ we have, if $x_1\beta_1+\dots+x_t\beta_t$ is trivial on $A_M^+$ then $x_i=0$ for all $i=1,\dots,t$.

If $\beta\in \Delta^{G/H}\setminus J$ then $\beta=\alpha|_{A_0^+}$ for some $\alpha\in \dnotnull\setminus I$.
Assume that $\sum_{i=1}^t x_i \beta_i|_{A_M^+}=0$.
Let $\alpha_i\in \dnotnull\setminus I$ be such that $\alpha_i|_{A_0^+}=\beta_i$ and let $\gamma=\sum_{i=1}^t a_i\alpha_i$.
Then $\gamma|_{A_M^+}=0$ and therefore by a standard argument that we already applied we have $(\gamma+\inv(\gamma))|_{A_M}=0$. Therefore, $\gamma+\inv(\gamma)$ is a linear combination of elements of $I=\Delta^M$.
On the other hand, let $\alpha_i'\in \dnotnull$ be given by Lemma \ref{lem: inv acts} so that $\inv(\alpha_i)-\alpha_i'\in X_0$. Since $\alpha_i$, $\inv(\alpha_i)$ and $\alpha_i'$ coincide on $A_0^+$, it follows that $\alpha_i'$ is not trivial on $A_M$ and therefore $\alpha_i'\in \Delta^G\setminus I$. Since $\dnull\subseteq I$, every element of $X_0$ is a linear combination of elements of $I$. It follows that $\sum_{i=1}^t x_i (\alpha_i+\alpha_i')$ is in the span of $I$. Arguing as in the proof of Proposition \ref{prop: HW basis}, by the linear independence of $\Delta^G$ it follows that $x_i=0$ for all $i$ and the lemma follows.  
\end{proof}

We call $\cone(A_M^+,\Delta^{G/H}(M))$ the cone of \emph{relatively positive} elements in $(\aaa_M^*)_\inv^+$.
Recall that 
\[
\aaa_0^*=(\aaa_0^M)^*\oplus (\aaa_M^*)_\inv^+\oplus  (\aaa_M^*)_\inv^-. 
\]
\begin{definition}\label{def: M pos}
An element $\lambda\in \aaa_0^*$ is called $M$-relatively positive (resp.~weakly positive) if its projection $(\lambda_M)_\inv^+$ to $(\aaa_M)_\inv^+$ is in $\cone(A_M^+,\Delta^{G/H}(M))$ (resp. $\ccone(A_M^+,\Delta^{G/H}(M))$).
\end{definition}
\begin{corollary}\label{cor: rel rest M}
With the above notation we have
\[
\Delta^{G/H}(M)=\{\alpha|_{A_M^+}:\alpha\in \Delta_M\}\setminus \{0\}.
\]
Thus, any $\lambda\in \cone(A_M,\Delta_M)$ is $M$-relatively positive and any $\lambda\in \ccone(A_M,\Delta_M)$ is $M$-relatively weakly positive.
\end{corollary}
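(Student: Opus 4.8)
The plan is to establish the displayed identity first, and then read off the two ``relatively positive'' consequences by pushing everything through the $\inv$-eigenspace projection $\lambda\mapsto\lambda_\inv^+$.

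For the identity, I would use the simple fact that, since $A_M^+\subseteq A_M$, restriction to $A_M^+$ factors as restriction to $A_M$ followed by restriction to $A_M^+$; thus $\alpha|_{A_M^+}=(\alpha|_{A_M})|_{A_M^+}$ for every $\alpha\in\Delta^G$. In particular, if $\alpha\in\Delta^G$ satisfies $\alpha|_{A_M}=0$ then already $\alpha|_{A_M^+}=0$, so exactly those $\alpha$ that get discarded when forming $\Delta_M$ contribute nothing to the non-zero restrictions to $A_M^+$. Hence
\[
\Delta^{G/H}(M)=\{\alpha|_{A_M^+}:\alpha\in\Delta^G\}\setminus\{0\}=\{\gamma|_{A_M^+}:\gamma\in\Delta_M\}\setminus\{0\},
\]
which is the asserted equality. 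I record the byproduct that the restriction map $\gamma\mapsto\gamma|_{A_M^+}$ carries $\Delta_M$ \emph{onto} $\Delta^{G/H}(M)\cup\{0\}$, so every element of $\Delta^{G/H}(M)$ is the restriction of at least one $\gamma\in\Delta_M$.

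For the ``Thus'' part, I would first note that, under the identification $(\aaa_M^*)_\inv^+\simeq X^*(A_M^+)\otimes_\Z\R$ recalled before Definition~\ref{def: M pos}, the projection $\lambda\mapsto\lambda_\inv^+$ of $\aaa_M^*$ onto $(\aaa_M^*)_\inv^+$ is precisely ``restriction to $A_M^+$'': it is transpose to the inclusion $(\aaa_M)_\inv^+=\aaa_{A_M^+}\hookrightarrow\aaa_M$, whose kernel is the annihilator $(\aaa_M^*)_\inv^-$, and on $X^*(A_M)$ it is the map $x\mapsto x|_{A_M^+}=\tfrac12(x+\inv(x))$ (cf.\ \eqref{eq: inv on pm}). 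Now take $\lambda=\sum_{\gamma\in\Delta_M}a_\gamma\gamma\in\cone(A_M,\Delta_M)$ with all $a_\gamma>0$. Since $\lambda\in\aaa_M^*$ we have $\lambda_M=\lambda$, so $(\lambda_M)_\inv^+=\sum_{\gamma\in\Delta_M}a_\gamma(\gamma|_{A_M^+})$; dropping the zero summands and grouping the rest according to their image $\delta\in\Delta^{G/H}(M)$ rewrites this as $\sum_{\delta\in\Delta^{G/H}(M)}b_\delta\,\delta$ with $b_\delta=\sum_{\gamma|_{A_M^+}=\delta}a_\gamma$. By the surjectivity just recorded, each of these index sets is non-empty, whence $b_\delta>0$ for all $\delta$ and $(\lambda_M)_\inv^+\in\cone(A_M^+,\Delta^{G/H}(M))$; that is, $\lambda$ is $M$-relatively positive. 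The weakly positive assertion then follows by continuity of the linear map $\lambda\mapsto\lambda_\inv^+$: it sends the dense subset $\cone(A_M,\Delta_M)$ of $\ccone(A_M,\Delta_M)$ into $\cone(A_M^+,\Delta^{G/H}(M))$, hence sends $\ccone(A_M,\Delta_M)$ into the closed cone $\ccone(A_M^+,\Delta^{G/H}(M))$.

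I do not expect a real obstacle: the statement is linear bookkeeping resting on the one harmless point that a simple root trivial on $A_M$ is automatically trivial on $A_M^+$, together with the already-available identification of ``restriction to $A_M^+$'' with the $\inv$-projection. The only place that calls for slight care is checking that the collected coefficients $b_\delta$ come out \emph{strictly} positive — which is precisely the reason for recording that $\Delta_M\to\Delta^{G/H}(M)$ is surjective.
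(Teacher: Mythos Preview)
Your proof is correct. For the displayed identity you take a shorter route than the paper: you use directly the second description $\Delta^{G/H}(M)=\{\alpha|_{A_M^+}:\alpha\in\Delta^G\}\setminus\{0\}$ and the factorization $\alpha|_{A_M^+}=(\alpha|_{A_M})|_{A_M^+}$, which collapses both inclusions into one line. The paper instead establishes each inclusion separately, invoking Lemma~\ref{lem: rest rel rts to M} to represent elements of $\Delta^{G/H}(M)$ via $\Delta^{G/H}\setminus J$ before lifting to $\Delta^G\setminus I$ and projecting to $\Delta_M$. Your argument is more economical and makes transparent why surjectivity of $\Delta_M\to\Delta^{G/H}(M)\cup\{0\}$ holds; the paper's version, on the other hand, keeps the bijection with $\Delta^{G/H}\setminus J$ explicit, which is what gets used later in Proposition~\ref{prop: conv sum H}. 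For the ``Thus'' part, the paper simply declares it straightforward; your spelled-out verification via the identification of $\lambda\mapsto\lambda_\inv^+$ with restriction to $A_M^+$ and the strict positivity of the grouped coefficients $b_\delta$ is exactly the intended argument.
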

\begin{proof}
It follows from Lemma \ref{lem: rest rel rts to M} that every element of $\Delta^{G/H}(M)$ is of the form $\beta|_{A_M^+}$ for some $\beta\in \Delta^{G/H}\setminus J$. Let $\alpha\in \Delta^G$ be such that $\alpha|_{A_0^+}=\beta$. Then $\alpha\not\in I$ and therefore $\alpha|_{A_M}\ne 0$, i.e., $\gamma:=\alpha|_{A_M}\in \Delta_M$ is such that $\gamma|_{A_M^+}=\beta|_{A_M^+}$. Conversely, if $\beta\in \Delta_M$ is such that $\beta|_{A_M^+}\ne 0$ then $\beta=\alpha|_{A_M}$ for some $\alpha\in \dnotnull$. Thus, $\gamma:=\alpha|_{A_0^+}\in \Delta^{G/H}$ is such that $\beta|_{A_M^+}=\gamma|_{A_M^+}$ and therefore $\beta|_{A_M^+}\in \Delta^{G/H}(M)$. The rest of the corollary is now straightforward.
\end{proof}
\section{$H$-integrability}\label{Hint}
In what follows we apply Lemma \ref{lem: vol cart} to $H^\circ$ with respect to the minimal parabolic subgroup $P_0^H$ and the maximal $F$-split torus $A_0^+$. Write $P_0^H=M_0^H \ltimes U_0^H$ where $M_0^H$ is the centralizer in $H^\circ$ of $A_0^+$ and therefore $M_0^H\subseteq M_1^\inv$.
Let
\[
C_0^{H,\wpos}=C_{A_0^+}^{\Delta^H,\wpos}.
\]

Choose a finite subset $F_0^H$ of $M_0^H$ in such a way that 
\[
H^\circ=\bigsqcup_{f\in F_0^H}\bigsqcup_{c\in C_0^{H,\wpos}}K^{H^\circ} fc K^{H^\circ}
\]
holds. We further insure that $F_0^H$ is such that Lemma \ref{lem: vol cart} holds for $H^\circ$ with $\mathcal{I}^H$ as a basis of open normal subgroups of $K^{H^\circ}$.

For a subset $X$ of $C_{A_0^+}$ let $[X]$ be its image under the projection to $C_{A_0^+}/C_{A_G^+}$.

Let $C^\infty(A_G^+\bs G)$ be the space of functions $\phi:G\rightarrow \C$ such that $\phi(ag)=\phi(g)$, $g\in G$, $a\in A_G^+$ and there exists an open subgroup $K_0$ of $G$ such that $\phi$ is bi-$K_0$-invariant.

\begin{proposition}\label{prop: H-int as sum}
Let $\phi\in C^\infty(A_G^+\bs G)$. Then the following conditions are equivalent:
\begin{enumerate}

\item \label{cond: H-int conv} $\int_{A_G^+\bs H} \abs{\phi(h)}\ dh<\infty$;

\item \label{cond: sum conv} $\sum_{s\in [C_0^{H,\wpos}]} \delta^{-1}_{P_0^H}(s)\abs{\phi(h_1sh_2)}<\infty$ for all $h_1,\,h_2\in H$.
\end{enumerate}
\end{proposition}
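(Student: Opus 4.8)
The plan is to use the Cartan decomposition of $H^\circ$ recalled above, together with the volume formula of Lemma~\ref{lem: vol cart} applied to $H^\circ$ (with respect to $P_0^H$ and $A_0^+$), to rewrite the integral in \eqref{cond: H-int conv} as a weighted lattice sum whose weights are exactly the numbers $\delta_{P_0^H}^{-1}$. First I would record two elementary points that make the sum in \eqref{cond: sum conv} well posed: since $A_G^+$ lies in the centre of $G$, the character $\delta_{P_0^H}$ is trivial on $C_{A_G^+}$ and $\phi$ is invariant under left and right translation by $C_{A_G^+}$, so both $\delta_{P_0^H}^{-1}(s)$ and $\phi(h_1 s h_2)$ depend only on the coset $s$; also $C_{A_G^+}\subseteq C_0^{H,\wpos}$, so $[C_0^{H,\wpos}]$ really is a set of cosets of $C_{A_G^+}$ in $C_{A_0^+}$.

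The core step is an identity obtained by unfolding. Fix $\phi$, restrict it to $H^\circ$, and choose $K_0\in\mathcal{I}^H$ small enough that $\phi|_{H^\circ}$ is bi-$K_0$-invariant; split each Cartan double coset $K^{H^\circ}fcK^{H^\circ}$ into the finitely many $K_0$-double cosets it contains. Since $K_0$ is \emph{normal} in $K^{H^\circ}$ and $H^\circ$ is unimodular, each of these pieces has the same volume, namely $\delta_{P_0^H}^{-1}(fc)\vol(K_0)$ by Lemma~\ref{lem: vol cart}, and on each $\phi$ is constant. Collecting the central translates by $C_{A_G^+}$ and integrating over $A_G^+\bs H^\circ$ then gives an identity of the form
\[
\int_{A_G^+\bs H^\circ}\abs{\phi(h)}\,dh=\kappa\,\vol(K_0)\sum_{f\in F_0^H}\,\sum_{s\in[C_0^{H,\wpos}]}\delta_{P_0^H}^{-1}(f)\,\delta_{P_0^H}^{-1}(s)\sum_{(k_1,k_2)\in R_{f,s}}\abs{\phi(k_1 f c_s k_2)},
\]
where $c_s\in C_0^{H,\wpos}$ is a representative of $s$, $\kappa=\vol(A_G^+/C_{A_G^+})^{-1}>0$ depends only on the measure normalisations, and $R_{f,s}$ is a subset of the fixed finite set $K^{H^\circ}/K_0\times K_0\bs K^{H^\circ}$.

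For \eqref{cond: sum conv}$\Rightarrow$\eqref{cond: H-int conv}, write $H=\bigsqcup_j H^\circ\eta_j$ (finite, as $H^\circ$ has finite index), so $\int_{A_G^+\bs H}\abs{\phi}=\sum_j\int_{A_G^+\bs H^\circ}\abs{\phi(\cdot\,\eta_j)}$; apply the identity above to each $\phi(\cdot\,\eta_j)$, bound $\delta_{P_0^H}^{-1}(f)$ by a constant over the finite set $F_0^H$, and enlarge the $(k_1,k_2)$-range to the full set $K^{H^\circ}/K_0\times K_0\bs K^{H^\circ}$. This exhibits each $\int_{A_G^+\bs H^\circ}\abs{\phi(\cdot\,\eta_j)}$ as a finite sum of series $\sum_{s}\delta_{P_0^H}^{-1}(s)\abs{\phi(h_1 s h_2)}$ with $h_1=k_1 f\in H$ and $h_2=k_2\eta_j\in H$ fixed, each finite by \eqref{cond: sum conv}. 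Conversely, for \eqref{cond: H-int conv}$\Rightarrow$\eqref{cond: sum conv}, fix $h_1,h_2\in H$ and any $f_0\in F_0^H$, and set $\phi'=\phi(h_1 f_0^{-1}(\cdot)h_2)\in C^\infty(A_G^+\bs G)$; its integral over $A_G^+\bs H$ is a positive multiple of that of $\phi$ (left $H$-translation preserves Haar measure on $A_G^+\bs H$, right translation by the fixed $h_2$ only rescales it), hence finite, so $\int_{A_G^+\bs H^\circ}\abs{\phi'}<\infty$. Every summand in the identity applied to $\phi'$ is non-negative, so keeping only the term with $f=f_0$ and $(k_1,k_2)=(e,e)$ gives $\sum_{s}\delta_{P_0^H}^{-1}(s)\abs{\phi'(f_0 c_s)}<\infty$; since $\phi'(f_0 c_s)=\phi(h_1 f_0^{-1}f_0 c_s h_2)=\phi(h_1 s h_2)$, this is \eqref{cond: sum conv}.

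I expect the main difficulty to be bookkeeping rather than any single estimate: the series in \eqref{cond: sum conv} carries no copy of $F_0^H$, no $K_0$-refinement, and is allowed to be tested against arbitrary $h_1,h_2\in H$, while the Cartan decomposition of $H^\circ$ naturally produces a sum over $F_0^H\times[C_0^{H,\wpos}]$ with only bi-$K_0$-invariance and only over $H^\circ$. Matching the two — in particular making sure that allowing $h_1,h_2$ to range over all of $H$ is exactly enough to absorb $f$, $k_1$, $k_2$ and the passage from $H$ to $H^\circ$ — is the heart of the argument; the two structural facts that make it go through are the normality of the members of $\mathcal{I}^H$ in $K^{H^\circ}$ (so every sub-piece of a Cartan double coset has equal volume and Lemma~\ref{lem: vol cart} applies verbatim) and the centrality of $A_G^+$ (so all quantities descend to $C_{A_G^+}$-cosets).
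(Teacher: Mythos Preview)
Your argument is correct and follows essentially the same route as the paper: Cartan decomposition of $H^\circ$, refinement into $K_0$-double cosets using normality of $K_0$ in $K^{H^\circ}$, and the volume formula of Lemma~\ref{lem: vol cart}. The only cosmetic differences are that the paper works with inequalities throughout (overcounting the $K_0$-pieces rather than introducing your index sets $R_{f,s}$), and for \eqref{cond: H-int conv}$\Rightarrow$\eqref{cond: sum conv} the paper integrates $|\phi|$ directly over the subset $h_1\bigl(\cup_s K_0 s K_0\bigr)h_2$ instead of introducing your translate $\phi'$; one small correction is that $H$ is unimodular, so right translation by $h_2$ does not rescale the Haar measure at all.
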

\begin{proof}
Since 
$C_{A_G^+}$ is cocompact in $A_G^+$,
condition \eqref{cond: H-int conv} holds if and only if $\int_{C_{A_G^+}\bs H} \abs{\phi(h)}\ dh<\infty$. Let $D$ be a (finite) set of representatives for $H/H^\circ$ and let
$K_0\in \mathcal{I}^H$ be such that $\phi(d \cdot)$ is bi-$K_0$-invariant for all $d\in D$. Let $E$ be a (finite) set of representatives for $K^{H^\circ}/K_0$. Then $K^{H^\circ} fc K^{H^\circ}=\cup_{e_1,\,e_2\in E} K_0 e_1fc e_2 K_0$ for all $f\in F_0^H$ and $c\in C_0^{H,\wpos}$. Hence
\[
H=\bigsqcup_{d\in D}\bigsqcup_{f\in F_0^H}\bigsqcup_{c\in C_0^{H,\wpos}}d K^{H^\circ} fc K^{H^\circ}
\]
and therefore
\begin{multline*}
\int_{C_{A_G^+}\bs H} \abs{\phi(h)}\ dh\le \sum_{d\in D}\sum_{f\in F_0^H} \sum_{e_1,\,e_2\in E} \sum_{s\in [C_0^{H,\wpos}]}\int_{K_0 e_1fs e_2 K_0} \abs{\phi(dh)}\ dh=\\
\sum_{d\in D}\sum_{f\in F_0^H} \sum_{e_1,\,e_2\in E} \sum_{s\in [C_0^{H,\wpos}]}\abs{\phi(de_1fs e_2)}\vol(K_0 e_1fs e_2 K_0).
\end{multline*}
Note further that
\[
\vol(K_0 e_1fs e_2 K_0)=\vol(e_1K_0 fs  K_0e_2)=\vol(K_0 fs  K_0)=\delta^{-1}_{P_0^H}(fs)\vol(K_0)
\]
where the identities follow respectively by the normality of $K_0$ in $K^{H^\circ}$, the invariance of the Haar measure on $H$ and Lemma \ref{lem: vol cart}.
Thus,
\[
\int_{C_{A_G^+}\bs H} \abs{\phi(h)}\ dh\le \vol(K_0)\sum_{d\in D}\sum_{f\in F_0^H} \delta^{-1}_{P_0^H}(f)\sum_{e_1,\,e_2\in E} \sum_{s\in [C_0^{H,\wpos}]}\delta^{-1}_{P_0^H}(s)\abs{\phi(de_1se_2)}.
\]
Since the sums over $d,\,f,\,e_1,\,e_2$ are finite clearly \eqref{cond: sum conv} implies \eqref{cond: H-int conv}.
Similarly, if 
\[
X=\cup_{s\in [C_0^{H,\wpos}]} K_0 s K_0
\]
then
\[
\vol(K_0)\sum_{s\in [C_0^{H,\wpos}]} \delta^{-1}_{P_0^H}(s)\abs{\phi(h_1sh_2)}=\int_{C_{A_G^+}\bs h_1 Xh_2} \abs{\phi(h)}\ dh\le \int_{C_{A_G^+}\bs H} \abs{\phi(h)}\ dh
\]
and therefore \eqref{cond: H-int conv} implies \eqref{cond: sum conv}.
\end{proof}

\subsection{Exponents}\label{exp}
Let $(\pi, V)$ be an admissible, smooth (complex valued) representation of $G$. For a parabolic subgroup $P=M\ltimes U$ of $G$, let $(r_P(\pi), r_P(V))$ denote the normalized Jacquet module of $\pi$ with respect to $P$ (see e.g. \cite{MR0579172}). It is an admissible representation of $M$. We say that a character $\chi$ of $A_M$ is an \emph{exponent} of $\pi$ along $P$, if it is an $A_M$-eigenvalue on $r_P(V)$, i.e., there exists $0\neq v\in r_P(V)$ such that $r_P(\pi)(a)v= \chi(a)v$, $a\in A_M$.
See \cite[VII.1.]{MR2567785} for a more detailed discussion of this definition.
%for some compact open subgroup $K_M<M$, $A_M$ has a non-zero generalized weightspace related to $\chi$ in its action on $r_P(V)^{K_M}$. That is to say that there is a vector $0\neq v\in r_P(V)^{K_M}$ in this finite-dimensional space and a positive power $d$, for which $(r_P(\pi)(x)-\chi(x))^dv=0$ holds, for all $x\in A_M$.

If $\pi$ is of finite length then so is $r_P(\pi)$. In this case, the exponents are the restrictions to $A_M$ of the central characters of the irreducible components in a decomposition series for $r_P(\pi)$.
%In case $\pi$ is of finite length, the exponents have a somewhat more convenient description. The representation $r_P(\pi)$, in this case, admits a finite decomposition series. Every irreducible component admits a central character. The exponents of $\pi$ along $P$ are those characters by which $A_M$ acts on the irreducible components of $r_P(\pi)$.

Let $\Exp_P(\pi)$ denote the set of all exponents of $\pi$ along $P$. 

\subsection{The Casselman pairing}
Let $\pi$ be an admissible representation of $G$ and let $\tilde\pi$ be its contragredient. For $v\in \pi$ and $\tilde v\in \tilde\pi$ the function
\[
\mc_{v,\tilde v}(g)=\tilde v(\pi(g) v) ,\ g\in G
\] 
is called a matrix coefficient of $\pi$. Let $\MC(\pi)$ be the space of all matrix coefficients of $\pi$. In his unpublished notes, Casselman developed a tool to study the asymptotics of matrix coefficients of $\pi$ in terms of matrix coefficients of Jacquet modules of $\pi$ \cite[\S 4]{CassNotes}.
We recall the results relevant to us. 

Let $\grp P=\grp M \ltimes \grp U$ be a standard parabolic subgroup of $G$ and let $\grp P^-$ be the opposite parabolic. Casselman defined an $M$-invariant pairing on $r_P(\pi) \times r_{P^-}(\pi)$ that identifies $r_{P^-}(\pi)$ as the contragredient of $r_P(\pi)$ (see e.g. \cite[VI.9.6.2]{MR2567785}). Let $v_P$ denote the projection of $v\in \pi$ to $r_P(\pi)$. It follows that for $v\in V$ and $\tilde v\in \Tilde \pi$ we have $\mc_{v_P,\tilde v_{P^-}}\in \MC(r_P(\pi))$. Moreover (see e.g. \cite[VI.9.6.5]{MR2567785}),
there exists $\epsilon>0$ such that 
\begin{equation}\label{eq: cass}
c_{v,\tilde v}(a)=\delta_P^{1/2}(a) \,c_{v_P,\tilde v_{P^-}}(a),\ a\in C_{A_M}^{\pos}(\epsilon).
\end{equation}

\subsection{A relative convergence criterion}
Let 
\[
\rho_0^G=\Re(\delta_{P_0}^{1/2})\in(\aaa_0^G)^* 
\]
and $\rho_M^G=(\rho_0^G)_M\in (\aaa_M^G)^*$ its projection with respect to a standard Levi subgroup $M$ of $G$. Note that if $P=M\ltimes U$ is a standard, $\inv$-stable parabolic subgroup of $G$ then $(\rho_M^G)_\inv^+=\Re(\delta_P^{1/2}|_{A_M^+})$.

\begin{proposition}\label{prop: conv sum H}
Let $\pi$ be an admissible representation of $G$ so that $A_G^+$ acts on $\pi$ as a unitary character and let $\omega$ be a character of $A_0^+/A_G^+$. The following are equivalent.
\begin{enumerate}
\item \label{cond: conv}For every $\mc\in \MC(\pi)$ we have
\[
\sum_{s\in [C_{A_0^+}^{\Delta^{G/H},\wpos}]} \abs{\mc(s)\omega(s)}<\infty;
\]

\item \label{cond: exp}For every standard, $\inv$-stable parabolic $F$-subgroup $\grp P=\grp M\ltimes \grp U$ of $\grp G$ and for every $\chi\in \Exp_P(\pi)$ we have $\Re(\chi)+\Re( \omega)+\rho_0^G$ is $M$-relatively positive. 
\end{enumerate}
\end{proposition}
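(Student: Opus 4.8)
The plan is to run a relative version of Casselman's proof of the square-integrability criterion, with the descendent root system $\roots^{G/H}$ playing the role of $\roots^G$; the structural results of Section \ref{prelim} are exactly what is needed to carry out the substitution. First, since $A_G^+$ acts on $\pi$ through a unitary character, $\abs{\mc}$ descends to $A_G^+\bs G$, so the sum in \eqref{cond: conv} over $[C_{A_0^+}^{\Delta^{G/H},\wpos}]$ makes sense. I would fix $\{y_\alpha:\alpha\in\Delta^{G/H}\}\subset X_*(A_0^+)$ and the set $E$ as in Lemma \ref{lem: cone as union}, and transport the decomposition in Lemma \ref{lem: cone as union}\eqref{part: cone} through $x\mapsto x(\varpi)$; since a series of non-negative terms over a disjoint union converges iff each block does, and since $g\mapsto\mc(e(\varpi)g)$ is again a matrix coefficient of $\pi$ while the $e(\varpi)$-translate of $\abs\omega$ differs from $\abs\omega$ by a constant factor, condition \eqref{cond: conv} becomes: $\sum_{n\in\Z_{\ge0}^{\Delta^{G/H}}}\abs{\mc(y(\varpi)^n)\,\omega(y(\varpi)^n)}<\infty$ for every $\mc\in\MC(\pi)$, where $y(\varpi)^n:=\prod_\alpha y_\alpha(\varpi)^{n_\alpha}$. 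One may also assume $\pi$ has finite length, as any single matrix coefficient and any single exponent already occurs inside a finitely generated (hence finite length) subrepresentation.

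Next I would stratify the monoid by $\inv$-stable parabolics. For $J\subseteq\Delta^{G/H}$ let $\grp P_J=\grp M_J\ltimes\grp U_J$ be the standard $\inv$-stable parabolic attached to $J$ by Lemma \ref{lem: inv st std par}, and for a large integer $N$ split $\Z_{\ge0}^{\Delta^{G/H}}=\bigsqcup_J S_J(N)$ with $S_J(N)=\{n:n_\alpha\le N\ (\alpha\in J),\ n_\alpha>N\ (\alpha\notin J)\}$. For $n\in S_J(N)$ write $y(\varpi)^n=t_0\,t^J$ with $t_0=\prod_{\alpha\in J}y_\alpha(\varpi)^{n_\alpha}$, ranging over a fixed finite subset of $A_0^+$, and $t^J=\prod_{\alpha\notin J}y_\alpha(\varpi)^{n_\alpha}$. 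Using the duality $\sprod{\alpha}{y_\alpha}>0$, $\sprod{\alpha}{y_\beta}=0$ for $\alpha\ne\beta$ in $\Delta^{G/H}$ (Lemma \ref{lem: cone as union}\eqref{part: dual}), together with \eqref{lem: all rts rest} and the description $\Delta^{M_J}=\dnull\cup\proj^{-1}(J)$, I would check that $y_\alpha\in X_*(A_{M_J}^+)$ for $\alpha\notin J$, that $t^J\in C_{A_{M_J}}$, and that $\abs{\gamma(t^J)}_F\le q^{-cN}$ uniformly over $\gamma\in\Delta_{M_J}$ for a fixed $c>0$, where $q=\abs{\varpi}_F^{-1}$. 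Hence, after taking $N$ large enough to dominate the finitely many $\epsilon$'s produced by \eqref{eq: cass} for $\grp P_J$ applied to $\mc$ and to its finitely many translates by the $t_0$'s, Casselman's identity holds on each stratum, and since $g\mapsto\mc(t_0 g)$ is again a matrix coefficient, everything reduces to proving, for each $J$ and each matrix coefficient, that $\sum_{t^J}\abs{\mc(t^J)\,\omega(t^J)}<\infty$ precisely under the instance of \eqref{cond: exp} for $\grp P_J$.

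On a given stratum, \eqref{eq: cass} gives $\mc_{v,\tilde v}(t^J)=\delta_{P_J}^{1/2}(t^J)\,\mc_{v_{P_J},\tilde v_{P_J^-}}(t^J)$, and since $t^J$ lies in the central torus $A_{M_J}$ of $M_J$, the matrix coefficient of $r_{P_J}(\pi)$ on the right is a finite sum $\sum_{\chi\in\Exp_{P_J}(\pi)}p_\chi(t^J)\,\chi(t^J)$ with $p_\chi$ a polynomial in the exponents $(n_\alpha)_{\alpha\notin J}$ of $t^J$. Setting $\mu_\chi=\rho_0^G+\Re(\chi)+\Re(\omega)$ and recalling $\rho_{M_J}^G=(\rho_0^G)_{M_J}=\Re(\delta_{P_J}^{1/2})$ (and that $\rho_0^G-\rho_{M_J}^G$ annihilates $y_\alpha\in\aaa_{M_J}$), one obtains $\abs{\delta_{P_J}^{1/2}(t^J)\chi(t^J)\omega(t^J)}=\prod_{\alpha\notin J}q^{-n_\alpha\sprod{\mu_\chi}{y_\alpha}}$. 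On the other hand, Lemma \ref{lem: rest rel rts to M} identifies $\Delta^{G/H}\setminus J$ with $\Delta^{G/H}(M_J)$ by restriction to $A_{M_J}^+$, under which $\{y_\alpha:\alpha\notin J\}$ becomes — up to the positive scalars $\sprod{\alpha}{y_\alpha}$ — the basis dual to $\Delta^{G/H}(M_J)$; hence $\mu\in\aaa_0^*$ is $M_J$-relatively positive in the sense of Definition \ref{def: M pos} exactly when $\sprod{\mu}{y_\alpha}>0$ for all $\alpha\notin J$. Thus \eqref{cond: exp} restricted to $\grp P_J$ says precisely that $\sprod{\mu_\chi}{y_\alpha}>0$ for all $\chi\in\Exp_{P_J}(\pi)$ and all $\alpha\notin J$.

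Assembling these: if \eqref{cond: exp} holds, each stratum contributes $\sum_{(n_\alpha)_{\alpha\notin J}}\abs{p_\chi(t^J)}\prod_{\alpha\notin J}q^{-n_\alpha\sprod{\mu_\chi}{y_\alpha}}$, which is a finite sum over $\chi$ of convergent series (polynomial times decaying geometric), and summing over the finitely many $J$ and translates $t_0$ yields \eqref{cond: conv}. For the converse I argue by contraposition: if \eqref{cond: exp} fails there are $\grp P_J$, $\chi\in\Exp_{P_J}(\pi)$ and $\alpha_0\in\Delta^{G/H}\setminus J$ with $\sprod{\mu_\chi}{y_{\alpha_0}}\le0$; choosing $\bar v$ in the $\chi$-generalized $A_{M_J}$-eigenspace of $r_{P_J}(\pi)$ and $\bar{\tilde v}$ in the generalized eigenspace of $r_{P_J^-}(\tilde\pi)$ pairing non-degenerately with it under Casselman's pairing, with $\bar{\tilde v}(\bar v)\ne0$, and lifting them to $v,\tilde v$, one gets on the stratum $\mc_{v,\tilde v}(t^J)=\delta_{P_J}^{1/2}(t^J)\,p(t^J)\,\chi(t^J)$ with $p\not\equiv0$; restricting the sum to $n_\alpha=N_0$ (large, chosen generically so that $n_{\alpha_0}\mapsto p$ stays a nonzero one-variable polynomial) for $\alpha\ne\alpha_0$ and letting $n_{\alpha_0}\to\infty$ produces a divergent series, so \eqref{cond: conv} fails. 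The hard part is this last step — exhibiting a matrix coefficient whose asymptotics along the offending ray genuinely carries the exponent $\chi$ with a non-vanishing polynomial coefficient and no cancellation against the other exponents — and it is exactly here that the finite-length reduction and the clean generalized-$A_{M_J}$-eigenspace picture furnished by Casselman's canonical pairing between $r_{P_J}(\pi)$ and $r_{P_J^-}(\pi)$ are indispensable.
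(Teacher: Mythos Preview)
Your proposal is correct and follows essentially the same route as the paper: decompose the cone via Lemma~\ref{lem: cone as union}, stratify by subsets $J\subseteq\Delta^{G/H}$ (equivalently, by standard $\inv$-stable parabolics via Lemma~\ref{lem: inv st std par}), absorb the bounded $t_0$-factors into new matrix coefficients, and then invoke Casselman's pairing \eqref{eq: cass} together with the exponent expansion to reduce to a product of geometric series whose convergence is governed exactly by $M_J$-relative positivity through Lemma~\ref{lem: rest rel rts to M}.

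Two small points where the paper is tidier. First, your choice of $N$ ``large enough to dominate the $\epsilon$'s for $\mc$ and its translates by the $t_0$'s'' is circular as phrased, since the set of $t_0$'s itself depends on $N$; the paper avoids this by first proving that \eqref{cond: conv} is equivalent to the intermediate statement ``for every $\mc$ and every $J$ there \emph{exists} $N$ with $\sum_{s\in S_J(N)_0}\abs{\mc(s)\omega(s)}<\infty$'' (a purely combinatorial reduction using only that translates of matrix coefficients are matrix coefficients), and only then picks $N$ using Casselman's $\epsilon$ for the single matrix coefficient at hand. Second, in the converse direction the paper's construction is simpler than yours: it lifts a genuine $A_M$-eigenvector $v_P$ for $\chi$ (not a generalized one) and any $\tilde v$ with $\sprod{v_P}{\tilde v_{P^-}}=1$, so that the Jacquet-side matrix coefficient is exactly $\chi$ on $A_M$ and no polynomial or generic-specialization argument is needed; your version works too, but the finite-length reduction you invoke for it is unnecessary.
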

\begin{proof}
Let $\{y_\alpha:\alpha\in \Delta^{G/H}\}$ be as in Lemma \ref{lem: cone as union}\eqref{part: dual}. In the notation of the lemma let
$t_\alpha=y_\alpha(\varpi)$, $\E=\{e(\varpi):e\in E\}$ and 
\[
S=\{y(\varpi): y\in Y^{\wpos}\}=\{\prod_{\alpha\in \Delta^{G/H}} t_\alpha^{n_\alpha}: n_\alpha\in \Z_{\ge 0}\text{ for all }\alpha\in \Delta^{G/H}\}.
\]
It follows from Lemma \ref{lem: cone as union}\eqref{part: cone}  that we have the disjoint union
\[
[C_{A_0^+}^{\Delta^{G/H},\wpos}]=\bigsqcup_{\epsilon\in \E} \epsilon S.
\]
For a subset $J\subset \Delta^{G/H}$ and a positive integer $N$ let
\[
S_J(N)_0 = \left\{\prod_{\alpha\in \Delta^{G/H}\setminus J} t_\alpha^{n_\alpha}:\; N<n_\alpha  \right\}\;,\quad S_J(N)_1 = \left\{\prod_{\alpha\in  J} t_\alpha^{n_\alpha}:\; 0\leq n_\alpha\leq N  \right\}
\]
and
\[
S_J(N) = S_J(N)_0S_J(N)_1\subseteq S.
\]
Note that $S_J(N)_1$ is a finite set.
Clearly, for any fixed $N$ we have the disjoint union 
\[
S=\bigsqcup_{J\subseteq \Delta^{G/H}} S_J(N)
\]
and therefore
\begin{multline*}
\sum_{s\in [C_{A_0^+}^{\Delta^{G/H},\wpos}]} \abs{\mc(s)\omega(s)}=\sum_{\epsilon\in \E}\sum_{J\subseteq \Delta^{G/H}} \sum_{s\in S_J(N)}  \abs{\mc(\epsilon s)\omega(\epsilon s)}=\\
\sum_{\epsilon\in \E}\sum_{J\subseteq \Delta^{G/H}} \sum_{t\in S_J(N)_1}\abs{\omega(\epsilon t)}\sum_{s\in S_J(N)_0}  \abs{\mc(\epsilon ts)\omega(s)}.
\end{multline*}
Since $\mc(\epsilon t\cdot)\in \MC(\pi)$ and the first three summations on the right hand side are over a finite set, we see that condition \eqref{cond: conv} is equivalent to the condition:

\begin{multline}\label{cond: equiv}
\text{ for every }\mc\in \MC(\pi)\text{ and }J\subseteq\Delta^{G/H}\text{ there exists }N>0 \text{ such that we have }\\ \sum_{s\in S_J(N)_0}  \abs{\mc(s)\omega(s)}<\infty.
\end{multline}
For $J\subseteq \Delta^{G/H}$ let $I=\dnull\cup \proj^{-1}(J)$ and $P=M\ltimes U=P_I$. 
Let $S_M$ be the lattice generated by $\{t_\alpha:\alpha\in \Delta^{G/H}\setminus J\}$. 
We further formulate the condition:
\begin{multline}\label{cond: conv geom}
\sum_{s\in S_J(N)_0} \delta_P^{1/2}(s)\abs{Q(s) \chi(s)\omega(s)}<\infty \text{ for all }N>0,\ J\subseteq \Delta^{G/H}, \\ 
\chi\in \Exp_P(\pi)\text{ and polynomials }Q \text{ on }S_M\text{ with complex coefficients}.
\end{multline}
Clearly \eqref{cond: conv geom} holds if and only if for all $J\subseteq \Delta^{G/H}$, $\chi\in \Exp_P(\pi)$ and $\alpha\in \Delta^{G/H}\setminus J$ we have $\delta_P^{1/2}(t_\alpha)\abs{\chi\omega(t_\alpha)}<1$. Note that $S_J(N)_0$ is contained in $A_M^+$ and that $\delta_{P_0}|_{A_M}=\delta_P|_{A_M}$.
By Lemma \ref{lem: rest rel rts to M} we get that \eqref{cond: exp} is equivalent to \eqref{cond: conv geom}.
It is therefore enough to show that conditions \eqref{cond: equiv} and \eqref{cond: conv geom} are equivalent.

Assume that condition \eqref{cond: conv geom} holds. Fix $\mc\in \MC(\pi)$ and $J\subseteq \Delta^{G/H}$ (so that $I=\dnull\cup \proj^{-1}(J)$ and $P=M\ltimes U=P_I$). Let $\tilde\mc\in \MC(r_P(\pi))$ be the matrix coefficient associated by the Casselman pairing and $\epsilon>0$ be given by \eqref{eq: cass} so that
\[
\mc(a)=\delta_P^{1/2}(a) \tilde\mc(a),\ a\in C_{A_M}^{\pos}(\epsilon).
\]
An element of $\Delta_M$ is of the form $\alpha|_{A_M}$ for some $\alpha\in \Delta^G\setminus I$. Hence $\alpha|_{A_0^+}\in \Delta^{G/H}\setminus J$. It therefore follows from the
definition of the sets $S_J(N)_0$ that there exists $N$ large enough so that $S_J(N)_0\subseteq C_{A_M}^{\pos}(\epsilon)$.
To show that condition \eqref{cond: equiv} holds it is therefore enough to show that
\[
\sum_{s\in S_J(N)_0}  \delta_P^{1/2}(s)\abs{\tilde\mc(s)\omega(s)}<\infty.
\]
A standard argument (see e.g. p. 332-3 in the proof of Casselman's criterion in \cite[Theorem VII.1.2]{MR2567785}) shows that there exist polynomials $Q_\chi$, $\chi\in \Exp_P(\pi)$ on $S_M$, only finitely many of which are non-zero, so that
\[
\tilde\mc(s)=\sum_{\chi\in \Exp_P(\pi)} Q_\chi(s) \chi(s),\ \ \ s\in S_M.
\]
Hence \eqref{cond: equiv} follows immediately from \eqref{cond: conv geom}.

Conversely, assume that \eqref{cond: conv geom} does not hold. Let $J\subseteq \Delta^{G/H}$, $\alpha\in \Delta^{G/H}\setminus J$ and, in the above notation, $\chi\in \Exp_P(\pi)$ be such that 
$\delta_P^{1/2}(t_\alpha)\abs{\chi\omega(t_\alpha)}\ge 1$. Then $\sum_{s\in S_J(N)_0}  \delta_P^{1/2}(s)\abs{\chi(s)\omega(s)}=\infty$ for all $N>0$.
Set $\mc=\mc_{v,\tilde v}$ where $v\in \pi$ is such that $v_P$ is an eigenvector of $A_M$ with eigenvalue $\chi$ (this realizes $\chi$ as an exponent of $\pi$ along $P$) and $\tilde v\in \tilde\pi$ is such that $\sprod{v_P}{\tilde v_{P^-}}=1$. Then, $\tilde\mc|_{A_M}=\chi$ and the above argument applying the Casselman pairing shows that for $N$ large enough
\[
\sum_{s\in S_J(N)_0} \abs{\mc(s)\omega(s)}=\sum_{s\in S_J(N)_0}  \delta_P^{1/2}(s)\abs{\tilde\mc(s)\omega(s)}=\infty.
\]
Thus, condition \eqref{cond: equiv} fails to hold. (Indeed, $S_J(N_1)_0\subseteq S_J(N_2)_0$ for $N_1<N_2$ and therefore, if condition \eqref{cond: equiv} holds then it is satisfied with $N$ arbitrarily large.) 
\end{proof}

\begin{definition}
We say that a smooth representation $\pi$ of $G/A_G^+$ is $H$-integrable if for any $\mc\in \MC(\pi)$ we have
\[
\int_{H/A_G^+} |\mc(h)| \ dh<\infty.
\]
\end{definition}

Let $\rho_0^H=\Re(\delta_{P_0^H}^{1/2})$ and recall that the set $[W^{G/H} / W^H]$ was defined in Corollary \ref{cor: cone}\eqref{part: cone deco}.
We can now formulate our main result.
\begin{theorem}\label{thm: main}
Let $\pi$ be an admissible representation of $G/A_G^+$. Then $\pi$ is $H$-integrable if and only if for any $\inv$-stable, standard parabolic subgroup $P=M\ltimes U$ of $G$ and any $\chi\in \Exp_P(\pi)$, the element $\Re(\chi)+\rho_0^G-2w(\rho_0^H)$ is $M$-relatively positive for all $w\in [W^{G/H} / W^H]$. 
\end{theorem}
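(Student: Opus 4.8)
The plan is to chain the two convergence criteria already in hand --- Proposition \ref{prop: H-int as sum}, which rephrases $H$-integrability as convergence of a sum over the positive chamber of $\roots^H$, and Proposition \ref{prop: conv sum H}, which controls convergence of twisted sums over the positive cone of $\Delta^{G/H}$ in terms of exponents --- with the cone decomposition of Corollary \ref{cor: cone}\eqref{part: cone deco} serving as the bridge between the two.

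First I would observe that, $\pi$ being a representation of $G/A_G^+$, the torus $A_G^+$ acts trivially on $\pi$ and on $\tilde\pi$; hence every matrix coefficient lies in $C^\infty(A_G^+\bs G)$ and $A_G^+$ acts on $\pi$ by a trivial, in particular unitary, character, so both propositions apply. Applying Proposition \ref{prop: H-int as sum} with $\phi=\mc\in\MC(\pi)$ recasts the $H$-integrability of $\pi$ as the condition that $\sum_{s\in[C_0^{H,\wpos}]}\delta_{P_0^H}^{-1}(s)\,\abs{\mc(h_1 s h_2)}<\infty$ for all $\mc\in\MC(\pi)$ and $h_1,h_2\in H$, where $[C_0^{H,\wpos}]$ is the image in $C_{A_0^+}/C_{A_G^+}$ of $X_*(A_0^+)^{\Delta^H,\wpos}$.

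Next I would feed in the finite union $X_*(A_0^+)^{\Delta^H,\wpos}=\bigcup_{w\in[W^{G/H}/W^H]}w^{-1}X_*(A_0^+)^{\Delta^{G/H},\wpos}$ from Corollary \ref{cor: cone}\eqref{part: cone deco}; since the summands are non-negative, the sum is finite precisely when its restriction to the $w$-part is finite for every $w\in[W^{G/H}/W^H]$. Fixing $w$ together with a representative $\dot w\in N_G(A_0^+)\subseteq G$, the reindexing $s=\dot w^{-1}t\dot w$ with $t$ running over $[C_{A_0^+}^{\Delta^{G/H},\wpos}]$ is a bijection onto the $w$-part --- legitimate because the defining property of $[W^{G/H}/W^H]$ guarantees $w^{-1}X_*(A_0^+)^{\Delta^{G/H},\wpos}\subseteq X_*(A_0^+)^{\Delta^H,\wpos}$. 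It turns $\mc(h_1 s h_2)$ into $\mc'(t)$ for the matrix coefficient $\mc'(g)=\mc(h_1\dot w^{-1}g\dot w h_2)$ of $\pi$, and turns $\delta_{P_0^H}^{-1}(\dot w^{-1}t\dot w)$ into $\abs{\omega_w(t)}$, where $\omega_w$ is the positive character of $A_0^+$ with $\Re(\omega_w)=-2w(\rho_0^H)$; since $\rho_0^H$ is trivial on the central torus $A_G^+$ and $W^{G/H}$ fixes $A_G^+$ pointwise, $\omega_w$ descends to a character of $A_0^+/A_G^+$. As $\mc\in\MC(\pi)$ and $h_1,h_2\in H$ vary with $\dot w$ fixed --- already $h_1=h_2=e$ suffices --- the coefficient $\mc'$ runs over all of $\MC(\pi)$. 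Thus $\pi$ is $H$-integrable if and only if, for every $w\in[W^{G/H}/W^H]$ and every $\mc'\in\MC(\pi)$, $\sum_{t\in[C_{A_0^+}^{\Delta^{G/H},\wpos}]}\abs{\mc'(t)\,\omega_w(t)}<\infty$.

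Finally, for each fixed $w$ this is precisely condition \eqref{cond: conv} of Proposition \ref{prop: conv sum H} applied to $\omega=\omega_w$, which by that proposition is equivalent to $M$-relative positivity of $\Re(\chi)+\Re(\omega_w)+\rho_0^G=\Re(\chi)+\rho_0^G-2w(\rho_0^H)$ for every $\inv$-stable, standard parabolic $P=M\ltimes U$ of $G$ and every $\chi\in\Exp_P(\pi)$; quantifying over all $w\in[W^{G/H}/W^H]$ yields the theorem. I do not anticipate a conceptual obstacle, since the substance lies wholly in the two cited propositions and the cone decomposition; the steps needing care are bookkeeping ones --- verifying that the conjugation reindexing carries the $\Delta^{G/H}$-cone into the $\Delta^H$-cone (which is exactly what the coset representatives $[W^{G/H}/W^H]$ were built to achieve), correctly identifying the twisting character as $\omega_w$ with $\Re(\omega_w)=-2w(\rho_0^H)$ and checking its triviality on $A_G^+$, and confirming that $\mc'$ exhausts $\MC(\pi)$.
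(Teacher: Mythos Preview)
Your proposal is correct and follows essentially the same route as the paper's proof: reduce $H$-integrability to the sum over $[C_0^{H,\wpos}]$ via Proposition \ref{prop: H-int as sum}, use the cone decomposition of Corollary \ref{cor: cone}\eqref{part: cone deco} to pass to sums over $[C_{A_0^+}^{\Delta^{G/H},\wpos}]$ indexed by $w\in[W^{G/H}/W^H]$, absorb the conjugation by $\dot w$ into the matrix coefficient, and invoke Proposition \ref{prop: conv sum H} with the twist $\omega_w$ satisfying $\Re(\omega_w)=-2w(\rho_0^H)$. The paper's write-up is terser---it folds the $h_1,h_2$ into the matrix coefficient at the outset and then works only with $\mc(n^{-1}\,\cdot\,n)$---but the logical skeleton is identical, and the bookkeeping points you flag (surjectivity of $\mc\mapsto\mc'$, triviality of $\omega_w$ on $A_G^+$) are handled the same way.
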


\begin{proof}
Let $\mathcal{N}^{G/H}$ be a subset of $N_G(A_0^+)$ consisting of a choice of a representative $n$ for every element $w\in [W^{G/H} / W^H]$. 
Since every (left or right) translation by $G$ of an element of $\MC(\pi)$ is again in $\MC(\pi)$ it follows from Proposition \ref{prop: H-int as sum} (in its notation) that $\pi$ is $H$-integrable if and only if 
\begin{equation}\label{cond: H-sum conv}
\sum_{s\in [C_0^{H,\wpos}]} \delta^{-1}_{P_0^H}(s)\abs{\mc(s)}<\infty \text{ for all }\mc\in \MC(\pi).
\end{equation}
By Corollary \ref{cor: cone} we have
\[
[C_0^{H,\wpos}]=\mathop{\cup}\limits_{n\in \mathcal{N}^{G/H}} n^{-1}[C_{A_0^+}^{\Delta^{G/H},\wpos}]n
\]
and therefore,
\[
\sum_{s\in [C_0^{H,\wpos}]} \delta^{-1}_{P_0^H}(s)\abs{\mc(s)}<\infty 
\]
if and only if 
\[
\sum_{s\in [C_{A_0^+}^{\Delta^{G/H},\wpos}]} \delta^{-1}_{P_0^H}(n^{-1}sn)\abs{\mc(n^{-1}sn)}<\infty 
\]
for all $n\in \mathcal{N}^{G/H}$.
Note that $\mc(n^{-1}\cdot n)\in\MC(\pi)$ and that $\Re(\delta_{P_0^H}(n^{-1}\cdot n))=2w(\rho_0^H)$, when $n$ represents $w\in [W^{G/H} / W^H]$. It now follows from Proposition \ref{prop: conv sum H} (applied with $\omega=\delta^{-1}_{P_0^H}(n^{-1}\cdot n)|_{A_0^+}$) that \eqref{cond: H-sum conv} is equivalent to the condition in the statement of the theorem.

\end{proof}
\begin{remark}\label{remark: proj M1}
Recall from Definition \ref{def: M pos} that the condition that, $\lambda$ is $M$-relatively positive for any $\inv$-stable standard Levi subgroup $M$ of $G$, depends only on $\lambda_{M_1}$. It follows that $\rho_0^G$ may be replaced by $\rho_{M_1}^G$ in Theorem \ref{thm: main}. 

Furthermore, since $A_0^+$ is contained in $A_{M_1}$ (indeed, $M_1$ is the centralizer in $G$ of $A_0^+$) we have $(\aaa_0^*)_\inv^+\subseteq \aaa_{M_1}^*$ and therefore 
\[
(\aaa_{M_1}^*)_\theta^+=(\aaa_0^*)_\theta^+.
\]
\end{remark}

 \subsection{The relative test characters}

Theorem \ref{thm: main} points on the significance of the exponents 
\[
\rho^w_{G/H}:= (\rho_0^G )_\inv^+ - 2w(\rho_0^H) = (\rho_{M_1}^G )_\inv^+ - 2w(\rho_0^H)\in (\aaa_{M_1}^*)_\theta^+=(\aaa_0^*)_\theta^+
\]
for $w\in [W^{G/H} / W^H]$. We will now present means to compute these exponents using the action of $\inv$ on the various root data involved.

For $\alpha\in \roots^{G/H}$, let $L^G_\alpha$ (resp. $L^H_\alpha$) be the weight space of $\alpha$ in $\Lie(G)$ (resp. $\Lie(H)$). Thus $L_\alpha^H=0$ if $\alpha\notin \roots^H$. Set
\[
M^G_\alpha = \dim L^G_\alpha,\quad M^H_\alpha = \dim L^H_\alpha.
\]

Since $A_0^+$ is $\inv$-fixed, its adjoint action on $\Lie(G)$ commutes with the $\inv$-action. Thus, each $L^G_\alpha$ is a $\theta$-invariant subspace of $\Lie(G)$.

\begin{lemma}\label{lem: multip}
Let $\alpha\in \roots^{G/H}$ and set 
\[
m_{\inv,\alpha}= \Tr(\inv|_{L^G_\alpha}).
\]

\begin{enumerate}
\item\label{part: trace}
We  have $m_{\inv,\alpha}= 2M^H_\alpha-M^G_\alpha$.

\item\label{part: trace vanish}
If $\inv(\beta)\neq \beta$ for every $\beta \in \roots^G$ such that $\beta|_{A_0^+}=\alpha$, then $m_{\inv,\alpha}=0$.

\end{enumerate}
\end{lemma}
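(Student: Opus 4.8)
The plan is to compute the trace of $\inv$ on $L^G_\alpha$ by decomposing this space into $\pm1$-eigenspaces of $\inv$. Since $\inv$ is an involution on the finite-dimensional space $L^G_\alpha$, we have $L^G_\alpha = (L^G_\alpha)^+ \oplus (L^G_\alpha)^-$ where $(L^G_\alpha)^{\pm}$ is the $\pm 1$-eigenspace, and hence $m_{\inv,\alpha} = \dim (L^G_\alpha)^+ - \dim (L^G_\alpha)^- = 2\dim(L^G_\alpha)^+ - M^G_\alpha$. So for part \eqref{part: trace} it suffices to identify $(L^G_\alpha)^+$ with $L^H_\alpha$. For this I would use \eqref{eq: lie inv}, which says $\Lie(H) = \Lie(G)^\inv$: since $A_0^+ \subseteq H$ is $\inv$-fixed and its adjoint action commutes with $\inv$, the weight-space decomposition of $\Lie(G)$ under $A_0^+$ restricts to a weight-space decomposition of $\Lie(H)$, so $L^H_\alpha = \Lie(H) \cap L^G_\alpha = (\Lie(G))^\inv \cap L^G_\alpha = (L^G_\alpha)^\inv = (L^G_\alpha)^+$. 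This gives $\dim(L^G_\alpha)^+ = M^H_\alpha$ and hence part \eqref{part: trace}.

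For part \eqref{part: trace vanish}, the idea is to decompose $L^G_\alpha$ further according to the $A_0$-weights (elements of $\roots^G$) that restrict to $\alpha$ on $A_0^+$. By \eqref{lem: all rts rest} (really the remark preceding it), the root-space decomposition of $\Lie(G)$ under $A_0$ refines that under $A_0^+$, so $L^G_\alpha = \bigoplus_{\beta} L^G_{A_0,\beta}$ where $\beta$ ranges over the (finite) set $\roots^G_\alpha := \{\beta \in \roots^G : \beta|_{A_0^+} = \alpha\}$ and $L^G_{A_0,\beta}$ denotes the $A_0$-weight space for $\beta$ in $\Lie(G)$. As established early in Section \ref{prelim}, $\inv$ permutes the $A_0$-root spaces, sending $L^G_{A_0,\beta}$ onto $L^G_{A_0,\inv(\beta)}$; moreover by \eqref{eq: inv on pm}, $\inv(\beta)$ and $\beta$ agree on $A_0^+$, so $\inv$ preserves the subset $\roots^G_\alpha$ and permutes the spaces $\{L^G_{A_0,\beta} : \beta \in \roots^G_\alpha\}$ accordingly.

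Now suppose the hypothesis of \eqref{part: trace vanish} holds, i.e.\ $\inv(\beta) \neq \beta$ for every $\beta \in \roots^G_\alpha$. Then the permutation induced by $\inv$ on the index set $\roots^G_\alpha$ is fixed-point free, and since $\inv^2 = \mathrm{id}$ it is a product of transpositions: $\roots^G_\alpha$ splits into pairs $\{\beta, \inv(\beta)\}$. For such a pair, $\inv$ interchanges $L^G_{A_0,\beta}$ and $L^G_{A_0,\inv(\beta)}$, and the trace of an involution that swaps two subspaces of equal dimension with no common fixed vectors is $0$ (in a basis adapted to the swap the matrix is a block permutation $\left(\begin{smallmatrix}0&I\\ I&0\end{smallmatrix}\right)$, which is traceless). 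Summing over the pairs gives $m_{\inv,\alpha} = \Tr(\inv|_{L^G_\alpha}) = 0$. The only point requiring a little care — and the main obstacle I anticipate, though it is minor — is checking that $\inv$ genuinely maps $L^G_{A_0,\beta}$ \emph{into} $L^G_{A_0,\inv(\beta)}$ (not merely into the $A_0^+$-weight space $L^G_\alpha$), which is exactly the computation recorded right after the choice of $\inv$-stable $A_0$ in Section \ref{prelim}: if $\Ad(a)v = \beta(a)v$ for $a \in A_0$ then $\Ad(a)\inv(v) = \inv(\Ad(\inv(a))v) = \inv(\beta)(a)\,\inv(v)$, using $\inv(A_0) = A_0$. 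With that in hand both parts follow.
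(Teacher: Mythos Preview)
Your proof is correct and follows essentially the same approach as the paper's own proof: for part~\eqref{part: trace} you decompose $L^G_\alpha$ into $\pm1$-eigenspaces of $\inv$ and identify the $+1$-eigenspace with $L^H_\alpha = L^G_\alpha\cap\Lie(G)^\inv$; for part~\eqref{part: trace vanish} you pair up the $A_0$-root spaces swapped by $\inv$ and observe the trace vanishes on each pair. The paper's argument is the same, only phrased slightly more tersely.
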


\begin{proof}
The linear involution $\inv$ on $L^G_\alpha$ decomposes the space into a sum of the eigenspaces related to the eigenvalues $1$ and $-1$. The $1$-eigenspace is precisely $L^G_\alpha \cap \Lie(G)^\theta = L^H_\alpha$. Thus, $m_{\inv,\alpha} = 1\cdot M^H_\alpha + (-1)\cdot (M^G_\alpha-M^H_\alpha)$.

Suppose that $\alpha$ is as in the assumption of \eqref{part: trace vanish}. Then there is an even number of elements of $\roots^G$ whose restriction to $A_0^+$ is $\alpha$ and we can enumerate them as $\{\beta_1,\ldots,\beta_k, \gamma_1,\ldots,\gamma_k\}$ with $\inv(\beta_i)=\gamma_i$. Thus, $L^G_\alpha$ admits a decomposition $L^G_\alpha = V_1\oplus V_2$ with $\inv(V_1)=V_2$ (indeed take $V_1$ to be the direct sum of root eigenspaces in $\Lie(G)$ with respect to $\{\beta_1,\ldots,\beta_k\}$ and similarly $V_2$ with respect to $\{\gamma_1,\ldots,\gamma_k\}$). Evidently, this implies that $\inv|_{L^G_\alpha}$ is of zero trace.

\end{proof}

Let $\roots^{G/H,\pos} := \roots^{G/H} \cap \overline\cone(A_0^+,\Delta^{G/H})$ be the set of positive roots in $\roots^{G/H}$.  These are the non-zero restrictions to $A_0^+$ of roots in $\roots^{G,\pos}$.

\begin{proposition}\label{prop: rel char as sum}
For every $w\in [W^{G/H} / W^H]$ we have
\[
\rho^w_{G/H} = -\frac12 \sum_{\alpha\in \roots^{G/H,\pos}} m_{\inv,w^{-1}(\alpha)}\, \alpha.
\]
\end{proposition}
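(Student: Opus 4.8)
The plan is to write both $(\rho_0^G)_\inv^+$ and $\rho_0^H$ as explicit sums over the positive descendent roots, substitute into the definition of $\rho^w_{G/H}$, and reconcile the outcome with the claimed formula using Lemma~\ref{lem: multip}\eqref{part: trace} and a reindexing. First I recall that, under the identification $(\aaa_0^*)_\inv^+\simeq X^*(A_0^+)\otimes_\Z\R$ (the one used in the proof of Lemma~\ref{lem: pos rest}), the projection $\lambda\mapsto\lambda_\inv^+$ on $\aaa_0^*$ is just restriction of characters to $A_0^+$. Writing $\rho_0^G=\frac12\sum_{\beta\in\roots^{G,\pos}}(\dim\mathfrak{g}_\beta)\,\beta$, where $\mathfrak{g}_\beta\subset\Lie(G)$ is the $A_0$-root space of $\beta$, I would restrict to $A_0^+$, discard the $\beta$ with $\beta|_{A_0^+}=0$, and collect the rest according to the value $\alpha=\beta|_{A_0^+}\in\roots^{G/H,\pos}$. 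For a fixed such $\alpha$ every $\beta\in\roots^G$ with $\beta|_{A_0^+}=\alpha$ is automatically positive — otherwise $-\alpha$ would also be a non-zero restriction of a positive root of $\roots^G$, impossible since $\roots^{G/H,\pos}$ is a positive system for $\roots^{G/H}$ (Proposition~\ref{prop: HW basis}) — so the $\mathfrak{g}_\beta$ with $\beta|_{A_0^+}=\alpha$ fill out $L^G_\alpha$ and their dimensions sum to $M^G_\alpha$. Hence $(\rho_0^G)_\inv^+=\frac12\sum_{\alpha\in\roots^{G/H,\pos}}M^G_\alpha\,\alpha$. Applying the same bookkeeping to $\rho_0^H=\Re(\delta_{P_0^H}^{1/2})=\frac12\sum_{\gamma\in\roots^{H,\pos}}M^H_\gamma\,\gamma$, and using $M^H_\gamma=0$ for $\gamma\notin\roots^H$ together with $\roots^{H,\pos}=\roots^H\cap\roots^{G/H,\pos}$ (Corollary~\ref{cor: cone}\eqref{part: roots}), I get $2\rho_0^H=\sum_{\alpha\in\roots^{G/H,\pos}}M^H_\alpha\,\alpha$.

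Next I would plug these into $\rho^w_{G/H}=(\rho_0^G)_\inv^+-2w(\rho_0^H)$ and expand the proposition's right-hand side via $m_{\inv,\gamma}=2M^H_\gamma-M^G_\gamma$ (Lemma~\ref{lem: multip}\eqref{part: trace}). Cancelling the evident $M^H$-terms, the claimed equality becomes equivalent to
\[
\frac12\sum_{\alpha\in\roots^{G/H,\pos}}\bigl(M^G_\alpha-M^G_{w^{-1}(\alpha)}\bigr)\alpha=2w(\rho_0^H)-\sum_{\alpha\in\roots^{G/H,\pos}}M^H_{w^{-1}(\alpha)}\,\alpha .
\]
The left side is zero because $M^G$ is $W^{G/H}$-invariant: for a representative $n\in N_G(A_0^+)$ of $w$, $\Ad(n)$ maps the $A_0^+$-weight space $L^G_\alpha$ isomorphically onto $L^G_{w(\alpha)}$. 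So the proposition reduces to the single identity $2w(\rho_0^H)=\sum_{\alpha\in\roots^{G/H,\pos}}M^H_{w^{-1}(\alpha)}\,\alpha$.

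To prove this, on one side the formula for $\rho_0^H$ and the fact that $M^H$ is supported on $\roots^H$ give $2w(\rho_0^H)=\sum_{\gamma\in\roots^{H,\pos}}M^H_\gamma\,w(\gamma)=\sum_{\alpha\in w(\roots^{H,\pos})}M^H_{w^{-1}(\alpha)}\,\alpha$; on the other, in $\sum_{\alpha\in\roots^{G/H,\pos}}M^H_{w^{-1}(\alpha)}\,\alpha$ only the $\alpha$ with $w^{-1}(\alpha)\in\roots^H$ survive, so it is enough to check
\[
\{\alpha\in\roots^{G/H,\pos}:\ w^{-1}(\alpha)\in\roots^H\}=w(\roots^{H,\pos}).
\]
The inclusion $\supseteq$ follows once one knows $w(\roots^{H,\pos})\subseteq\roots^{G/H,\pos}$, which is exactly Corollary~\ref{cor: cone}\eqref{part: cone with w}. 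For $\subseteq$: if $\alpha\in\roots^{G/H,\pos}$ and $\gamma:=w^{-1}(\alpha)\in\roots^H$, then $\gamma$ or $-\gamma$ lies in $\roots^{H,\pos}$; if $-\gamma$ did, then $-\alpha=w(-\gamma)\in w(\roots^{H,\pos})\subseteq\roots^{G/H,\pos}$ would contradict $\alpha\in\roots^{G/H,\pos}$, so $\gamma\in\roots^{H,\pos}$ and $\alpha\in w(\roots^{H,\pos})$. The two sums then agree term by term and the proposition follows.

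The only step that is not pure bookkeeping is this last one: a priori the sums $2w(\rho_0^H)$ and $\sum_{\alpha\in\roots^{G/H,\pos}}M^H_{w^{-1}(\alpha)}\alpha$ run over different index sets, $w(\roots^{H,\pos})$ versus the $\roots^H$-part of $\roots^{G/H,\pos}$, and one must rule out the possible discrepancy — a positive $H$-root sent by $w$ to a negative descendent root, or a positive descendent root whose $w^{-1}$-image is a negative $H$-root. This is precisely the compatibility $w(\roots^{H,\pos})\subseteq\roots^{G/H,\pos}$ built into the choice of the representatives $[W^{G/H}/W^H]$ in Corollary~\ref{cor: cone}, and it is where that corollary is indispensable; without it the stated formula would fail.
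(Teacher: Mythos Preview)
Your proof is correct and follows essentially the same approach as the paper: both compute $(\rho_0^G)_\inv^+$ and $\rho_0^H$ as weighted sums over $\roots^{G/H,\pos}$, use the $W^{G/H}$-invariance of $M^G$ via $\Ad(n)$, and invoke Corollary~\ref{cor: cone}\eqref{part: cone with w} to justify the reindexing $w(\rho_0^H)=\tfrac12\sum_{\alpha\in\roots^{G/H,\pos}}M^H_{w^{-1}(\alpha)}\alpha$ before applying Lemma~\ref{lem: multip}\eqref{part: trace}. The only cosmetic difference is that the paper substitutes $M^G_\alpha=M^G_{w^{-1}(\alpha)}$ directly into the expression for $(\rho_0^G)_\inv^+$ and then subtracts, whereas you first expand both sides and cancel; the content is identical.
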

\begin{proof}
Recall that $\delta_{P_0}(a)= |\det (\Ad(a)|_{\Lie(P_0)})|_F$, $a\in A_0$. Applied to $H$ this gives 
\[
\rho^H_0= \frac12 \sum_{\alpha\in \roots^{H,\pos}} M^H_\alpha\,\alpha.
\] 
%It follows that the character $-\rho^G_0$ in $\aaa_0^*$ is given as half the sum of roots in $\roots^{G,+}$ counted with their multiplicities (dimensions of weight spaces) in $\Lie(G)$. 
Applied to $G$ and composed with the projection of $\rho^G_0$ to $(\aaa_0^*)_\inv^+$ we have 
\[
(\rho_0^G)_\inv^+= \frac12 \sum_{\alpha\in \roots^{G/H,\pos}} M^G_\alpha\, \alpha.
\]

Now, let $w\in [W^{G/H} / W^H]$ be given. Then,  
\begin{equation}\label{eq: rhoh}
w(\rho^H_0)= \frac12 \sum_{\alpha\in \roots^{H,\pos}} M^H_\alpha\, w(\alpha)=  \frac12 \sum_{\alpha\in w(\roots^{H,\pos})} M^H_{w^{-1}(\alpha)}\, \alpha = \frac12 \sum_{\alpha\in \roots^{G/H,\pos}} M^H_{w^{-1}(\alpha)}\, \alpha.
\end{equation}
The last equality is obtained as follows. By
%Let us expound on the last equality. First, we have $w^{-1}(\roots^{H,+})\subseteq \roots^{H,+}_G$ from 
Corollary \ref{cor: cone}\eqref{part: cone with w} we have $w(\roots^{H,\pos})\subseteq \roots^{G/H,\pos}$. The equality will therefore follow if we show that $M^H_\beta=0$ (i.e., that $\beta\notin \roots^H$) for $\beta\in w^{-1}(\roots^{G/H,\pos}) \setminus \roots^{H,\pos}$. 
Assume by contradiction that $-\beta\in \roots^{H,\pos}$. As above, by Corollary \ref{cor: cone}\eqref{part: cone with w} we have $-w(\beta)\in \roots^{G/H,\pos}$, i.e., both $\pm w(\beta)\in \roots^{G/H,\pos}$ which is a contradiction. 
%Hence, $\beta$ is not in $\roots^H$, i.e., $L^H_\beta$ is trivial.

Finally, there exists $n\in N_G(A_0^+)$ (a representative of $w^{-1}$) such that $Ad(n)(L^G_\alpha)= L^G_{w^{-1}(\alpha)}$ for all $\alpha\in \roots^{G/H}$. Hence, $M^G_{\alpha} = M^G_{w^{-1}(\alpha)}$ and we can write
\begin{equation}\label{eq: rhog}
(\rho_0^G)_\inv^+ = \frac12 \sum_{\alpha\in \roots^{G/H,\pos}} M^G_{w^{-1}(\alpha)}\, \alpha.
\end{equation}
The statement now follows from \eqref{eq: rhoh}, \eqref{eq: rhog} and Lemma \ref{lem: multip}.
\end{proof}

\section{Some special cases}\label{sect: special cases}

%\subsection{General notions}
In this section we examine our criterion for $H$-integrability of matrix coefficients on certain symmetric spaces.
In \cite{1203.0039}, Sakellaridis and Venkatesh defined the notion of a strongly tempered spherical variety.
We recall the definition and make an analogous definition for square-integrable representations\footnote{In fact, the definition of Sakellaridis and Venkatesh is for $\grp{G/H}$, it is more convenient for us to consider a single $G$-orbit $G/H$.}.

\begin{definition}\label{def: strongly}
We say that $G/H$ is strongly tempered (resp.~strongly discrete) if every irreducible tempered (resp.~square-integrable) smooth representation $\pi$ of $G$ is $H$-integrable.
\end{definition}

We provide examples of families of symmetric spaces for which the above properties hold. In order to be able to apply Theorem \ref{thm: main} to this problem, we first need to recall Casselman's criterion for square integrability \cite[Theorem 4.4.6]{CassNotes} and a similar criterion for temperdness (see e.g. \cite[Proposition III.2.2]{MR1989693}. 
\begin{theorem}\label{thm: cass}
Let $\pi$ be an admissible representation of $G$ for which the centre of $G$ acts by a unitary character. Then $\pi$ is square-integrable (resp.~tempered) if and only if $\Re(\chi)\in \cone(A_M,\Delta_M)$ (resp.~$\Re(\chi)\in \ccone(A_M,\Delta_M)$), for any standard parabolic $F$-subgroup $P=M\ltimes U$ of $G$ and any $\chi\in \Exp_P(\pi)$. 
\end{theorem}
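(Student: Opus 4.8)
Since Theorem \ref{thm: cass} is the classical criterion of Casselman together with its standard temperedness analogue, the plan is to run precisely the two-step scheme --- Cartan reduction followed by Casselman's pairing --- that already underlies Propositions \ref{prop: H-int as sum} and \ref{prop: conv sum H}, now carried out for $G$ itself (the degenerate case $H=G$, $\inv=\mathrm{id}$). First I would invoke the Cartan decomposition $G=\bigsqcup_{c\in C_0^{\wpos}}\bigsqcup_{f\in F_0}KfcK$ together with the volume identity of Lemma \ref{lem: vol cart}; since left and right translates of a matrix coefficient are again matrix coefficients, the verbatim analogue of the proof of Proposition \ref{prop: H-int as sum} (with $\abs{\cdot}^2$, resp.\ $\abs{\cdot}^{2+\eps}$, in place of $\abs{\cdot}$) shows that $\pi$ is square-integrable if and only if $\sum_{s\in[C_0^{\wpos}]}\delta_{P_0}^{-1}(s)\abs{\mc(s)}^2<\infty$ for every $\mc\in\MC(\pi)$, and that $\pi$ is tempered if and only if $\sum_{s\in[C_0^{\wpos}]}\delta_{P_0}^{-1}(s)\abs{\mc(s)}^{2+\eps}<\infty$ for every $\mc\in\MC(\pi)$ and every $\eps>0$ (the $h_1,h_2$ of that proposition being absorbed into the choice of $\mc$).

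Next I would decompose $[C_0^{\wpos}]$ into the faces $S_I(N)$ indexed by subsets $I\subseteq\Delta^G$ exactly as in the proof of Proposition \ref{prop: conv sum H} with $\inv$ trivial (so $\dnull=\emptyset$ and $\Delta^{G/H}=\Delta^G$): on the piece $S_I(N)_0$ the coordinates along $\Delta^G\setminus I$ go to infinity while those along $I$ stay bounded, so for $N$ large $S_I(N)_0\subseteq C_{A_{M_I}}^{\pos}(\eps)$ and Casselman's pairing \eqref{eq: cass} gives $\mc(a)=\delta_{P_I}^{1/2}(a)\tilde\mc(a)$ with $\tilde\mc\in\MC(r_{P_I}(\pi))$. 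Using $\delta_{P_0}\rest_{A_{M_I}}=\delta_{P_I}\rest_{A_{M_I}}$, the weighted sum over $S_I(N)_0$ reduces to $\sum\abs{\tilde\mc(s)}^2$ (resp.\ $\sum\delta_{P_I}^{\eps/2}(s)\abs{\tilde\mc(s)}^{2+\eps}$), and feeding in the asymptotic expansion $\tilde\mc(s)=\sum_{\chi\in\Exp_{P_I}(\pi)}Q_\chi(s)\chi(s)$ one checks by a geometric-series estimate that convergence for all $\mc$ is equivalent to $\abs{\chi(t_\alpha)}<1$ (resp.\ $\abs{\chi(t_\alpha)}\le1$) for every $\chi\in\Exp_{P_I}(\pi)$ and every $\alpha\in\Delta_{M_I}$, where $t_\alpha=y_\alpha(\varpi)$ with $\{y_\alpha\}$ the dual basis of $\Delta_{M_I}$ furnished by Lemma \ref{lem: cone as union}. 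Rewriting $\abs{\chi(t_\alpha)}<1$ as $\sprod{\Re(\chi)}{y_\alpha}>0$ and using that $\{y_\alpha\}_{\alpha\in\Delta_{M_I}}$ is dual to the basis $\Delta_{M_I}$, this says that every $\Delta_{M_I}$-coordinate of $\Re(\chi)$ is positive, i.e.\ $\Re(\chi)\in\cone(A_{M_I},\Delta_{M_I})$ (resp.\ nonnegative, i.e.\ $\Re(\chi)\in\ccone(A_{M_I},\Delta_{M_I})$); the hypothesis that the centre of $G$ acts by a unitary character places $\Re(\chi)$ in the span of $\Delta_{M_I}$, so that this is a genuine equivalence.

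For necessity, if the condition fails at some $P_I$, $\chi$ and $\alpha$, I would realize $\chi$ by choosing $v\in\pi$ whose image $v_{P_I}$ is a genuine $A_{M_I}$-eigenvector with eigenvalue $\chi$ and $\tilde v\in\tilde\pi$ with $\sprod{v_{P_I}}{\tilde v_{P_I^-}}=1$; then $\tilde\mc\rest_{A_{M_I}}=\chi$ with no polynomial or lower-order contamination, and the sum over $S_I(N)_0$ is a genuinely divergent geometric series --- the same device as in the last paragraph of the proof of Proposition \ref{prop: conv sum H}. The one point I expect to need real care is the handling of the polynomial factors $Q_\chi$ on the sufficiency side: for square-integrability the strict inequalities $\abs{\chi(t_\alpha)}<1$ simply dominate polynomial growth, whereas for temperedness one must observe that on each face $\delta_{P_I}(t_\alpha)<1$ strictly (every positive root of $A_{M_I}$ in $U_I$ has a nonnegative, and in one direction positive, coefficient on $\alpha$ when expressed in $\Delta_{M_I}$), so that the extra factor $\delta_{P_I}^{\eps/2}(s)$ decays exponentially in every direction and again beats any polynomial --- which is exactly why the non-strict inequalities are the correct ones there. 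As a sanity check one may note that Theorem \ref{thm: cass} is also recovered as the special case of Theorem \ref{thm: main} in which $G$ is replaced by $G\times G$ with the swap involution $\inv(x,y)=(y,x)$.
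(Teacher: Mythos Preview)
The paper does not supply its own proof of Theorem \ref{thm: cass}; it is stated as a known result and attributed to \cite[Theorem 4.4.6]{CassNotes} for square-integrability and to \cite[Proposition III.2.2]{MR1989693} for temperedness. There is therefore nothing in the paper to compare your argument against.

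That said, your sketch is a faithful reconstruction of the standard proof and, as you observe, is exactly the degenerate case $\inv=\mathrm{id}$ (equivalently the group case $G\times G$ with the swap involution) of the machinery the paper develops in Propositions \ref{prop: H-int as sum} and \ref{prop: conv sum H}. The two-step reduction via Cartan decomposition and Casselman's pairing, the face decomposition of $[C_0^{\wpos}]$, the choice of an eigenvector to produce a divergent series for necessity, and the remark that the unitary central character forces $\Re(\chi)\in(\aaa_M^G)^*$ are all correct. Your handling of the tempered case via the $L^{2+\eps}$ characterization and the observation that the extra $\delta_{P_I}^{\eps/2}$ factor decays strictly on each face is also the standard way to get the weak inequalities; just be aware that some references (including the one the paper cites) phrase temperedness via bounds by the Harish-Chandra function $\Xi$ rather than via $L^{2+\eps}$, so if you were writing this up in full you would want to invoke the equivalence of those formulations for admissible representations.
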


\begin{remark}
Set $\grp G=\grp H \times \grp H$ and $\inv(x,y)=(y,x)$, $x,\,y\in\grp H$ an involution on $\grp G$. Then $\grp H\simeq \grp G^\inv$ is embedded diagonally in $\grp G$.
Clearly, $\inv$-stable parabolic subgroups of $\grp G$ are in bijection with parabolic subgroups of $\grp H$, $\roots^{G/H}=\roots^H$ and $W^{G/H}=W^H$. Applying Theorem \ref{thm: main} to a representation of the form $\pi \otimes \tilde \pi$ of $G$, where $\pi$ is an admissible representation of $H$, recovers Casselman's criterion for square integrability of representations of $H$.
\end{remark}

It is straightforward from the definitions that an $M_1$-relatively (weakly) positive element of $(\aaa^*_0)_\inv^+$ is also $M$-relatively (weakly) positive for every standard $\inv$-stable Levi subgroup $M$. The following is therefore a straightforward consequence of Corollary \ref{cor: rel rest M} and Theorems \ref{thm: main} and \ref{thm: cass}.

\begin{corollary}\label{prop: rel chars}
If the relative test characters $\rho^w_{G/H}$ are $M_1$-relatively positive (resp.~weakly positive) for all $w\in [W^{G/H} / W^H]$, then $G/H$ is strongly tempered (resp.~strongly discrete).
\end{corollary}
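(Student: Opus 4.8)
The plan is to read the statement off directly from Theorem~\ref{thm: main} (the $H$-integrability criterion), Casselman's criterion for temperedness/square-integrability (Theorem~\ref{thm: cass}), and the elementary cone comparisons in Corollary~\ref{cor: rel rest M} and the remark preceding this corollary. Fix an irreducible tempered (resp.\ square-integrable) smooth representation $\pi$ of $G$. Its central character is unitary, so for every $\mc\in\MC(\pi)$ the function $\abs{\mc}$ is left- and right-$A_G^+$-invariant; hence $\int_{H/A_G^+}\abs{\mc(h)}\,dh$ is meaningful, and the criterion of Theorem~\ref{thm: main} applies --- although that theorem is phrased for representations of $G/A_G^+$, its proof passes through Proposition~\ref{prop: conv sum H}, which only involves absolute values of matrix coefficients and already allows $A_G^+$ to act by a unitary character. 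Thus it suffices to show that for every $\inv$-stable, standard parabolic $P=M\ltimes U$, every $\chi\in\Exp_P(\pi)$, and every $w\in[W^{G/H}/W^H]$, the element $\Re(\chi)+\rho_0^G-2w(\rho_0^H)$ is $M$-relatively positive.

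I would split this as $\Re(\chi)+(\rho_0^G-2w(\rho_0^H))$ and treat the two summands separately. For the first, Theorem~\ref{thm: cass} gives $\Re(\chi)\in\ccone(A_M,\Delta_M)$ in the tempered case and $\Re(\chi)\in\cone(A_M,\Delta_M)$ in the square-integrable case; by Corollary~\ref{cor: rel rest M} this makes $\Re(\chi)$ $M$-relatively weakly positive, resp.\ $M$-relatively positive. For the second summand, observe that its projection to $(\aaa_{M_1}^*)_\inv^+=(\aaa_0^*)_\inv^+$ is precisely the relative test character $\rho^w_{G/H}=(\rho_0^G)_\inv^+-2w(\rho_0^H)$, since $w(\rho_0^H)$ already lies in $(\aaa_0^*)_\inv^+$. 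By hypothesis $\rho^w_{G/H}$ is $M_1$-relatively positive (resp.\ weakly positive), so by the remark preceding this corollary it is $M$-relatively positive (resp.\ weakly positive) for every standard $\inv$-stable $M$, and by Remark~\ref{remark: proj M1} --- which says $M$-relative positivity depends only on the $M_1$-projection --- the same holds for $\rho_0^G-2w(\rho_0^H)$ itself.

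It then remains to add the two pieces: the sum of an $M$-relatively positive element and an $M$-relatively weakly positive element of $\aaa_0^*$ is again $M$-relatively positive, because adding a nonnegative linear combination of the finite set $\Delta^{G/H}(M)$ to a strictly positive one is still strictly positive, i.e.\ $\cone(A_M^+,\Delta^{G/H}(M))+\ccone(A_M^+,\Delta^{G/H}(M))=\cone(A_M^+,\Delta^{G/H}(M))$, the degenerate case $\Delta^{G/H}(M)=\emptyset$ (which forces $M=G$) being trivial. Applying this in the tempered case ($\Re(\chi)$ weakly positive, $\rho_0^G-2w\rho_0^H$ positive) and in the square-integrable case ($\Re(\chi)$ positive, $\rho_0^G-2w\rho_0^H$ weakly positive) yields that $\Re(\chi)+\rho_0^G-2w(\rho_0^H)$ is $M$-relatively positive in both situations. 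By Theorem~\ref{thm: main}, $\pi$ is $H$-integrable; as $\pi$ was an arbitrary irreducible tempered (resp.\ square-integrable) representation, $G/H$ is strongly tempered (resp.\ strongly discrete).

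I do not expect a genuine obstacle here: the whole content is already packaged in the results invoked. The only point needing a little care is the bookkeeping of the various projections --- verifying that the $M_1$-component of $\rho_0^G-2w(\rho_0^H)$ is the test character $\rho^w_{G/H}$, that $M$-relative positivity survives enlarging $M_1$ to a larger $\inv$-stable $M$, and that it survives adding a relatively weakly positive element --- all of which is furnished by Remark~\ref{remark: proj M1} together with the remarks surrounding Corollary~\ref{cor: rel rest M}.
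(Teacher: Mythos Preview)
Your proof is correct and follows essentially the same route as the paper's: the paper simply states that the corollary is a straightforward consequence of Theorems~\ref{thm: main} and~\ref{thm: cass} together with Corollary~\ref{cor: rel rest M} and the observation (made just before the statement) that $M_1$-relative (weak) positivity implies $M$-relative (weak) positivity for every $\inv$-stable standard $M$. You have merely unpacked these references in detail, including the additivity of the cones and the bookkeeping with Remark~\ref{remark: proj M1}; the small technical caveat you raise about $A_G^+$ acting by a unitary (rather than trivial) character is handled exactly as you suggest and is left implicit in the paper.
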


\subsection{Galois symmetric spaces are strongly discrete}\label{sec: galois}

Let $E/F$ be a quadratic field extension. Let $\grp H$ be a connected, reductive $F$-group and $\grp G=\Res_{E/F}(\grp H_E)$ be the restriction of scalars from $E$ to $F$ of the group $\grp H$ considered as an $E$-group. Thus, $G\simeq \grp H(E)$. The Galois involution of $E/F$ defines an involution on $\grp G$ that we denote by $\inv$. We identify $\grp H$ with $\grp G^\inv$ and call $G/H$ a Galois symmetric space. 

Since $\grp H$ is defined over $F$, so are the Lie algebra $\Lie(\grp H)$ and the adjoint action on it. Hence, we have 
\[
\Lie(G)\simeq \Lie(\grp H)(E)=\Lie(H)\otimes_F E
\]
and the action of $h\in H$ is given as $\Ad(h)(v\otimes e)=\Ad(h)v \otimes e$, $v\in \Lie(H)$, $e\in  E$. It follows, that any eigenvalue of $\Ad(A_0^+)$ on $\Lie(G)$ is also an eigenvalue on $\Lie(H)$ and therefore $\roots^{G/H}=\roots^H$. In particular, $W^{G/H}=W^H$.

Since standard parabolic subgroups of $H$ are in bijection with subsets of $\Delta^H$, $\inv$-stable, standard parabolic subgroups of $G$ are in bijection with subsets of $\Delta^{G/H}$ (see Remark \ref{rmk: std st par}) and $\Delta^H=\Delta^{G/H}$ the map $\grp P\mapsto \grp P^\inv$ is a bijection between $\inv$-stable, standard parabolic $F$-subgroups of $\grp G$ and standard parabolic $F$-subgroups of $\grp H$ with inverse $\grp Q\mapsto \Res_{E/F}(\grp Q_E)$.
In particular, we have 
\[
P_1^\inv=P_0^H.
\]

The following follows from the proof of \cite[Lemma 2.5.1]{MR2010737} \footnote{the lemma is formulated in the global setting but the proof is the same in the $p$-adic case.}.
\begin{lemma}\label{lem: mod}
Let $\grp P$ be a $\inv$-stable, stanadrd parabolic $F$-subgroup of $\grp G$. Then $\delta_P^{1/2}|_{P^\inv}=\delta_{P^\inv}$. 
\end{lemma}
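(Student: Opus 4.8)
The statement to prove is Lemma \ref{lem: mod}: for a $\inv$-stable standard parabolic $\grp P$ of $\grp G=\Res_{E/F}(\grp H_E)$ in a Galois symmetric space, $\delta_P^{1/2}|_{P^\inv}=\delta_{P^\inv}$.

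\medskip

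The plan is to compute both modulus characters directly from the adjoint action on Lie algebras of unipotent radicals, using the fact that for a Galois symmetric space the whole situation is obtained from the $F$-group $\grp H$ by base change to $E$. Concretely, write $\grp P = \grp M\ltimes \grp U$ for the standard Levi decomposition; by the discussion preceding the lemma, $\grp P^\inv = \grp Q$ is a standard parabolic of $\grp H$ with Levi decomposition $\grp Q = \grp N \ltimes \grp V$ where $\grp N = \grp M^\inv$ and $\grp V = \grp U^\inv$, and moreover $\grp P = \Res_{E/F}(\grp Q_E)$, so $\grp M = \Res_{E/F}(\grp N_E)$ and $\grp U = \Res_{E/F}(\grp V_E)$. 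Recall $\delta_P(p) = \abs{\det(\Ad(p)|_{\Lie(U)})}_F$ for $p\in P$, and similarly $\delta_Q(q) = \abs{\det(\Ad(q)|_{\Lie(V)})}_F$ for $q\in Q=P^\inv$.

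\medskip

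The key computation: for $q\in Q = H\cap M$, we have $\Lie(U) = \Lie(V)\otimes_F E$ with $\Ad(q)$ acting as $\Ad(q)\otimes \mathrm{id}_E$. Hence, viewing $\Lie(U)$ as an $F$-vector space of dimension $2\dim_F \Lie(V)$, the $F$-linear map $\Ad(q)|_{\Lie(U)}$ is two copies of $\Ad(q)|_{\Lie(V)}$ (choose an $F$-basis $1, \tau$ of $E$); therefore $\det_F(\Ad(q)|_{\Lie(U)}) = \det_F(\Ad(q)|_{\Lie(V)})^2$. Taking absolute values, $\delta_P(q) = \abs{\det_F(\Ad(q)|_{\Lie(V)})}_F^2 = \delta_Q(q)^2$, so $\delta_P^{1/2}(q) = \delta_Q(q) = \delta_{P^\inv}(q)$, which is exactly the claim. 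I would present this cleanly by noting that $\Ad(q)|_{\Lie(U)}$ and $\Ad(q)|_{\Lie(V)}\oplus \Ad(q)|_{\Lie(V)}$ are conjugate as $F$-linear operators, so have the same determinant.

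\medskip

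The main subtlety — and the only place care is needed — is bookkeeping about which Levi decomposition is used and the identification $\Lie(U)\simeq \Lie(V)\otimes_F E$ as $\Ad(Q)$-modules over $F$. This rests on the already-established facts that $\grp P = \Res_{E/F}(\grp Q_E)$ with $\grp Q = \grp P^\inv$, that $\grp U = \Res_{E/F}(\grp V_E)$ with $\grp V$ the unipotent radical of $\grp Q$ (unipotent radicals are compatible with restriction of scalars and with taking $\inv$-fixed points, since $\grp U$ is $\inv$-stable for a $\inv$-stable parabolic), and the earlier displayed isomorphism $\Lie(G)\simeq \Lie(H)\otimes_F E$ with $\Ad(h)(v\otimes e) = \Ad(h)v\otimes e$ for $h\in H$, which restricts to the corresponding statement on $\Lie(U)$. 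Granting these, the determinant identity is immediate and the lemma follows; alternatively, since the paper attributes this to the proof of \cite[Lemma 2.5.1]{MR2010737}, one may simply invoke that computation, which is precisely the $\det$-over-$F$ versus $\det$-over-$E$ comparison just described.
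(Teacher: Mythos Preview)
Your argument is correct and is precisely the computation that the cited reference carries out; the paper itself gives no proof beyond the citation, so you have supplied the missing details. One small slip: you write ``for $q\in Q = H\cap M$'', but by your own setup $Q=\grp P^\inv=H\cap P$, not $H\cap M$ (the latter is $N$); the computation is valid for any $q\in Q$ since $Q$ normalizes $V$, and in any case both modulus characters are trivial on $V$, so checking on $N=M^\inv$ would also suffice.
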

It follows that $(\rho_{M_1}^G)_\inv^+=2\rho_0^H$ and hence $\rho_{G/H}^{e}=0$ where $e$ is the identity in $W^{G/H}$. Hence, the following is immediate from Corollary \ref{prop: rel chars}.

\begin{corollary}\label{cor: str dis}
Every Galois symmetric space $G/H$ is strongly discrete.
\end{corollary}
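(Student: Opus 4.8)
The statement to prove is Corollary \ref{cor: str dis}: every Galois symmetric space $G/H$ is strongly discrete. The plan is as follows.

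The key observation is that by Corollary \ref{prop: rel chars} it suffices to show that all relative test characters $\rho^w_{G/H}$ are $M_1$-relatively weakly positive. The crucial structural simplification in the Galois case, established just above, is that $\roots^{G/H}=\roots^H$ and consequently $W^{G/H}=W^H$. This means the coset space $W^{G/H}/W^H$ is trivial, so the only class to consider is the identity $e$, and there is exactly one relative test character $\rho^e_{G/H}=(\rho_{M_1}^G)_\inv^+-2\rho_0^H$ to analyze.

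So the plan reduces to showing $\rho^e_{G/H}=0$, which is trivially weakly positive (it lies in the closure of any cone). First I would invoke Lemma \ref{lem: mod}: applied to the $\inv$-stable parabolic $P_1$ with $P_1^\inv=P_0^H$, it gives $\delta_{P_1}^{1/2}|_{P_0^H}=\delta_{P_0^H}$. Taking $\Re$ and restricting appropriately, and using that $A_0^+\subseteq M_1$ so that $(\aaa_{M_1}^*)_\inv^+=(\aaa_0^*)_\inv^+$ (Remark \ref{remark: proj M1}), I get that the projection of $\rho_{M_1}^G=\Re(\delta_{P_1}^{1/2})_{M_1}$ to $(\aaa_0^*)_\inv^+$ equals $2\Re(\delta_{P_0^H}^{1/2})=2\rho_0^H$. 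Here I should be careful about the factor of two: the embedding of $X^*(A_0^+)$ into $(\aaa_0^*)_\inv^+$ identifies a character of $A_0^+$ with $\frac12(\alpha+\inv(\alpha))$ (cf. the proof of Lemma \ref{lem: pos rest}), but since $\delta_{P_1}$ and $\delta_{P_1}^{1/2}$ are already $\inv$-invariant on $M_1$ their restriction matches the projection directly. Thus $(\rho_{M_1}^G)_\inv^+=2\rho_0^H$, so $\rho^e_{G/H}=0$.

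The main (very mild) obstacle is simply checking the normalization in Lemma \ref{lem: mod} and making sure the identification of $(\aaa_0^*)_\inv^+$ with $X^*(A_0^+)\otimes\R$ is applied consistently, so that ``$\delta_{P_1}^{1/2}|_{P_0^H}=\delta_{P_0^H}$'' translates cleanly to the desired equality of elements of $(\aaa_0^*)_\inv^+$ rather than off by a factor. Once $\rho^e_{G/H}=0$ is established, the conclusion is immediate: zero is $M_1$-relatively weakly positive, so by Corollary \ref{prop: rel chars} the space $G/H$ is strongly discrete. I would write the proof in two sentences, essentially as it appears: deduce $(\rho_{M_1}^G)_\inv^+=2\rho_0^H$ from Lemma \ref{lem: mod}, hence $\rho_{G/H}^e=0$, and apply Corollary \ref{prop: rel chars}.
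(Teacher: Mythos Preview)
Your proposal is correct and follows essentially the same approach as the paper: from $W^{G/H}=W^H$ the only test character is $\rho^e_{G/H}$, and Lemma~\ref{lem: mod} applied to $P_1$ with $P_1^\inv=P_0^H$ gives $(\rho_{M_1}^G)_\inv^+=2\rho_0^H$, so $\rho^e_{G/H}=0$ and Corollary~\ref{prop: rel chars} concludes. The paper records this in one line just before stating the corollary.
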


We can also state the precise criterion inferred from an application of Theorem \ref{thm: main} to the Galois case.

\begin{theorem}\label{thm: Gal}
Let $G/H$ be a Galois symmetric space and let $\pi$ be an admissible representation of $G/A_G^+$. Then $\pi$ is $H$-integrable if and only if for any $\inv$-stable parabolic subgroup $P=M\ltimes U$ of $G$ and any $\chi\in \Exp_P(\pi)$, the element $\Re(\chi)$ is $M$-relatively positive. 

\end{theorem}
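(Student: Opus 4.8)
The plan is to obtain Theorem \ref{thm: Gal} as the specialization of Theorem \ref{thm: main} to the Galois case, using the structural facts collected above in this subsection together with Lemma \ref{lem: mod}. Since the equivalence in Theorem \ref{thm: main} is already an ``if and only if'', all that is required is to simplify the positivity condition occurring there when $G/H$ is a Galois symmetric space.

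First I would use that in the Galois case $\roots^{G/H}=\roots^H$, so that $W^{G/H}=W^H$ and the coset space $W^{G/H}/W^H$ is trivial. Hence the distinguished set of representatives $[W^{G/H}/W^H]$ of Corollary \ref{cor: cone}\eqref{part: cone deco} consists of a single element, the identity $e$ of $W^{G/H}$. Feeding this into Theorem \ref{thm: main}, the criterion becomes: $\pi$ is $H$-integrable if and only if for every $\inv$-stable, standard parabolic subgroup $P=M\ltimes U$ of $G$ and every $\chi\in\Exp_P(\pi)$ the element $\Re(\chi)+\rho_0^G-2\rho_0^H$ is $M$-relatively positive.

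It then remains to discard the summand $\rho_0^G-2\rho_0^H$. By Remark \ref{remark: proj M1}, for an $\inv$-stable standard Levi subgroup $M$ the property of being $M$-relatively positive depends only on the image of the element under the projection $\aaa_0^*\to(\aaa_{M_1}^*)_\inv^+=(\aaa_0^*)_\inv^+$. The image of $\rho_0^G-2\rho_0^H$ under this projection is exactly $\rho_{G/H}^{e}=(\rho_{M_1}^G)_\inv^+-2\rho_0^H$, and this vanishes: applying Lemma \ref{lem: mod} to the minimal $\inv$-stable parabolic $P_1$ (for which $P_1^\inv=P_0^H$) gives $\delta_{P_1}^{1/2}|_{P_1^\inv}=\delta_{P_0^H}$, hence $(\rho_{M_1}^G)_\inv^+=2\rho_0^H$, as already noted just before the statement. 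Therefore $\Re(\chi)+\rho_0^G-2\rho_0^H$ and $\Re(\chi)$ have the same image in $(\aaa_{M_1}^*)_\inv^+$, so one is $M$-relatively positive precisely when the other is, and the stated criterion follows.

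The argument is short precisely because all the substance resides in Theorem \ref{thm: main} and in the preliminary analysis of Galois symmetric spaces, so there is no genuine obstacle; the only point deserving a word is the harmless discrepancy between Theorem \ref{thm: main}, phrased for $\inv$-stable \emph{standard} parabolic subgroups, and the present statement, phrased for $\inv$-stable parabolic subgroups, which is a routine reduction along the lines already used elsewhere in the paper.
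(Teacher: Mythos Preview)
Your proposal is correct and matches the paper's approach: the paper does not even give a separate proof of Theorem \ref{thm: Gal}, stating it as ``the precise criterion inferred from an application of Theorem \ref{thm: main} to the Galois case'' after having established $\roots^{G/H}=\roots^H$, $W^{G/H}=W^H$, and $\rho_{G/H}^{e}=0$ via Lemma \ref{lem: mod}. Your observation about the ``standard'' qualifier is apt; the paper's own statement drops it without comment.
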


Assume now in addition that $A_0=A_0^+$. Then by \eqref{lem: all rts rest} $\roots^G=\roots^{G/H}=\roots^H$ and in particular $\Delta^G=\Delta^H$.
Thus, standard parabolic subgroups of $G$ are all $\inv$-stable and in particular $P_0=P_1$.
In paricular, for any standard parabolic subgroup $P=M\ltimes U$ of $G$ we have $A_M=A_M^+$ and $\Delta_M=\Delta^{G/H}(M)$.
The following is therefore immediate from Theorems \ref{thm: Gal} and \ref{thm: cass}.
\begin{corollary}
Assume that $G/H$ is a Galois symmetric space and $A_0=A_0^+$. Let $\pi$ be an admissible representation of $G/A_G$. Then $\pi$ is $H$-integrable if and only if $\pi$ is square-integrable.
\end{corollary}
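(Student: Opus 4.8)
The plan is to derive this corollary as a direct consequence of Theorem~\ref{thm: Gal} together with Casselman's criterion, Theorem~\ref{thm: cass}, once we observe that under the hypothesis $A_0=A_0^+$ the notion of $M$-relative positivity collapses onto ordinary positivity. First I would record the structural consequences of $A_0=A_0^+$: by \eqref{lem: all rts rest} we have $\roots^{G/H}=\{\alpha|_{A_0^+}:\alpha\in\roots^G\}\setminus\{0\}=\roots^G$, hence $\Delta^{G/H}=\Delta^G$, and in fact $\roots^H=\roots^G$ and $\Delta^H=\Delta^G$ as well since $\inv$ acts trivially on $X^*(A_0)=X^*(A_0^+)$. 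Consequently $\dnull=\emptyset$, so by Lemma~\ref{lem: inv st std par} every standard parabolic $P_I$ is $\inv$-stable, and $P_0=P_{\dnull}=P_1$.

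Next I would unwind the relevant projections. Since $\aaa_0^*=(\aaa_0^*)_\inv^+$, for any standard (hence $\inv$-stable) parabolic $P=M\ltimes U$ we have $A_M=A_M^+$, so $(\aaa_M^*)_\inv^-=0$ and the decomposition $\aaa_0^*=(\aaa_0^M)^*\oplus(\aaa_M^*)_\inv^+\oplus(\aaa_M^*)_\inv^-$ reduces to the ordinary $\aaa_0^*=(\aaa_0^M)^*\oplus\aaa_M^*$. Thus for $\lambda\in\aaa_0^*$ the projection $(\lambda_M)_\inv^+$ appearing in Definition~\ref{def: M pos} is simply the usual projection $\lambda_M\in\aaa_M^*$. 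Moreover $\Delta^{G/H}(M)=\{\alpha|_{A_M^+}:\alpha\in\Delta^G\}\setminus\{0\}=\{\alpha|_{A_M}:\alpha\in\Delta^G\}\setminus\{0\}=\Delta_M$, so the cone $\cone(A_M^+,\Delta^{G/H}(M))$ of $M$-relatively positive elements is exactly $\cone(A_M,\Delta_M)$. Therefore: for $\chi\in\Exp_P(\pi)$, the element $\Re(\chi)$ is $M$-relatively positive if and only if $\Re(\chi)\in\cone(A_M,\Delta_M)$.

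Finally I would combine these observations. By Theorem~\ref{thm: Gal}, $\pi$ is $H$-integrable if and only if for every $\inv$-stable standard parabolic $P=M\ltimes U$ of $G$ and every $\chi\in\Exp_P(\pi)$, the element $\Re(\chi)$ is $M$-relatively positive. Since every standard parabolic is $\inv$-stable and $M$-relative positivity of $\Re(\chi)$ means precisely $\Re(\chi)\in\cone(A_M,\Delta_M)$, this condition is identical to the condition in Casselman's criterion (Theorem~\ref{thm: cass}) characterizing square-integrability of $\pi$ (recall $\pi$ is a representation of $G/A_G$, so the centre acts by a unitary character and the hypothesis of Theorem~\ref{thm: cass} is met). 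Hence $\pi$ is $H$-integrable if and only if $\pi$ is square-integrable.

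There is no real obstacle here; the content is entirely in matching up definitions, and the only point requiring a moment's care is verifying that the eigenspace decompositions degenerate correctly --- i.e.\ that $A_0=A_0^+$ forces $(\aaa_M^*)_\inv^-=0$ for every standard Levi $M$, which follows because $A_M\subseteq A_0=A_0^+\subseteq H$ makes $A_M$ pointwise $\inv$-fixed, hence $A_M=A_M^+$.
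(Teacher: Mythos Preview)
Your proposal is correct and follows essentially the same approach as the paper: both arguments observe that $A_0=A_0^+$ forces every standard parabolic to be $\inv$-stable with $A_M=A_M^+$ and $\Delta^{G/H}(M)=\Delta_M$, so that $M$-relative positivity coincides with ordinary positivity, and then conclude by combining Theorem~\ref{thm: Gal} with Casselman's criterion (Theorem~\ref{thm: cass}). Your write-up is in fact more detailed than the paper's, which simply records $A_M=A_M^+$, $\Delta_M=\Delta^{G/H}(M)$ and declares the result immediate; the one minor quibble is that your justification ``since $\inv$ acts trivially on $X^*(A_0)$'' for $\roots^H=\roots^G$ is not quite the right reason---that equality comes from the Galois-case identity $\roots^{G/H}=\roots^H$ established earlier---but this point is inessential to the argument.
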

 
\begin{remark}
The condition $A_0^+=A_0$ is automatically satisfied if $\grp H$ is $F$-split. Indeed,
in this case $\grp A_0^+$ is a maximal torus of $\grp H$. Therefore, the torus $A_0$ in $G\simeq \grp H(E)$ cannot be of higher rank.
\end{remark}

Many of the examples we consider are associated to a quadratic extension of $F$. Fix for the rest of this work a quadratic extension $E/F$ with Galois involution $\sigma$ and an element $\tau\in E$ such that $\sigma(\tau)=-\tau$.

\subsection{The symmetric space $\grp{GL}_n(F)/\grp{O}_J(F)$ is strongly tempered}\label{sec: orthog}
Let $\grp{G} = \grp{GL}_n$. 
Every symmetric matrix $J\in \GL_n$ defines an $F$-involution $\inv(g) = J\,{}^tg^{-1}J^{-1}$ on $\grp G$. Denote the associated orthogonal group by $\grp{O}_J=\grp G^\inv=\grp{H}$.

After $G$-conjugation if necessary, we may assume without loss of generality (see e.g.~\cite[\S 15.3.10]{MR2458469}) that $J$ is of the form
\[
\begin{pmatrix} & & w_r \\ & J_0 & \\ w_r & & \end{pmatrix}
\]
%\[
%\left( \begin{array}{ccccccc} & & & & & & 1 \\ & & & & & \iddots & \\ & & & & 1 & & \\ & & & J_0 & & & \\ 
% & & 1 & & & & \\ & \iddots & & & & & \\ 1 & & & & & & \end{array} \right)\;,
%\]
where $J_0\in \GL_{n-2r}$ defines an anisotropic quadratric form ($r$ is the Witt index of $J$) and $w_r\in \GL_r$ is the permutation matrix $(w_r)_{i,j}=\delta_{i,r+1-j}$.\footnote{ By the classification of quadratic forms over a p-adic field $n-2r\leq 4$ and the number of possible orthogonal groups in $G$, up to conjugation, is bounded by $2  \abs{F^\times/(F^\times)^2}$. See \cite[\S 63C]{MR0067163}. }
We may and do further assume that $J_0$ is diagonal. 

We choose the torus of diagonal matrices in $G$ to be the $\inv$-stable maximal $F$-split torus $A_0$. We Write $\epsilon_i\in \aaa_0^*$ for the character of $A_0$ that takes a diagonal matrix to its $i$-th entry and identify $\aaa_0^*\simeq\R^n$ by identifying $\{\epsilon_1,\dots,\epsilon_n\}$ with the standard basis of $\R^n$. Note that
\[
A_0^+=\{\diag (a_1,\ldots,a_r,1,\ldots,1,a_r^{-1},\ldots, a_1^{-1}): \quad a_i\in F^*,\ i=1,\dots,r\}.
\]
We write 
\[
\eta_i = \epsilon_i|_{A_0^+}\in (\aaa_0)_\inv^+.
\]

Let $\grp{P}_0$ be the Borel subgroup of upper triangular matrices in $\grp G$. For a decomposition $n_1+\ldots + n_k= n$ let $\grp{P}_{(n_1,\ldots,n_k)}=\grp{M}_{(n_1,\ldots,n_k)}\ltimes \grp{U}_{(n_1,\ldots,n_k)}$ be the associated standard parabolic subgroups of $\grp{G}$ with its standard Levi decomposition, where the Levi subgroup $\grp{M}_{(n_1,\ldots,n_k)}$ is isomorphic to $\grp{GL}_{n_1}\times\cdots\times\grp{GL}_{n_k}$.

Then $\grp{P}_1= \grp{P}_{(1,\ldots,1, 2n-r,1,\ldots,1)}= \grp{M}_1 \ltimes \grp{U}_1$ is a standard, minimal $\inv$-stable parabolic $F$-subgroup of $\grp{G}$. The intersection $\grp{P^H_0} = \grp{P}_1\cap \grp{H}^\circ$ is a minimal parabolic $F$-subgroup of $\grp{H}^\circ$. The root system 
\begin{equation}\label{eq: roots bcr}
\roots^{G/H}=\begin{cases} \{\pm(\eta_i\pm \eta_j):1\le i\ne j \le r\}\cup \{\pm\eta_i,\,\pm 2\eta_i:i=1,\dots,r\} & 2r<n \\ \{\pm(\eta_i\pm \eta_j):1\le i\ne j \le r\}\cup \{\pm 2\eta_i:i=1,\dots,r\} & 2r=n\end{cases}
\end{equation}
is of type $BC_r$ when $2r<n$ and of type $C_r$ when $2r=n$. We have
\begin{equation}\label{eq: simple bcr}
\Delta^{G/H} =\begin{cases} \{\eta_i-\eta_{i+1}\}_{i=1}^{r-1}\cup\{ \eta_r\}& 2r<n \\ \{\eta_i-\eta_{i+1}\}_{i=1}^{r-1}\cup\{ 2\eta_r\} & 2r=n.\end{cases}
\end{equation}
We write $E_{i,j}\subset \Lie(G)= \gl_n(F)$ for the one-dimensional subspace of matrices vanishing outside the $(i,j)$-th entry. These are the weight spaces for the roots in $\roots^G$. For integers $a\le b$ let $[a,b]=\{a,a+1,\dots,b\}$ be the corresponding interval of integers. Note that the action of $\inv$ on $\gl_n(F)$ (given by $\inv(X) = -J\,{}^tXJ^{-1}$) satisfies $\inv(E_{i,j})= E_{n+1-j,\,n+1-i}$ whenever $i,\,j\in [1,r]\cup [n+1-r,n]$ and $\inv(E_{i,j}) = E_{j,n+1-i}$ for $1\leq i\leq r$ and $r<j\leq n-r$. It easily follows that for $\alpha\in \roots^{G/H}\setminus \{2\eta_1,\dots,2\eta_r\}$ and every $\beta\in \roots^G$ such that $\beta|_{A_0^+}=\alpha$ we have $\inv(\beta)\ne \beta$.
Thus, by Lemma \ref{lem: multip}\eqref{part: trace vanish}, $m_{\inv,\alpha}=0$. Furthermore, $\inv$ acts by $-1$ on $L_{2\eta_i}^G=E_{i,\,n+1-i}$ and therefore $m_{\inv,2\eta_i}=-1$.
%we have
%\[
%m_{\inv,\alpha}=\left\{ \begin{array}{ll} -1 & \alpha = 2\eta_i\,,\; 1\leq i\leq r \\ 0 & \mbox{else} \end{array}\right. , \quad \alpha \in \roots^{G/H}.
%\]

In case $n=2r$ ($H$ is an $F$-split orthogonal group), the root system $\roots^H$ is of type $D_r$, $\Delta^H= \{\eta_i-\eta_{i+1}\}_{i=1}^{r-1}\cup\{ \eta_{r-1}+\eta_r\}$ and $W^H$ is an index $2$ subgroup of $W^{G/H}$. It is easy to check that $[W^{G/H} / W^H] = \{e,\epsilon\}$, where $\epsilon$ is the simple reflection associated with the root $2\eta_r\in \Delta^{G/H}$ and $e$ is the identity. It is straightforward that $m_{\inv,\epsilon^{-1}(\alpha)} = m_{\inv,\alpha}$ for all $\alpha \in \roots^{G/H}$. It therefore follows from  Proposition \ref{prop: rel char as sum} that $\rho^{\epsilon}_{G/H}=\rho^{e}_{G/H}$.

Otherwise, when $2r<n$, $\roots^H$ is of type $B_r$, $\Delta^H= \{\eta_i-\eta_{i+1}\}_{i=1}^{r-1}\cup\{ \eta_r\}$ and $W^H = W^{G/H}$.

In all cases, combining this with Proposition \ref{prop: rel char as sum} the relative test characters are given by 
\begin{equation}\label{eq: is pos}
\rho^{w}_{G/H} = \sum_{i=1}^r \eta_i= \sum_{j=1}^{r-1} j\cdot (\eta_j-\eta_{j+1}) + r\cdot \eta_r, \ \ \ w\in [W^{G/H} / W^H].
\end{equation}
This is $M_1$-relatively positive by the second equality. Thus, from Corollary \ref{prop: rel chars} we deduce the following.

\begin{corollary}\label{cor: gl_n-o_n}
The symmetric space $\GL_n/ \O_J$ is strongly tempered for every symmetric matrix $J\in \GL_n$. 
\end{corollary}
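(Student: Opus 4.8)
The plan is to deduce the corollary from Corollary~\ref{prop: rel chars}: that statement reduces strong temperedness of $\GL_n/\O_J$ to the single assertion that each relative test character $\rho^w_{G/H}$, $w\in[W^{G/H}/W^H]$, is $M_1$-relatively positive, and by Corollary~\ref{cor: rel rest M} it is enough to exhibit $\rho^w_{G/H}$ as a positive linear combination of the simple roots in $\Delta^{G/H}$. To get a handle on $\rho^w_{G/H}$ I would invoke Proposition~\ref{prop: rel char as sum}, which expresses it as $-\frac12\sum_{\alpha\in\roots^{G/H,\pos}}m_{\inv,w^{-1}(\alpha)}\,\alpha$; so everything comes down to computing the integers $m_{\inv,\alpha}=\Tr(\inv|_{L^G_\alpha})$ for the restricted roots $\alpha$, together with a little Weyl-group bookkeeping.

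For the computation I would first fix coordinates: take $A_0$ to be the diagonal torus, normalize $J$ to its Witt form so that $A_0^+$ is the standard $r$-dimensional split torus sitting in the hyperbolic corners, and identify $\roots^{G/H}$ with the root system of type $BC_r$ (when $2r<n$) or $C_r$ (when $2r=n$), with simple system $\Delta^{G/H}$ as in \eqref{eq: simple bcr}. The weight spaces $L^G_\alpha$ for $\alpha\in\roots^{G/H}$ are direct sums of matrix units $E_{i,j}\subset\gl_n(F)$, and the involution $\inv(X)=-J\,{}^tXJ^{-1}$ sends $E_{i,j}$ to $E_{n+1-j,\,n+1-i}$ on the hyperbolic indices and to $E_{j,\,n+1-i}$ on the mixed indices coming from the anisotropic kernel. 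The crucial point is that for every $\alpha\in\roots^{G/H,\pos}$ other than the $2\eta_i$, and every $\beta\in\roots^G$ restricting to $\alpha$, one has $\inv(\beta)\ne\beta$, so Lemma~\ref{lem: multip}\eqref{part: trace vanish} gives $m_{\inv,\alpha}=0$; whereas for $\alpha=2\eta_i$ the space $L^G_{2\eta_i}=E_{i,\,n+1-i}$ is a line on which $\inv$ acts by $-1$, so $m_{\inv,2\eta_i}=-1$ (consistently with Lemma~\ref{lem: multip}\eqref{part: trace}).

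It remains to sort out $[W^{G/H}/W^H]$ and assemble the answer. When $2r<n$ the group $H$ is of type $B_r$ and $W^H=W^{G/H}$, so $[W^{G/H}/W^H]=\{e\}$ and there is nothing to do. When $2r=n$ the group $H$ is split of type $D_r$, $W^H$ has index $2$ in $W^{G/H}$, and one checks $[W^{G/H}/W^H]=\{e,\epsilon\}$ with $\epsilon$ the reflection in $2\eta_r$; since $\epsilon$ only flips the sign of $\eta_r$ it permutes $\{\pm2\eta_i\}$ and preserves the values of $m_{\inv,\cdot}$ there, whence $\rho^\epsilon_{G/H}=\rho^e_{G/H}$. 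Feeding the traces into Proposition~\ref{prop: rel char as sum} then yields, in all cases, $\rho^w_{G/H}=-\frac12\sum_{i=1}^r(-1)\cdot2\eta_i=\sum_{i=1}^r\eta_i=\sum_{j=1}^{r-1}j\,(\eta_j-\eta_{j+1})+r\,\eta_r$, the desired positive combination of $\Delta^{G/H}$ (using $r\,\eta_r=\frac r2\cdot2\eta_r$ when $2r=n$), so Corollary~\ref{prop: rel chars} finishes the proof. I expect the main obstacle to be the step of pinning down the $\inv$-action on the matrix units against the combinatorics of which $\epsilon_i\pm\epsilon_j$ restrict to which $\eta$-root — in particular handling the middle anisotropic block when $2r<n$ carefully enough to confirm that the pairing $\beta\leftrightarrow\inv(\beta)$ is genuinely fixed-point-free away from the $2\eta_i$; the rest is bookkeeping.
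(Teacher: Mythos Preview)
Your proposal is correct and follows essentially the same route as the paper: you set up the same coordinates, compute the traces $m_{\inv,\alpha}$ via Lemma~\ref{lem: multip} exactly as the paper does, handle the two-element coset set in the $2r=n$ case by the same observation that $\epsilon$ preserves the relevant traces, and arrive at the same formula \eqref{eq: is pos} for $\rho^w_{G/H}$ before invoking Corollary~\ref{prop: rel chars}. The only cosmetic slip is the citation of Corollary~\ref{cor: rel rest M} for the reduction to a positive combination in $\Delta^{G/H}$; that reduction is really just the definition of $M_1$-relative positivity together with $(\aaa_{M_1}^*)^+_\inv=(\aaa_0^*)^+_\inv$ (Remark~\ref{remark: proj M1}) and Lemma~\ref{lem: rest rel rts to M}, but this does not affect the argument.
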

\subsection{The symmetric space $\grp{U}_{J,E/F}(F)/\grp{O}_J(F)$ is strongly tempered}
We provide another family of strongly tempered symmetric spaces. The computation of relative test characters in the case at hand reduces to that of the previous subsection.
We therefore maintain all the notation defined in the previous subsection and use different letters to denote the symmetric space we now consider.

Recall that $E=F[\tau]/F$ is a quadratic extension with Galois involution $\sigma$.
We consider the following embedding of $\grp{O}_J$ as a the group of fixed points of an involution on the unitary group associated with $J$ and $E/F$.

Let $\grp{G'}= \Res_{E/F}(\grp{G}_E)$ (see Section \ref{sec: galois}) and consider $\sigma$ as the Galois involution on $\grp G'$. Note that the involution $\inv$ on $\grp G=(\grp G')^\sigma$ extends to an involution on $\grp G'$ by the same formula $\inv(g)=J \,{}^t g^{-1} J^{-1}$, $g\in \grp G'$ and that $\sigma$ and $\inv$ commute.
Let $\inv'=\inv\sigma=\sigma\inv$ and 
$\grp U=\grp{U}_{J,E/F}=(\grp G')^{\inv'}$ be the associated unitary group.

Note that $\sigma$ restricts to an involution on $\grp U$ and $\grp U^\sigma=\grp{O}_J=\grp H$. We consider now the symmetric space $U/H$.
%We introduce two commuting involutions on $\grp{G'}$. One is the Galois involution $\sigma$, for which $\grp{G'}^\sigma = \grp{G}$. Another is given by $\inv(g) =J(\sigma(g)^t)^{-1}J^{-1}$. Then, $\grp{U}:=\grp{U}_J = \grp{G'}^\theta$ is a unitary group. We write $U = \grp{U}(F)$.
%Since $\inv$ and $\sigma$ commute, $\grp{U}$ is $\sigma$-stable. It turns out that $\grp{U}^\sigma= \grp{U}\cap \grp{G}$, when viewed as a subgroup of $\grp{G}$, is precisely the orthogonal group $\grp{H}= \grp{O}_J$ from the previous discussion.

From this explicit construction it is easy to see that there exists a $\sigma$-stable, maximal $F$-split torus $A_0^U$ of $U$ such that $A_0^+$ is the maximal $F$-split torus in $(A_0^U)^\sigma$. Furthermore, 
\[
\grp{P_1^U} = \Res_{E/F}(\grp{P}_{1,\ldots,1, 2n-r,1,\ldots,1}) \cap \grp{U} 
\]
is a minimal $\sigma$-stable parabolic $F$-subgroup of $\grp{U}$ such that $\grp{P_1^U} \cap \grp H^\circ=\grp{P_0^H}$. 

%Let $\tau\in E$ be such that $\sigma(\tau)=-\tau$.
We consider $\Lie(U)$ as the $\inv'$-fixed subspace of $\Lie(G') \simeq \gl_n(E) = \gl_n(F) + \tau\cdot \gl_n(F)$. 
Thus,
\[
\Lie(U)=\{X+\tau Y:X,\,Y\in \gl_n(F),\,X=-J \,{}^tXJ^{-1},\,Y=J \,{}^tYJ^{-1}\}.
\]

By studying the adjoint action of $A_0^+$ on $\Lie(U)$ we observe that $\roots^{U/H} = \roots^{G/H}$ (where on both sides we view elements as characters on $A_0^+$) and $\Delta^{U/H} = \Delta^{G/H}$. Hence also $\roots^{U/H,\pos} = \roots^{G/H,\pos}$.
Furthermore, for every $\alpha\in \roots^{U/H,\pos} \setminus \{2\eta_1,\dots,2\eta_r\}$ there is a subspace $V_\alpha\subseteq \gl_n(F)$ (explicated bellow) so that 
$L_\alpha^U=L_\alpha^{U,+}\oplus L_\alpha^{U,-}$ where 
\[
L_\alpha^{U,+}=\{v+\inv'(v): v\in V_\alpha\} \text{ and }L_\alpha^{U,-}=\{v+\inv'(v): v\in \tau V_\alpha\}.
\]
For all such $\alpha$ we have $\dim L_\alpha^{U,+}=\dim L_\alpha^{U,-}$ and clearly $\sigma$ acts by $\pm1$ on $L_\alpha^{U,\pm}$ respectively. Therefore $m_{\sigma,\alpha}=0=m_{\inv,\alpha}$. Also $L_{2\eta_i}=\tau E_{i,n+1-i}$ is one dimensional and clearly $m_{\sigma,2\eta_i}=-1=m_{\inv,2\eta_i}$ for $i=1,\dots,r$.
%It can be seen that the root spaces in $\Lie(U)$ of each of the roots of $\roots^{U/H}$ are all spanned by subspaces of $\gl_n(E)$ either of the form \Omer{elaborate}
%\[V_{i,j}:= \{v+\inv(v)\;:\; v\in E_{i,j} + \tau E_{i,j}\}\quad 1\leq j<i\leq n,\; (i,j)\not\in [r+1, n-r]\times [r+1, n-r]
%\]
%or of the form
%\[
%W_i:= \tau E_{i,i}\quad i\in [1,r]\cup [n-r+1,n].
%\]
%Since $\Tr(\sigma|_{V_{i,j}})=0$ for any such space $V_{i,j}$ and $\sigma|_{W_i}=-1$ on any $W_i$, it is easy to check by Lemma \ref{lem: multip} that 

It follows that $m_{\sigma,\alpha}=m_{\inv, \alpha}$ for all $\alpha \in \roots^{U/H}=\roots^{G/H}$. This allows us to argue verbatim as in Corollary \ref{cor: gl_n-o_n} to deduce the following.

\begin{corollary}
Let $E/F$ be a quadratic extension and $J\in \GL_n$ a symmetric matrix. Then the symmetric space $\U_{J,E/F}/\O_J$ is strongly tempered. 
\end{corollary}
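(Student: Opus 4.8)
The plan is to apply Corollary \ref{prop: rel chars} with the ambient group taken to be $U=\U_{J,E/F}$, the involution to be $\sigma|_U$, and the symmetric subgroup to be $H=\O_J$: it then suffices to show that the relative test characters $\rho^w_{U/H}$ are $M_1$-relatively positive for every $w\in[W^{U/H}/W^H]$, where $M_1$ now denotes the minimal $\sigma$-stable Levi of $U$. The tool for this is Proposition \ref{prop: rel char as sum}, which expresses
\[
\rho^w_{U/H}=-\frac12\sum_{\alpha\in\roots^{U/H,\pos}}m_{\sigma,w^{-1}(\alpha)}\,\alpha
\]
purely in terms of the descendent root system $\roots^{U/H}$, the coset representatives $[W^{U/H}/W^H]$ (which depend only on $\roots^{U/H}\supseteq\roots^H$), and the trace multiplicities $m_{\sigma,\alpha}=\Tr(\sigma|_{L_\alpha^U})$.

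Each of these pieces of data has already been determined in the discussion above: $\roots^{U/H}=\roots^{G/H}$ (of type $BC_r$ when $2r<n$, of type $C_r$ when $2r=n$) and $\Delta^{U/H}=\Delta^{G/H}$, while $\roots^H$ is the same type-$B_r$ (resp.\ type-$D_r$) system as in Subsection \ref{sec: orthog}; hence the Weyl groups $W^{U/H}$, $W^H$ and the representative set $[W^{U/H}/W^H]$ literally coincide with their $\GL_n/\O_J$ analogues. Moreover, the explicit decomposition $L_\alpha^U=L_\alpha^{U,+}\oplus L_\alpha^{U,-}$ recorded above gives $m_{\sigma,\alpha}=0$ for $\alpha\in\roots^{U/H,\pos}\setminus\{2\eta_1,\dots,2\eta_r\}$ and $m_{\sigma,2\eta_i}=-1$, i.e.\ $m_{\sigma,\alpha}=m_{\inv,\alpha}$ for every $\alpha\in\roots^{U/H}$. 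Since Proposition \ref{prop: rel char as sum} uses no further information about $U$ or $\sigma$, the computation \eqref{eq: is pos} transfers verbatim and yields
\[
\rho^w_{U/H}=\sum_{i=1}^r\eta_i=\sum_{j=1}^{r-1}j\,(\eta_j-\eta_{j+1})+r\,\eta_r,\qquad w\in[W^{U/H}/W^H].
\]
By the second equality this is a positive linear combination of the simple roots in $\Delta^{U/H}$, hence $M_1$-relatively positive, exactly as in Subsection \ref{sec: orthog}; Corollary \ref{prop: rel chars} then gives that $\U_{J,E/F}/\O_J$ is strongly tempered.

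The substantive work is not in the assembly above but in the inputs to it that are already in place before the statement: identifying the root data of the two symmetric spaces and, above all, verifying the multiplicity identity $m_{\sigma,\alpha}=m_{\inv,\alpha}$ by disentangling the two commuting involutions $\sigma$ and $\inv'=\inv\sigma$ acting on $\Lie(G')=\gl_n(E)=\gl_n(F)\oplus\tau\,\gl_n(F)$. The only genuinely delicate point in the present step is to recognize that Proposition \ref{prop: rel char as sum} may legitimately be used as a black box: its statement refers solely to $\roots^{G/H}$, $[W^{G/H}/W^H]$ and the numbers $m_{\inv,\cdot}$, so once these agree for $\U_{J,E/F}/\O_J$ and $\GL_n/\O_J$ the value of $\rho^w_{U/H}$, and hence strong temperedness, is forced.
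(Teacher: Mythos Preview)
Your proposal is correct and follows essentially the same approach as the paper: the paper's own proof is the single clause ``This allows us to argue verbatim as in Corollary~\ref{cor: gl_n-o_n}'', and you have spelled out precisely what that means---namely that $\roots^{U/H}=\roots^{G/H}$, $\roots^H$ is literally the same, hence $[W^{U/H}/W^H]=[W^{G/H}/W^H]$, and $m_{\sigma,\alpha}=m_{\inv,\alpha}$ for all $\alpha$, so Proposition~\ref{prop: rel char as sum} yields the identical expression~\eqref{eq: is pos}, which is $M_1$-relatively positive. Your closing remark that the real content lies in the preparatory identification of root data and the verification of $m_{\sigma,\alpha}=m_{\inv,\alpha}$ is also exactly right.
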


For the sake of completeness, we provide here the above mentioned spaces $V_\alpha$ that complete the reduction of our computation to that of the previous subsection.
For $1\le i< j\le r$ we have
\[
V_{\eta_i-\eta_j}=E_{i,j}
\ \ \ \text{and}\ \ \  
V_{\eta_i+\eta_j}=E_{i,n+1-j}
\]
whereas if $2r<n$ for $i=1,\dots,r$ we have
\[
V_{\eta_i}=\mathop{\oplus}\limits_{j=r+1}^{n-r} E_{i,j}.
\]

\subsection{The symmetric space $\grp{GL}_{2n}(F)/\grp{GL}_n(E)$ is strongly discrete}\label{sec: more pairs}
Let $\grp{G} = \grp{GL}_{2n}$ and $\nu=\tau^2\in F$. Define the involution $\inv(g)=tgt^{-1}$ on $\grp G$ where
\[
t=\diag \left( \left(\begin{array}{cc} 0 & \nu^{-1} \\ 1 & 0 \end{array}\right),\ldots, \left(\begin{array}{cc} 0 & \nu^{-1} \\ 1 & 0 \end{array}\right) \right).
\]
Note that $H=G^\inv \simeq \grp{GL}_n(E)$. We can choose $A_0$ to be the diagonal torus in $G$. It is $\inv$-stable and
\[
A_0^+=\{
\left\{ \diag (a_1,a_1, a_2, a_2, \ldots, a_n,a_n)\;:\; a_i\in F^*,\,i=1,\dots,n\right\}.
\]
We can take $P_1=P_{(2,\ldots,2)}$ to be the minimal $\inv$-stable parabolic subgroup of $G$ so that $P_0^H=P_1\cap H$ is a minimal parabolic subgroup of $H=H^\circ$. 

We then have $\roots^{G/H} = \roots^H$ and $W^{G/H}=W^H$. For each $\alpha \in \roots^{G/H}$ there are four roots in $\roots^G$ such that $\beta|_{A_0^+} = \alpha$. The involution $\inv$ does not fix any of the four. Thus, by Lemma \ref{lem: multip}\eqref{part: trace vanish}, $m_{\inv, \alpha}=0$ for all $\alpha\in \roots^{G/H,\pos}$. In particular, the relative test character $\rho^{e}_{G/H}=0$. From Corollary \ref{prop: rel chars} we have the following.

\begin{corollary}
The symmetric space $\grp{GL}_{2n}(F)/ \grp{GL}_n(E)$ is strongly discrete.
\end{corollary}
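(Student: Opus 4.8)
The plan is to invoke Corollary \ref{prop: rel chars}: it is enough to check that the relative test character $\rho^{e}_{G/H}$ vanishes, since $0$ lies in $\ccone(A_{M_1}^+,\Delta^{G/H}(M_1))$ and is therefore $M_1$-relatively weakly positive. (We will also see along the way that $[W^{G/H}/W^H]=\{e\}$, so $\rho^{e}_{G/H}$ is the only test character to consider.)

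First I would fix block coordinates, indexing the rows and columns of $\GL_{2n}$ by pairs $(k,a)$ with $1\le k\le n$ and $a\in\{1,2\}$, so that the $k$-th $2\times 2$ block of $t$ occupies the indices $(k,1),(k,2)$. A short matrix computation shows that $\inv=\Ad(t)$ sends the matrix unit $E_{(k,a),(l,b)}\in\gl_{2n}(F)$ to a nonzero scalar multiple of $E_{(k,3-a),(l,3-b)}$; in other words $\inv$ simultaneously interchanges the two rows of block $k$ and the two columns of block $l$. From this one reads off that $A_0^+=\{\diag(a_1,a_1,\dots,a_n,a_n)\}$, that the simple roots of $G$ killed by restriction to $A_0^+$ are exactly $\dnull=\{\epsilon_{(k,1)}-\epsilon_{(k,2)}:1\le k\le n\}$, so that $P_1=P_{(2,\dots,2)}$ by Lemma \ref{lem: inv st std par}, and --- writing $\eta_k$ for the common restriction of $\epsilon_{(k,1)}$ and $\epsilon_{(k,2)}$ to $A_0^+$ --- that $\roots^{G/H}=\{\eta_k-\eta_l:1\le k\ne l\le n\}$ is of type $A_{n-1}$.

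The heart of the argument is the vanishing of all multiplicities $m_{\inv,\alpha}$. Fix $\alpha=\eta_k-\eta_l$ in $\roots^{G/H}$; the roots of $\roots^G$ restricting to $\alpha$ are the four characters $\epsilon_{(k,a)}-\epsilon_{(l,b)}$ with $a,b\in\{1,2\}$, and by the description of $\inv$ above it permutes them according to $(a,b)\mapsto(3-a,3-b)$, which has no fixed point. Hence no root of $\roots^G$ above $\alpha$ is $\inv$-fixed, and Lemma \ref{lem: multip}\eqref{part: trace vanish} gives $m_{\inv,\alpha}=0$. The same free involution on the four-dimensional space $L_\alpha^G$ has a two-dimensional fixed subspace, so $L_\alpha^H\ne0$; thus $\roots^H=\roots^{G/H}$, $W^{G/H}=W^H$, and $[W^{G/H}/W^H]=\{e\}$. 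Feeding this into Proposition \ref{prop: rel char as sum} yields
\[
\rho^{e}_{G/H}=-\frac12\sum_{\alpha\in\roots^{G/H,\pos}}m_{\inv,\alpha}\,\alpha=0,
\]
and Corollary \ref{prop: rel chars} completes the proof.

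The one step that requires genuine care --- and the likeliest place for a sign or index slip --- is the explicit computation of $\Ad(t)$ on matrix units in the block coordinates, and in particular the claim that the induced permutation of the four roots lying above each $\alpha\in\roots^{G/H}$ is fixed-point-free. Everything downstream of that is a purely formal application of the results of Section \ref{Hint}, exactly parallel to the treatment of $\GL_n/\O_J$ in Corollary \ref{cor: gl_n-o_n}.
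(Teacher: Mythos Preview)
Your proof is correct and follows essentially the same route as the paper's: identify the four roots of $\roots^G$ lying above each $\alpha\in\roots^{G/H}$, observe that $\inv$ permutes them without fixed points, apply Lemma~\ref{lem: multip}\eqref{part: trace vanish} to get $m_{\inv,\alpha}=0$, conclude $\rho^e_{G/H}=0$ via Proposition~\ref{prop: rel char as sum}, and finish with Corollary~\ref{prop: rel chars}. Your block-coordinate description of the $\inv$-action and the explicit fixed-point-free permutation $(a,b)\mapsto(3-a,3-b)$ are a bit more detailed than the paper's one-line assertion, but the argument is the same.
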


\subsection{The symmetric space $\grp{Sp}_{2n}(F)/\grp{U}_{J,E/F}(F)$ is strongly tempered}
To describe an explicit realization of the symmetric space that we consider next it is convenient to maintain the notation of the previous subsection. 
For a symmetric matrix $J\in \GL_n$, we can embed the corresponding unitary group $\grp{U}_{J, E/F}$ in $\grp{Sp}_{2n}$ as follows. 
To $J= (a_{ij})$ we associate the anti-symmetric matrix $A_J\in \GL_{2n}$ whose 
whose $(i,j)$-th $2\times 2$ block is given by 
\[
\left(\begin{array}{cc} 0 & a_{ij} \\ -a_{ij} & 0 \end{array}\right).
\]
Let $\sigma(g)= A_J\,{}^tg^{-1}A_J^{-1}$ be the involution on $\grp G$ so that $\grp G^\sigma=\grp{Sp}_{A_J}\simeq\grp{Sp}_{2n}$. Note that the involutions $\sigma$ and $\inv$ commute, hence $\inv$ restricts to an involution on $\grp{Sp}_{A_J}$ and $\grp{Sp}_{A_J}^\inv\simeq \grp{U}_{J, E/F}$.
The group $\grp{U}_{w_n, E/F}$ is quasi-split over $F$. It is well known that if $n$ is odd then every unitary group is $\grp{GL}_n(E)$-conjugate to $\grp{U}_{w_n, E/F}$.
If $n$ is even then there are two conjugacy classes of non-isomorphic unitary groups determined by the norm class of the discriminant. We consider the two cases as follows.

Let $\grp G_1'=\grp{Sp}_{A_{w_n}}\simeq \grp{Sp}_{2n}$ and $\grp U_1=(\grp G_1')^\inv\simeq \grp U_{w_n,E/F}$. If $n$ is even let $\delta\in F^*$ be such that $\delta\det w_{n-2}\det w_n$ is not a norm from $E$ to $F$ and let 
\[
J_2=\begin{pmatrix} & & w_{n/2-1} \\ & d & \\ w_{n/2-1} & & \end{pmatrix}
\]
where $d=\diag(1,\delta)$. Set $\grp G_2'=\grp{Sp}_{A_{J_2}}\simeq \grp{Sp}_{2n}$ and $\grp U_2=(\grp G_2')^\inv\simeq \grp U_{J,E/F}$ the non-quasi-split unitary group.
%Let us write
%\[
%w_r = \left(\begin{array}{ccc} & & 1 \\ & \iddots \\ 1 \end{array}\right)\in \grp{M}_r(F).
%\] 
%It is known (see e.g. \Max{cite Minguez}) that for a given $n$, up to an isomorphism, there are at most two unitary groups. The group $\grp{U}_1:=\grp{U}_{w_n, E/F}$ is quasi-split over $F$. We embed it in $\grp{G}'_1\simeq \grp{Sp}_{2n}$ as described above, using $\Psi(w_n)$ as the antisymmetric matrix that defines $\grp{G}'_1$. When $n$ is odd, all unitary groups are isomorphic to $\grp{U}_1$. For an even $n$, the second isomorphism class is given as the non-$F$-quasi-split group $\grp{U}_2:=\grp{U}_{J', E/F}$, where
%\[
%J' = \left(\begin{array}{cccc} & & & w_{\frac{n}{2}-1} \\ & e_1 & & \\ & & e_2 & \\  w_{\frac{n}{2}-1} \end{array}\right),
%\]
%for some fixed $e_1,e_2\in F^\times$ so that $e_1e_2\det w_{n-2}\det w_n$ is not a norm from $E$ to $F$. We embed $\grp{U}_2$ inside $\grp{G}'_2\simeq \grp{Sp}_{2n}$ as above, with $\Psi(J')$ defining $\grp{G}'_2$. We will write $\grp{U}<\grp{G}'$ when referring to uniform constructions on both the quasi-split and the non-quasi-split case.

In order to unify notation for the two cases at hand we set $J=w_n$ (resp.~$J=J_2$) and $\grp G'=\grp G'_1$ (resp.~$\grp G'=\grp G'_2$) and let $\grp U=(\grp G')^\inv$ be the corresponding unitary group.
We can choose the minimal $\inv$-stable parabolic subgroup $\grp P_1'$ of $\grp G'$ to be
\[
\grp P_1'=\begin{cases}  \grp P_{(2^{(n)})}\cap \grp G' & J=w_n \\  \grp P_{(2^{(n/2-1)},4,2^{(n/2-1)})}\cap \grp G' & J=J_2\end{cases}
\]
where 
$
(2^{(a)})=(\overbrace{2,\dots,2}\limits^a).
$
It contains a $\inv$-stable maximal $F$-split torus $\grp A'_0$ of $\grp G'$, such that $(\grp A'_0)^+$ is the maximal $F$-split torus of $\grp U$ such that 
\[
(A'_0)^+ = \{\diag ( a_1, a_1,\ldots, a_r,a_r,I_{2n-4r}, a_r^{-1}, a_r^{-1},\ldots,a_1^{-1},a_1^{-1})\;:\; a_i\in F^*,\,i=1,\dots,r\},
\]
where $r= \lfloor n/2 \rfloor$ in the quasi-split case, and $r=n/2-1$ in the non-quasi-split case. 
%The minimal $\theta$-stable parabolic in $G'$, whose intersection in $U$ gives a minimal parabolic, can be chosen to be $P_1 = G'_1\cap P_{2,\ldots,2}$ or $P_1 = G'_2\cap P_{2,\ldots, 2,4,2,\ldots,2}$, according to the case.
For our computation we recall that 
\[
\Lie(G')=\{X\in\gl_{2n}(F): -A_J \,{}^t X A_J^{-1}=X\}.
\]
The root system $\roots^{G'/U}$ is of the same type as in the example of subection \ref{sec: orthog}. Namely, $\roots^{G'/U}$ is of type $BC_r$ when $2r<n$
%, with $\Delta^U_{G'} = \{\eta_i-\eta_{i+1}\}_{i=1}^{r-1}\cup\{\eta_r\}$, or 
and of type $C_r$ when $2r=n$.
We may therefore denote the roots as in \eqref{eq: roots bcr} where 
$\eta_i$ is the character of $(A_0')^+$ that satisfies 
\[
\eta_i(\diag ( a_1, a_1,\ldots, a_r,a_r,I_{2n-4r}, a_r^{-1}, a_r^{-1},\ldots,a_1^{-1},a_1^{-1}))=a_i, \ \ \ i=1,\dots,r.
\] 
%and $\epsilon_i$ is the character of the diagonal torus of $\grp{G}$ as in Section \ref{sec: orthog}. 
The simple roots $\Delta^{G'/U}$ are then given by \eqref{eq: simple bcr}.
Unlike in subsection \ref{sec: orthog}, we now have $\roots^U= \roots^{G'/U}$ and therefore $W^U= W^{G'/U}$ in all cases.

It is now a straightforward verification that for any $\alpha\in \roots^{G'/U}\setminus \{2\eta_1,\dots,2\eta_r\}$ there are four roots $\beta$ in $\roots^{G'}$ such that $\beta|_{(A'_0)^+} = \alpha$ and the involution $\inv$ fixes none of them. It follows from Lemma \ref{lem: multip}\eqref{part: trace vanish} that $m_{\inv, \alpha}=0$ for all $\alpha\in \roots^{G'/U}\setminus \{2\eta_1,\dots,2\eta_r\}$.

For $k=1,\dots,r$ the root space $L_{2\eta_k}^{G'}$ consists of matrices $X\in \Lie(G')$ such that the $(i,j)$-th $2\times 2$ block of $X$ is zero unless $i=k=n+1-j$ in which case it is of the form $\sm{a}{b}{c}{-a}$ for some $a,\,b,\,c\in F$. We denote such an element by $X_{a,b,c}$. Then 
\[
\inv(X_{a,b,c})=X_{-a,\nu^{-1}c,\nu b}
\]
%Each root $2\eta_i\in \roots^U_{G'}$, $1\leq i\leq r$, has a $3$-dimensional weight space in $\Lie(G')$. Inside $\gl_{2n}(F)$, it is given as the $(i,n+1-i)$-th $2\times 2$ block of matrices of the form 
%\left(\begin{array}{cc} a & b \\ c & -a \end{array}\right),\quad \mbox{with }\quad \inv \left(\begin{array}{cc} a & b \\ c & -a \end{array}\right) = \left(\begin{array}{cc} -a & \nu^{-1}c \\ \nu b & a \end{array}\right).
%\]
%It is easy to see that the trace of the action of $\inv$ on each such weight space equals $-1$. Hence, 
%\[
%m_{\inv,\alpha}=\left\{ \begin{array}{ll} -1 & \alpha = 2\eta_i\,,\; 1\leq i\leq r \\ 0 & \mbox{else} \end{array}\right. , \quad \alpha \in \roots^U_{G'}.
%\]
and therefore $m_{\inv,2\eta_k}=-1$.
It now follows from Proposition \ref{prop: rel char as sum} that the relative test character is given by 
\[
\rho^e_{G'/U} = \sum_{i=1}^r \eta_i
\]
which is $M_1'$-relatively positive ($M_1'$ is the Levi subgroup of $P_1'$ containing $A'_0$) by the second equality in \eqref{eq: is pos}. Thus, from Corollary \ref{prop: rel chars} we deduce the following.

\begin{corollary}\label{cor: sp/u}
For every symmetric matrix $J\in \grp{GL}_n(F)$ the symmetric space $\Sp_{A_J}/ \U_{J, E/F}$ is strongly tempered. 
\end{corollary}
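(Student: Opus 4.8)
The plan is to derive the corollary from Corollary \ref{prop: rel chars}: it suffices to prove that the relative test character $\rho^w_{G'/U}$ is $M_1'$-relatively positive for every $w\in[W^{G'/U}/W^U]$. As recorded above, one has $\roots^U=\roots^{G'/U}$ in every realization, so $W^U=W^{G'/U}$ and $[W^{G'/U}/W^U]$ reduces to the identity coset; thus only the single character $\rho^e_{G'/U}$ needs to be examined. By Proposition \ref{prop: rel char as sum} this reduces to computing the numbers $m_{\inv,\alpha}=\Tr(\inv|_{L^{G'}_\alpha})$ as $\alpha$ runs over the positive roots of the descendent system $\roots^{G'/U}$, which by \eqref{eq: roots bcr} is of type $BC_r$ when $2r<n$ and of type $C_r$ when $2r=n$.

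I would organize the multiplicity computation into two families. For $\alpha\in\roots^{G'/U,\pos}$ not of the form $2\eta_k$, one checks, using the description $\Lie(G')=\{X\in\gl_{2n}(F):-A_J\,{}^tX A_J^{-1}=X\}$ together with the fact that $\inv$ is conjugation by $t$, that $\inv$ fixes none of the roots $\beta\in\roots^{G'}$ with $\beta|_{(A_0')^+}=\alpha$; Lemma \ref{lem: multip}\eqref{part: trace vanish} then forces $m_{\inv,\alpha}=0$. For $\alpha=2\eta_k$ the root space $L^{G'}_{2\eta_k}$ is the three-dimensional space of matrices $X_{a,b,c}$ whose only nonzero $2\times2$ block, occupying position $(k,n+1-k)$, equals $\sm{a}{b}{c}{-a}$; the computation $\inv(X_{a,b,c})=X_{-a,\nu^{-1}c,\nu b}$ shows that $\inv$ acts on this space as a linear involution of trace $-1$, so $m_{\inv,2\eta_k}=-1$ for each $k=1,\dots,r$.

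Feeding these values into Proposition \ref{prop: rel char as sum} gives
\[
\rho^e_{G'/U}=-\frac12\sum_{\alpha\in\roots^{G'/U,\pos}}m_{\inv,\alpha}\,\alpha=\sum_{k=1}^r\eta_k .
\]
By the second equality of \eqref{eq: is pos}, $\sum_{i=1}^r\eta_i=\sum_{j=1}^{r-1}j\,(\eta_j-\eta_{j+1})+r\,\eta_r$, which exhibits $\rho^e_{G'/U}$ as a combination of the simple roots listed in \eqref{eq: simple bcr} with strictly positive coefficients; hence $\rho^e_{G'/U}\in\cone((A_0')^+,\Delta^{G'/U})$ and so is $M_1'$-relatively positive. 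Corollary \ref{prop: rel chars} then yields that $\Sp_{A_J}/\U_{J,E/F}$ is strongly tempered, which is the assertion.

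The one step carrying real content is the multiplicity bookkeeping of the second paragraph, and I expect it to be the main obstacle: one must verify that the $\inv$-action on the roots of $\roots^{G'}$ lying over a non-$2\eta_k$ root is fixed-point-free, and pin down the precise linear action of $\inv$ on each $L^{G'}_{2\eta_k}$. This has to be done for both the quasi-split realization $J=w_n$ and the non-quasi-split realization $J=J_2$; but since the $2\eta_k$-root spaces and the fixed-point-free pattern for the remaining roots coincide in the two cases, the realizations merge precisely here and no further distinction is needed downstream.
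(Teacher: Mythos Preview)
Your proposal is correct and follows essentially the same route as the paper: reduce to Corollary~\ref{prop: rel chars} using $W^U=W^{G'/U}$, compute $m_{\inv,\alpha}$ via Lemma~\ref{lem: multip}\eqref{part: trace vanish} for the non-$2\eta_k$ roots and directly on the three-dimensional space $L^{G'}_{2\eta_k}$ for the remaining ones, and then invoke Proposition~\ref{prop: rel char as sum} together with \eqref{eq: is pos}. The paper additionally records that there are exactly four preimages $\beta$ over each non-$2\eta_k$ root, but this count is not needed for the trace-vanishing argument; your observation that both realizations $J=w_n$ and $J=J_2$ behave identically at the multiplicity step matches the paper's treatment.
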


\begin{remark}
The split analogue of this case is the symmetric space $\Sp_{2n}/\GL_n$. It can be verified that it is strongly tempered for $n=1$ and strongly discrete for $n=2$.
However, the relative test characters are not all $M_1$-relatively weakly positive for $n\ge 3$. 
\end{remark}

\subsection{The symmetric spaces $\grp{GL}_{2n}(F)/\grp{GL}_n(F)\times \grp{GL}_n(F)$ and $\grp{GL}_{2n+1}(F)/\grp{GL}_n(F)\times \grp{GL}_{n+1}(F)$ are strongly discrete}
Let $\grp{G} = \grp{GL}_{n_1+n_2}$ and $\inv(g)=tgt^{-1}$, $g\in \grp G$ where $t=\diag( I_{n_1} , -I_{n_2})$. Then, $\grp{H}=\grp G^\inv \simeq \grp{GL}_{n_1}\times \grp{GL}_{n_2}$. 

Let $\grp{P}_1=\grp{P}_0$ be the standard Borel subgroup of upper triangular matrices and $\grp{A}_0$ the diagonal torus in $\grp G$. Then $\grp A_0=\grp A_0^+$ and therefore $\roots^G=\roots^{G/H}$ is of type $A_{n_1+n_2-1}$. For $1\leq i\ne j\leq n_1+n_2$ let $\alpha_{i,j}\in \roots^G$ be the root corresponding to the weight space $E_{i,j}$ defined as in \S \ref{sec: orthog}.
% and $\chi_i := \alpha_{i,i+1}$, for $1\leq i\leq n_1+n_2-1$. 
Then, $\Delta^{G/H} = \Delta^G = \{\beta_1,\ldots, \beta_{n_1+n_2-1}\}$. 
where $\beta_i= \alpha_{i,i+1}$, for $1\leq i\leq n_1+n_2-1$. 
We identify $W^G=W^{G/H}$ with the group $S_{n_1+n_2}$ of permutations on $\{1,\ldots,n_1+n_2\}$ so that $w(\alpha_{i,j}) = \alpha_{w(i),w(j)}$ for all $w\in W^G$. The set $[W^{G/H} / W^H]$ consists of all permutations that satisfy $w(i)<w(j)$ for all $1\leq i<j\leq n_1$ and $n_1+1\leq  i<j \leq n_1+n_2$.

\begin{lemma}\label{lem: lin period pos}
If either $n_2=n_1$ or $n_2=n_1+1$ then $\rho^w_{G/H}$ is $M_1$-relatively weakly positive for every $w\in [W^{G/H} / W^H]$. 
If $n_1=n_2=1$ then $\rho^w_{G/H}$ is $M_1$-relatively positive for every $w\in [W^{G/H} / W^H]$. 
\end{lemma}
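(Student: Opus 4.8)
The plan is to reduce the statement to an explicit computation in the root datum of $\GL_n$, $n=n_1+n_2$, by way of Proposition~\ref{prop: rel char as sum} and Lemma~\ref{lem: multip}. Since $A_0=A_0^+$ in this example we have $M_1=A_0$ and $P_1=P_0$, so that $\roots^{G/H}=\roots^G$, $W^{G/H}=W^G$ and $\Delta^{G/H}(M_1)=\Delta^G$; by Definition~\ref{def: M pos}, the assertion that $\rho^w_{G/H}$ is $M_1$-relatively positive (resp.\ weakly positive) then amounts to saying that $\rho^w_{G/H}$ is a linear combination of the simple roots $\beta_1,\dots,\beta_{n-1}$ with positive (resp.\ nonnegative) coefficients. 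This is meaningful because $\rho^w_{G/H}$ lies in the subspace of $\aaa_0^*$ spanned by $\Delta^G$: indeed $\rho_0^G$ lies there by definition, $\rho_0^H$ lies in the span of $\roots^H\subseteq\roots^G$, and $W^{G/H}$ acts trivially on $\aaa_G^*$ and hence preserves this span.

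First I would record the multiplicities $m_{\inv,\alpha}$. The weight space $L^G_{\alpha_{i,j}}=E_{i,j}$ is one-dimensional, and $\inv$ acts on it by multiplication by $t_i t_j^{-1}\in\{\pm1\}$, where $t=\diag(I_{n_1},-I_{n_2})$; equivalently, by Lemma~\ref{lem: multip}\eqref{part: trace}, $m_{\inv,\alpha_{i,j}}=2M^H_{\alpha_{i,j}}-1$. Either way, $m_{\inv,\alpha_{i,j}}=1$ when $i$ and $j$ lie in the same block ($\{1,\dots,n_1\}$ or $\{n_1+1,\dots,n\}$) and $m_{\inv,\alpha_{i,j}}=-1$ otherwise. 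For $w\in[W^{G/H} / W^H]$ I would set $\eps_w(i):=1$ if $w^{-1}(i)\le n_1$ and $\eps_w(i):=-1$ otherwise; then $m_{\inv,w^{-1}(\alpha_{i,j})}=\eps_w(i)\eps_w(j)$, and $\sum_{i=1}^n\eps_w(i)=n_1-n_2$ since $w$ is a bijection.

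Next, using $\roots^{G/H,\pos}=\{\alpha_{i,j}:1\le i<j\le n\}$, Proposition~\ref{prop: rel char as sum} gives
\[
-2\,\rho^w_{G/H}=\sum_{1\le i<j\le n}\eps_w(i)\eps_w(j)\,(\epsilon_i-\epsilon_j).
\]
Now expanding an arbitrary element $\sum_{i<j}t_{ij}(\epsilon_i-\epsilon_j)$ in the simple-root basis $\beta_m=\epsilon_m-\epsilon_{m+1}$, the coefficient of $\beta_m$ equals the partial sum $\sum_{i\le m<j}t_{ij}$ (a one-line telescoping check). Since here $t_{ij}=\eps_w(i)\eps_w(j)$ factors, this coefficient collapses to $\big(\sum_{i\le m}\eps_w(i)\big)\big(\sum_{j>m}\eps_w(j)\big)=d_m\,(n_1-n_2-d_m)$, where $d_m:=\sum_{i\le m}\eps_w(i)\in\Z$.

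Finally I would conclude by an elementary sign analysis. If $n_1=n_2$, the coefficient of $\beta_m$ in $-2\rho^w_{G/H}$ is $-d_m^2\le0$, so $\rho^w_{G/H}=\sum_m\tfrac12 d_m^2\,\beta_m$ is $M_1$-relatively weakly positive; and when $n_1=n_2=1$ there is a single simple root $\beta_1$ with $d_1=\eps_w(1)=\pm1$, so $\rho^w_{G/H}=\tfrac12\beta_1$ is $M_1$-relatively positive. If $n_2=n_1+1$, the coefficient of $\beta_m$ in $-2\rho^w_{G/H}$ is $d_m(-1-d_m)=-d_m(d_m+1)\le0$, because the product of two consecutive integers is nonnegative; hence $\rho^w_{G/H}=\sum_m\tfrac12 d_m(d_m+1)\,\beta_m$ is $M_1$-relatively weakly positive. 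There is no serious obstacle: the one idea is the factorization $m_{\inv,w^{-1}(\alpha_{i,j})}=\eps_w(i)\eps_w(j)$, which turns the partial sums into the product $d_m(n_1-n_2-d_m)$, after which everything rests on the trivialities $d^2\ge0$ and $d(d+1)\ge0$ for $d\in\Z$; the only point requiring a little care is the verification that $\rho^w_{G/H}$ genuinely lies in the span of $\Delta^G$, so that relative positivity is governed by the signs of these coefficients.
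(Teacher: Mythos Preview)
Your proof is correct and, at the level of setup, follows the same path as the paper: both invoke Proposition~\ref{prop: rel char as sum}, compute $m_{\inv,\alpha_{i,j}}=\pm1$ according to whether $i,j$ lie in the same block, and then study the coefficient of each simple root $\beta_m$ in $\rho^w_{G/H}$.

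The difference is in how that coefficient is handled. The paper introduces the count $d(w,k)=\#\{(i,j):i\le k<j,\ m_{\inv,\alpha_{w^{-1}(i),w^{-1}(j)}}=1\}$, rewrites it via an auxiliary integer $e_w$ (the number of indices in the first block that $w$ sends into $\{1,\dots,k\}$), obtains $a_k^w=\tfrac{k(n-k)}2 - e_w(n_1-e_w)-(k-e_w)(n_2-k+e_w)$, and then proves the required inequality by a convexity argument (with the function $\phi(t)=t(t-n_1)$ when $n_1=n_2$, and a more delicate $\psi$ when $n_2=n_1+1$). Your key observation is that $m_{\inv,w^{-1}(\alpha_{i,j})}$ \emph{factors} as $\eps_w(i)\eps_w(j)$; this collapses the partial sum $\sum_{i\le m<j}$ to a product and yields the closed form $d_m(n_1-n_2-d_m)$ for the coefficient of $\beta_m$ in $-2\rho^w_{G/H}$, from which the sign is immediate via $d^2\ge0$ (for $n_1=n_2$) or $d(d+1)\ge0$ for $d\in\Z$ (for $n_2=n_1+1$). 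Your parameter $d_m$ is related to the paper's $e_w$ by $d_m=2e_w-m$, and one can check the two formulas for $a_m^w$ agree; your route simply bypasses the convexity step. A pleasant side effect is that your argument never uses the explicit monotonicity description of the representatives in $[W^{G/H}/W^H]$: it gives $M_1$-relative weak positivity of $\rho^w_{G/H}$ for \emph{every} $w\in W^G$.
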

\begin{proof}
For every $w\in [W^{G/H} / W^H]$, we write 
\[
\rho^w_{G/H}= a^w_1\beta_1+\ldots + a^w_{n_1+n_2-1} \beta_{n_1+n_2-1}
\] with half-integers $a^w_i$. Then $\rho^w_{G/H}$ is $M_1$-relatively weakly positive if and only if $a^w_k\geq 0 $ for all $1\leq k\leq n_1+n_2-1$. It is $M_1$-relatively positive when the inequalities ar?e strict.

Note that for $1\le i\ne j\le n_1+n_2$ we have
%\[
%\sum_{\alpha\in \roots^{H,\pos}_G} \alpha = \sum_{1\leq i < j \leq n_1+n_2} \alpha_{i,j} = \sum_{k=1}^{n_1+n_2-1} k(n_1+n_2-k) \chi_k \;,
%\]
%while clearly we have
\[
m_{\inv, \alpha_{i,j}} = \begin{cases} 1 & i,j> n_1 \;\mbox{or} \; i,j\leq n_1 \\ -1 & \mbox{otherwise} \end{cases}
\]
 and that $\alpha_{i,j} = \beta_i + \beta_{i+1}+\cdots + \beta_{j-1}$ for all $i<j$.
 Set 
\[
d(w,k) = \#\{ (i,j)\;:\; 1\leq i\leq k <j\leq n_1+n_2,\; m_{\inv, \alpha_{w^{-1}(i),w^{-1}(j)}}=1\}.
\]
By Proposition \ref{prop: rel char as sum} we have 
\begin{multline*}
a_k^w=-\frac12\left[d(w,k)-\#\{ (i,j)\;:\; 1\leq i\leq k <j\leq n_1+n_2,\; m_{\inv, \alpha_{w^{-1}(i),w^{-1}(j)}}=-1\}\right]=
\\-\frac12\left[d(w,k)-(k(n_1+n_2-k)-d(w,k))\right]=
\frac{k(n_1+n_2-k)}2-d(w,k).
\end{multline*}
%$a^w_k\geq 0$ holds, if and only if, $d(w,k) \leq \frac{k(n_1+n_2-k)}2$, where
Note that translating by $w^{-1}$ we get that
\[
d(w,k)= \#\left\{ (i,j)\;:\;\begin{array}{l} 1\leq w(i)\leq k <w(j)\leq n_1+n_2\,, \\ \mbox{either }1\leq i < j \leq n_1 \mbox{ or } n_1+1 \leq i < j \leq n_1+n_2 \end{array} \right\}.
\]
Let 
\[
e_w = \begin{cases}\max\{1\leq i \leq n_1\;:\; w(i)\leq k \} & w(1)\le k \\ 0 & k<w(1).\end{cases}
\] 
Note that $e_w\le k$,
\[
k-e_w=\begin{cases}\max\{1\leq i \leq n_2\;:\; w(n_1+i)\leq k \} & w(n_1+1)\le k \\ 0 & k<w(n_1+1)\end{cases}
\]
and $\{w(i): 1\le i\le e_w\}\cup \{w(n_1+i): 1\le i\le k-e_w\}=\{1,\dots,k\}$.
%Now, recalling the description of permutations in $[W^{G/H} / W^H]$, it is a combinatorial exercise to see that
It follows that
\[
d(w,k) = e_w(n_1-e_w) + (k-e_w)(n_2-(k-e_w)).
\]
Thus, in order to have $a_k^w\ge 0$ we need to show that
\begin{equation}\label{eq: convex}
\frac{k(k-(n_1+n_2))}2 \le e_w(e_w-n_1) + (k-e_w)((k-e_w)-n_2).
\end{equation}

Consider first the case $n_1=n_2$ and let $\phi(t)=t(t-n_1)$, $t\in \R$. It is a convex real function and therefore 
\[
2\phi(k/2)\le \phi(e_w)+\phi(k-e_w)
\]
(this is precisely the inequality \eqref{eq: convex}) and equality holds if and only if $e_w=k-e_w$.
This shows that $a_k^w\ge 0$ in this case. If in addition $n_1=1$ then $k=1$ and $e_w\ne k-e_w$. Thus in this case $a_1^w>0$ and $\rho_{G/H}^w$ is $M_1$-relatively positive.

Assume now that $n_2=n_1+1$. If $e_w=k-e_w$ then \eqref{eq: convex} is always an equality. Assume now that $e_w\ne k-e_w$
 and let $\psi(t)=t^2-t\left( \frac{t-e_w}{k-2e_w} + n_1\right)$, $t\in \R$. Again, it is a real function with non-negative second derivative and therefore 
\[
2\psi(k/2)\le \psi(e_w)+\psi(k-e_w)
\]
which is precisely the inequality \eqref{eq: convex}. The lemma follows.
%and 
%If we consider the real two-variable function 
%\[\phi(n,t) = t\cdot(n-t)\,,\] 
%then the inequality $d(w,k) \leq \frac{k(n_1+n_2-k)}2$ is equivalent to 

%\begin{equation}\label{eq: convex}
%\frac12\left(\phi(n_1,e_w)+ \phi(n_2,k-e_w)\right) \leq  \phi\left(\frac{n_1+n_2}2, \frac{k}2 \right)
%\end{equation}
\end{proof}
The following is now immediate from Lemma \ref{lem: lin period pos} and Corollary \ref{prop: rel chars}.
\begin{corollary}\label{cor: linear}
The symmetric spaces $\GL_{2n}/ \GL_n\times \GL_n$ and $\GL_{2n+1}/ \GL_n\times \GL_{n+1}$ are strongly discrete.
The symmetric space $\GL_2/ \GL_1\times \GL_1$ is strongly tempered.
\end{corollary}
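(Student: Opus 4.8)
The plan is to obtain the corollary as a direct application of Lemma \ref{lem: lin period pos} combined with the sufficient condition for strong temperedness and strong discreteness recorded in Corollary \ref{prop: rel chars}. All three symmetric spaces sit inside the setup of the present subsection, with $\grp G = \grp{GL}_{n_1+n_2}$ and $\grp H = \grp{GL}_{n_1}\times \grp{GL}_{n_2}$: the space $\GL_{2n}/\GL_n\times\GL_n$ is the case $(n_1,n_2)=(n,n)$, the space $\GL_{2n+1}/\GL_n\times\GL_{n+1}$ is the case $(n_1,n_2)=(n,n+1)$, and $\GL_2/\GL_1\times\GL_1$ is the case $(n_1,n_2)=(1,1)$.

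First I would invoke the first assertion of Lemma \ref{lem: lin period pos}: in the cases $(n_1,n_2)=(n,n)$ and $(n_1,n_2)=(n,n+1)$ every relative test character $\rho^w_{G/H}$, $w\in [W^{G/H}/W^H]$, is $M_1$-relatively weakly positive. By the weakly positive alternative of Corollary \ref{prop: rel chars} this shows that both $\GL_{2n}/\GL_n\times\GL_n$ and $\GL_{2n+1}/\GL_n\times\GL_{n+1}$ are strongly discrete. For the last statement I would instead use the second assertion of Lemma \ref{lem: lin period pos}, valid when $n_1=n_2=1$: there $\rho^w_{G/H}$ is $M_1$-relatively positive for every $w\in [W^{G/H}/W^H]$, so the positive alternative of Corollary \ref{prop: rel chars} yields that $\GL_2/\GL_1\times\GL_1$ is strongly tempered (and, a fortiori, strongly discrete).

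There is no genuine obstacle at this stage, since the content of the corollary has already been packaged into Lemma \ref{lem: lin period pos}, whose proof carries the real work: the convexity estimate \eqref{eq: convex} for the coefficients $a_k^w$ of $\rho^w_{G/H}$ in the basis $\Delta^{G/H}$, together with the reduction, via Proposition \ref{prop: rel char as sum} and the combinatorial identity $d(w,k)=e_w(n_1-e_w)+(k-e_w)(n_2-(k-e_w))$, of $M_1$-relative positivity to nonnegativity (resp. strict positivity) of those coefficients. The only thing to be careful about is the bookkeeping of which pair $(n_1,n_2)$ corresponds to which named symmetric space, and the observation that the sharper \emph{strongly tempered} conclusion for $\GL_2/\GL_1\times\GL_1$ uses the $n_1=n_2=1$ clause of the lemma rather than the general $n_1=n_2$ clause, which alone would give only strong discreteness.
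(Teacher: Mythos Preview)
Your proposal is correct and matches the paper's own proof, which simply states that the corollary is immediate from Lemma \ref{lem: lin period pos} and Corollary \ref{prop: rel chars}. Your additional bookkeeping of which pair $(n_1,n_2)$ corresponds to each space and which clause of the lemma is being used is accurate and only makes the argument more explicit.
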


%\begin{proof}
%Note, that for a fixed $n$, the one variable function $-\phi(n,\cdot)$ is convex. Hence, \eqref{eq: convex} holds for the case $n_1=n_2$.
%It becomes a strict inequality when $e_w\neq k-e_w$ and $n_1=n_2$, which is always the situation for $k=1$. Thus, for $n_1=n_2=1$, the same equivalences of inequalities imply $a^w_1>0$, for all $w\in [W^{G/H} / W^H]$.

%The statements for these cases now follow from Proposition \ref{prop: rel chars}.

%Although $-\phi$ is not a convex function of two variables, \eqref{eq: convex} can still be shown to hold when $n_2=n_1+1$. Let us assume this condition from now. 
%Note, that $\phi$ is linear its first variable. Hence, \eqref{eq: convex} holds when $e_w = k-e_w$. Otherwise, $k-2e_w$ is either negative or $\geq1$.

%We write
%\[
%\psi(t) = t\left( \frac{t}{k-2e_w}- \frac{e_w}{k-2e_w} + n_1\right) -t^2.
%\]
%Then, \eqref{eq: convex} becomes equivalent to $\frac12(\psi(e_w) + \psi(k-e_w)) \leq \psi(k/2)$, which holds when $-\psi$ is convex. Indeed it is, because $\psi$ is a parabola with leading coefficient $(k-2e_w)^{-1}-1\leq 0$. Again, the statement follows from Proposition \ref{prop: rel chars}.
%\end{proof}

\section{Non-vanishing}\label{sect: non-vanish}
For an $H$-integrable representation $\pi$ of $G$ and a vector $\tilde v$ in $\tilde\pi$ let $\ell_{\tilde v,H}$ be the linear form on $\pi$ defined by
\[
\ell_{\tilde v,H}(v)=\int_{H/A_G^+} \mc_{v,\tilde v}(h)\ dh.
\]
We write $\mathcal{L}^\pi_H=\{\ell_{\tilde v,H}:\tilde v\in \tilde\pi\}\subseteq \Hom_H(\pi,\C)$ for the subspace of $H$-invariant linear forms on $\pi$ emerging as integrals of matrix coefficients. 

Let $\grp X=\grp{G/H}$ be the $\grp{G}$-symmetric space associated with $\inv$. In \cite{1203.0039}, $\grp X$ is called strongly tempered if $G/H_z$ is strongly tempered (in the sense of Definition \ref{def: strongly}), for every $z\in X$ where $H_z$ is the stabilizer of $z$ in $G$. 
The statement of \cite[Theorem 6.4.1]{1203.0039} assumes that $\grp X$ is strongly tempered, but the proof considers a single $G$-orbit at a time. It therefore implies the following.
\begin{theorem}[Sakellaridis-Venkatesh]\label{thm: sv}
Assume that $\grp G$ is $F$-split and that $G/H$ is strongly tempered. 
If $\pi$ is an irreducible, square-integrable representation of $G$ then 
\[
\mathcal{L}^\pi_H=\Hom_H(\pi,\C).
\] 
If $\pi$ is a representation of $G$ parabolically induced from an irreducible, square-integrable representation of a Levi subgroup of $G$ then we have the implication
\[
\Hom_H(\pi,\C)\ne 0 \ \ \ \Rightarrow \ \ \ \mathcal{L}^\pi_H\ne 0.
\]
\end{theorem}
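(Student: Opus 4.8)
The plan is to deduce the theorem from \cite[Theorem 6.4.1]{1203.0039}, whose proof I would first recast so as to isolate the contribution of the single orbit $G/H$. Sakellaridis and Venkatesh work with a spherical $\grp G$-variety $\grp X$ and extract the $\pi$-isotypic part of $L^2(X)$ by summing local contributions indexed by the $G$-orbits in $X$; the hypothesis that enters when treating the orbit $G/H$ is exactly that $G/H$ is strongly tempered, since that is what makes the matrix-coefficient integrals $\ell_{\tilde v,H}$ absolutely convergent and thereby defines the relative character attached to $\pi$. So the first step is to observe that, taking for $\grp X$ any spherical variety containing $G/H$ as an open orbit, the weaker hypothesis of the present statement is precisely what their argument for that orbit consumes.

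With that in place, the mechanism for an irreducible square-integrable $\pi$ runs as follows. Strong temperedness makes $\tilde v\mapsto \ell_{\tilde v,H}$ a well-defined $G$-map $\tilde\pi\to\Hom_H(\pi,\C)$ and, dually, produces a bilinear relative character on $\pi\times\tilde\pi$. Since $\pi$ is square-integrable, it is $H$-distinguished exactly when it occurs discretely in $L^2(G/H)$, and the orthogonal projector of $L^2(G/H)$ onto its $\pi$-isotypic subspace is represented, through the realization of matrix coefficients inside that $L^2$-space, by a sum over a basis of $\Hom_H(\pi,\C)$ of these relative characters. Comparing the two descriptions of that projector forces the image of $\tilde v\mapsto \ell_{\tilde v,H}$ to exhaust $\Hom_H(\pi,\C)$, i.e. $\mathcal{L}^\pi_H=\Hom_H(\pi,\C)$.

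For the induced case $\pi=\Ind_Q^G\tau$ with $\tau$ irreducible square-integrable on a Levi subgroup, the same relative character is obtained by meromorphic continuation, in the inducing parameter, of the discrete-series computation, and at the unitary (tempered) point it is holomorphic by strong temperedness. If $\Hom_H(\pi,\C)\ne 0$ then $\pi$ enters the continuous spectrum of $L^2(G/H)$ non-trivially, so the corresponding density on that piece of the Plancherel measure --- again expressed through the relative character --- is non-zero; hence $\ell_{\tilde v,H}\ne 0$ for some $\tilde v$, which is the asserted implication. Here one also invokes the geometric lemma for the double cosets $Q\bs G/H$ to relate $\Hom_H(\Ind_Q^G\tau,\C)$ to distinguishedness of $\tau$.

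The step I expect to be the main obstacle is the very first one: making rigorous that the argument of \cite[Theorem 6.4.1]{1203.0039} is genuinely orbit-local, so that strong temperedness of the \emph{other} $G$-orbits of $\grp X$ is nowhere used --- neither in the convergence estimates nor in the identification of spectral supports --- and that the $F$-splitness hypothesis (used there) is the only extra input we must carry over. A secondary point is to confirm that no extraneous wavefront or smoothness assumption on $\grp X$ is implicitly invoked in the portion of their proof that we are extracting.
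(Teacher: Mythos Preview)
Your reduction is exactly what the paper does: the paper does not supply a proof of this theorem at all, but simply records that \cite[Theorem 6.4.1]{1203.0039} is stated under the stronger hypothesis that the whole variety $\grp X$ is strongly tempered, while its proof treats one $G$-orbit at a time, and then quotes the result. So your first paragraph already captures the paper's entire argument, and the ``main obstacle'' you flag at the end is the only thing the paper addresses.

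Everything you write after that --- the sketch of the Sakellaridis--Venkatesh mechanism via projectors onto isotypic components, Plancherel densities, meromorphic continuation, and the geometric lemma --- is extra relative to the paper and is not needed here. If you intend to actually reprove their theorem rather than cite it, be aware that your sketch is somewhat loose: the equivalence ``$\pi$ is $H$-distinguished iff it occurs discretely in $L^2(G/H)$'' and the appeal to the geometric lemma in the induced case are not quite how the argument in \cite{1203.0039} runs, and would need justification in their own right. For the purposes of matching the paper, you should simply cite \cite[Theorem 6.4.1]{1203.0039} together with the orbit-local observation and stop.
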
 
%Under some conditions, states that when $\grp{X}$ is strongly tempered, $\mathcal{L}^\pi_H$ covers the entire space of $H$-invariant forms on $\pi$, for certain representations $\pi$ of $G$. We will now apply this results on the cases studied in Section \ref{sect: special cases}.
The following is therefore an immediate consequence of Theorem \ref{thm: sv} and  Corollaries \ref{cor: gl_n-o_n}, \ref{cor: sp/u} and \ref{cor: linear}.
\begin{corollary}\label{thm: non-vanish}
For the following symmetric spaces $G/H$ and for every irreducible square-integrable representation $\pi$ of $G$ we have
\[
\mathcal{L}^\pi_H=\Hom_H(\pi,\C).
\]
\begin{enumerate}

\item $\GL_n/ \O_J$ for a symmetric matrix $J\in \GL_n$.\label{case: gl/o}
\item $\Sp_{2n}/ \U_{J, E/F}$ for a symmetric matrix $J\in \GL_n$.\label{case: sp/u}
\item $\GL_2/\GL_1\times\GL_1$.\label{case: gl2/torus}
\end{enumerate}
\end{corollary}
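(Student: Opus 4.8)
The plan is to read the statement off directly from Theorem~\ref{thm: sv}, so that the only real work is to verify its two hypotheses --- that $G$ is $F$-split and that $G/H$ is strongly tempered --- in each of the three listed cases, and to line up the notation of those cases with that of the relevant earlier corollaries.

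First I would dispatch the split hypothesis: $\grp{GL}_n$, $\grp{GL}_2$ and $\grp{Sp}_{2n}$ are all $F$-split, so in every case $\grp G$ is split. In case (2) I would note that the group written $\Sp_{2n}$ is, in the notation of Corollary~\ref{cor: sp/u}, the symplectic group $\grp{Sp}_{A_J}$; since all nondegenerate alternating forms over $F$ are equivalent, $\grp{Sp}_{A_J}\simeq\grp{Sp}_{2n}$ as $F$-groups, and this isomorphism carries the pair $(\grp{Sp}_{A_J},\grp{U}_{J,E/F})$ to a conjugate one, so it is harmless.

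Next I would invoke strong temperedness: $\GL_n/\O_J$ is strongly tempered for every symmetric $J$ by Corollary~\ref{cor: gl_n-o_n}, $\Sp_{A_J}/\U_{J,E/F}$ is strongly tempered for every symmetric $J$ by Corollary~\ref{cor: sp/u}, and $\GL_2/\GL_1\times\GL_1$ is strongly tempered by the last sentence of Corollary~\ref{cor: linear}. With $\grp G$ split and $G/H$ strongly tempered, the first assertion of Theorem~\ref{thm: sv} applies to any irreducible, square-integrable representation $\pi$ of $G$ and gives $\mathcal{L}^\pi_H=\Hom_H(\pi,\C)$, which is exactly the claim; note that strong temperedness also guarantees that such a $\pi$ is $H$-integrable, so that $\mathcal{L}^\pi_H$ is defined to begin with.

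Since every ingredient is already in hand, I do not expect a genuine obstacle. The one point deserving a sentence of care is the passage, recorded in the discussion preceding Theorem~\ref{thm: sv}, from \cite[Theorem 6.4.1]{1203.0039} --- phrased there for a strongly tempered variety $\grp{G/H}$ --- to its orbit-by-orbit consequence applicable to the single symmetric space $G/H$; for that I would simply refer to that discussion. Everything else is bookkeeping.
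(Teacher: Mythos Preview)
Your proposal is correct and matches the paper's own argument essentially verbatim: the paper records the corollary as ``an immediate consequence of Theorem~\ref{thm: sv} and Corollaries~\ref{cor: gl_n-o_n}, \ref{cor: sp/u} and \ref{cor: linear}.'' Your additional remarks on the identification $\grp{Sp}_{A_J}\simeq\grp{Sp}_{2n}$ and on $H$-integrability are reasonable elaborations but not required.
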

When $\grp G=\grp{GL}_n$, it follows from Zelevinsky's classification that representations of $G$ parabolically induced from irreducible square-integrable are precisely the irreducible tempered representations of $G$. 
%For $\Sp_{2n}(F)$, every irreducible tempered representation $\pi$ is a direct summand of such an induced representation $I(\sigma)$ (see \cite{MR1296726,MR1242203}) and hence $\MC(\pi)\subseteq \MC(I(\sigma))$. 
We therefore also have the following.
\begin{corollary}
In cases \ref{case: gl/o} and \ref{case: gl2/torus} of Corollary \ref{thm: non-vanish}, for every irreducible tempered representation $\pi$ of $G$ we have
\[
\Hom_H(\pi,\C)\ne 0 \ \ \ \Rightarrow \ \ \ \mathcal{L}^\pi_H\ne 0.
\]
\end{corollary}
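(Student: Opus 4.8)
The plan is to deduce the corollary directly from the second assertion of Theorem \ref{thm: sv}, using that in both cases at hand $G=\GL_n(F)$. First I would check that the hypotheses of Theorem \ref{thm: sv} hold: the group $\GL_n(F)$ is $F$-split, and the symmetric space is strongly tempered --- by Corollary \ref{cor: gl_n-o_n} in case \ref{case: gl/o} and by Corollary \ref{cor: linear} in case \ref{case: gl2/torus}. (It is precisely this appeal to strong temperedness, together with the $\GL$-specific input below, that forces the restriction to cases \ref{case: gl/o} and \ref{case: gl2/torus} of Corollary \ref{thm: non-vanish}, excluding the $\Sp_{2n}$ case \ref{case: sp/u}.)

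Next I would invoke Zelevinsky's classification of the tempered dual of $\GL_n(F)$ --- the statement recalled in the text immediately before the corollary: every irreducible tempered representation $\pi$ of $\GL_n(F)$ is of the form $\Ind_Q^G(\tau_1\otimes\cdots\otimes\tau_k)$, parabolically induced from a standard parabolic $Q=L\ltimes V$ with Levi $L\simeq \GL_{n_1}(F)\times\cdots\times\GL_{n_k}(F)$, $n_1+\cdots+n_k=n$, and each $\tau_i$ an irreducible square-integrable representation of $\GL_{n_i}(F)$; conversely, every such induced representation is irreducible and tempered. The key point is that for $\GL_n$ there is no reducibility phenomenon --- unlike for $\Sp_{2n}$, where tempered representations need not be irreducibly induced from discrete series --- so this is exactly the shape of representation to which the second part of Theorem \ref{thm: sv} applies.

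Finally, with $\pi$ written as $\Ind_Q^G(\tau_1\otimes\cdots\otimes\tau_k)$, i.e. parabolically induced from the irreducible square-integrable representation $\tau_1\otimes\cdots\otimes\tau_k$ of the Levi subgroup $L$ of $G$, I would apply the implication in Theorem \ref{thm: sv}: $\Hom_H(\pi,\C)\ne 0$ forces $\mathcal{L}^\pi_H\ne 0$. This completes the argument.

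Every step here is bookkeeping, so I do not anticipate a genuine obstacle. If anything requires a word of care, it is the observation --- already made in the discussion preceding Theorem \ref{thm: sv} --- that the proof of \cite[Theorem 6.4.1]{1203.0039} operates on one $G$-orbit $G/H$ at a time, so that applying it to the single orbit under consideration (rather than to the full spherical variety $\grp{G/H}$) is legitimate.
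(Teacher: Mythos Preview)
Your proposal is correct and follows essentially the same argument as the paper: both verify the hypotheses of Theorem \ref{thm: sv} (split $G$, strong temperedness via Corollaries \ref{cor: gl_n-o_n} and \ref{cor: linear}), invoke Zelevinsky's classification to realize every irreducible tempered representation of $\GL_n(F)$ as full parabolic induction from discrete series, and then apply the second implication of Theorem \ref{thm: sv}. Your added remarks on why case \ref{case: sp/u} is excluded and on the single-orbit application of \cite[Theorem 6.4.1]{1203.0039} are accurate elaborations but not additional content beyond the paper's own reasoning.
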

\begin{remark}
For $G/H=\GL_2/\GL_1\times \GL_1$, by multiplicity one, this implies that $\mathcal{L}^\pi_H=\Hom_H(\pi,\C)$ for every irreducible tempered representation of $\GL_2$.
\end{remark}

\bibliographystyle{amsalpha}
\bibliography{../Bibfiles/all}

\end{document}